\definecolor{rouge}{rgb}{0.7,0.00,0.00}
\definecolor{vert}{rgb}{0.00,0.5,0.00}
\definecolor{bleu}{rgb}{0.00,0.00,0.8}
\newtheorem{theorem}{Theorem}[section]
\newtheorem*{theorem*}{Theorem}
\newtheorem{lemma}[theorem]{Lemma}
\newtheorem{corollary}[theorem]{Corollary}
\newtheorem{proposition}[theorem]{Proposition}
\newtheorem{condition}{Condition}
\newtheorem{conditionA}{A\kern-0.1mm}
\renewcommand\dots{\hbox to 1em{.\hss.\hss.}}
\theoremstyle{definition}
\numberwithin{equation}{section}
\def\bb#1{\mathbb{#1}}
\def\scr#1{\mathscr{#1}}
\def\geq{\geqslant}
\def\leq{\leqslant}
\newcommand\ee{\varepsilon}
\DeclareMathOperator{\supp}{supp}
\begin{document}

\title[Limit theorems for products of positive random matrices]
{Edgeworth expansion and large deviations for the coefficients of products of positive random matrices}

\author{Hui Xiao$^1$}\footnotetext[1]{Corresponding author: xiaohui@amss.ac.cn}
\author{Ion Grama}
\author{Quansheng Liu}

\curraddr[Xiao, H.]{Academy of Mathematics and Systems Science, Chinese Academy of Sciences, Beijing 100190, China}
\email{xiaohui@amss.ac.cn}
\curraddr[Grama, I.]{Univ Bretagne Sud, CNRS UMR 6205 LMBA, 56017, Vannes, France}
\email{ion.grama@univ-ubs.fr}
\curraddr[Liu, Q.]{Univ Bretagne Sud, CNRS UMR 6205 LMBA, 56017, Vannes, France}
\email{quansheng.liu@univ-ubs.fr}

\begin{abstract}
Consider the matrix products $G_n: = g_n \ldots g_1$, 
where $(g_{n})_{n\geq 1}$ is a sequence of independent and identically distributed positive random $d\times d$ matrices.
Under the optimal third moment condition, we first establish a Berry-Esseen theorem and an Edgeworth expansion 
for the $(i,j)$-th entry $G_n^{i,j}$ of the matrix $G_n$, where $1 \leq i, j \leq d$. 
Utilizing the Edgeworth expansion for $G_n^{i,j}$ under the changed probability measure, 
we then prove precise upper and lower  large deviation asymptotics for the entries $G_n^{i,j}$
subject to an exponential moment assumption.
As applications, we deduce local limit theorems with large deviations for $G_n^{i,j}$
and establish upper and lower large deviations bounds for the spectral radius $\rho(G_n)$ of $G_n$.  
A byproduct of our approach is the local limit theorem for $G_n^{i,j}$ under the optimal second moment condition. 
In the proofs we develop a spectral gap theory for both the norm cocycle and the coefficients, 
which is of independent interest.
\end{abstract}

\date{\today}
\subjclass[2020]{Primary 60F05, 60F10, 60B20; Secondary 60J05}
\keywords{Products of positive random matrices; Berry-Esseen theorem; Edgeworth expansion; 
precise large deviations; local limit theorem; spectral gap.}

\maketitle


\section{Introduction and main results}\label{sec.prelim}

\subsection{Background and objectives}\label{subsec.notations}

For any integer $d \geq 2$, 
the Euclidean space $\bb R^d$ is endowed with the canonical basis $(e_i)_{1\leq i\leq d}$. 
We equip $\bb R^d$ 
with the standard scalar product $\langle u, v \rangle = \sum_{i = 1}^d u_i v_i$ 
and the $L_1$-vector norm $\|v\| = \sum_{i = 1}^d |\langle v, e_i \rangle|$. 
Let $\mathbb{R}^{d}_+ = \{ v \in \bb R^d: \langle v, e_i \rangle \geq 0, \forall 1 \leq i \leq d \}$ be the positive quadrant of $\bb R^d$
and denote $\mathcal{S} = \{v \in \mathbb{R}^{d}_+, \|v\| = 1\}$.
Throughout the paper, the letters $c, c', c''$ represent constants, while $c_{\alpha}, c'_{\alpha}, c''_{\alpha}$ denote constants that depend on their indices. All these constants may vary from one line to another.

Let $\bb G$ be the semigroup of non-negative $d\times d$ matrices. 
Let $(g_{n})_{n\geq 1}$ be a sequence of independent and identically distributed  elements of $\bb G$ 
defined on a probability space $(\Omega,\mathscr{F},\mathbb{P})$, 
with common law $\mu$. 
Denote by $\supp \mu$ the support of the measure $\mu$ and by $\Gamma_{\mu}$ 
the closed semigroup spanned by $\supp \mu$. 
For $g \in \Gamma_{\mu}$, 
we define the matrix norm $\|g\| : = \sum_{i = 1}^d  \sum_{j = 1}^d \langle e_i, g e_j \rangle$ 
and set $N(g) = \max \{ \|g\|, \|g\|^{-1} \}$.

In this paper we study the asymptotic behaviour of the $(i,j)$-th entry $G_n^{i,j} = \langle e_i, G_n e_j \rangle$
of the product matrix $G_{n}:=g_{n}\ldots g_{1},$ 
and more generally of the coefficients $\langle f, G_n v \rangle$ of $G_n$,
where $f, v \in \mathcal S$.  
The study of asymptotic behaviour of the coefficients attracted much attention, 
due to its own theoretical interest and a variety of applications. 
In the pioneering work \cite{FK60}, Furstenberg and Kesten introduced the following condition on the law $\mu$:
\begin{conditionA}\label{Condi-FK} 
The matrix $0$ does not belong to $\Gamma_{\mu}$ and there exists a constant $\varkappa_0 > 1$ such that  
for $\mu$-almost every $g = (g^{i,j})_{1 \leq i, j \leq d} \in \bb G$, 
\begin{align}\label{Inequality-FK60} 
g>0 \quad \mbox{ and } \quad  \frac{\max_{1\leq i, j\leq d}  g^{i,j} }{ \min_{1\leq i,j\leq d} g^{i,j} } \leq  \varkappa_0.
\end{align}
\end{conditionA}   

Condition \eqref{Inequality-FK60} 
has been introduced by Furstenberg and Kesten in the pioneering work \cite{FK60}. 
Notice that under this condition the hypothesis that $0\notin \Gamma_{\mu}$ 
is equivalent to the following hypothesis used in \cite{BDGM14}: 
all elements $g$ of $\Gamma_{\mu}$ are allowable in the sense that every row and every column has at least one strictly positive entry.

Assuming \ref{Condi-FK} and the first moment $\int_{\bb G} \log N(g) \mu(dg) < \infty$, 
Furstenberg and Kesten \cite{FK60} established
the strong law of large numbers for the coefficients: for any $f, v \in \mathcal S$, 
it holds that $\lim_{n \to \infty} \frac{1}{n} \log \langle f, G_n v \rangle = \lambda_{\mu}$, $\bb P\mbox{-a.s.},$
where $\lambda_{\mu} \in \bb R$ is independent of $f$ and $v$, and is called the first Lyapunov exponent of $\mu$. 
The central limit theorem was also established:
assuming \ref{Condi-FK} and $\int_{\bb G} \log^{2} N(g) \mu(dg) < \infty$,
\begin{align} \label{CLT-001}
\frac{ \log \langle f, G_n v \rangle - n \lambda_{\mu} }{ \sqrt{n} } \to \mathcal N(0, \sigma^2) \quad \mbox{in law},
\end{align}
where $\sigma^2 \geq 0$ is a constant (see \eqref{def-sigma-001} for its definition). 
The convergence \eqref{CLT-001} was initially proved in \cite{FK60} under the moment condition of order $2+\delta$
for some $\delta>0$; 
it also holds under the second moment condition, as a special case of 
Theorem 3 of Hennion \cite{Hen97}.
Further extensions of the above results can be found in the work of Kingman \cite{Kin73},  Kesten and Spitzer \cite{KS84}, 
Cohn, Nerman and Peligrad \cite{CNP93}, 
  Hennion and Herv\'e \cite{HH01, HH08}. 
However, important limit theorems such as Berry-Esseen theorem, Edgeworth expansion,
upper and lower large deviations and local limit theorems for $\langle f, G_n v \rangle$ 
 have not yet been established in the literature.   

The goal of the paper is twofold.
Our first objective is to study the convergence rate in the central limit theorem \eqref{CLT-001}
under the optimal moment condition. 
In this direction, 
we prove a Berry-Esseen theorem for the coefficient $\langle f, G_n v \rangle$ under
the third moment condition $\int_{\bb G} \log^3 N(g) \mu(dg) < \infty$. 
Under this same moment assumption, we also establish the first-order Edgeworth expansion
for $\langle f, G_n v \rangle$; see Section \ref{Berry-Esseen theorem and Edgeworth expansion}. 

Our second objective is to determine the convergence rate in the law of large numbers for $\langle f, G_n v \rangle$, 
which is precisely determined by upper and lower large deviation asymptotics.  
In this regard, we establish the exact asymptotic behavior of $\bb P (\log \langle f, G_n v \rangle > nq)$ when $q > \lambda_{\mu}$, and $\bb P (\log \langle f, G_n v \rangle < nq')$ when $q' < \lambda_{\mu}$.
As a secondary outcome of our approach, we also establish a local limit theorem provided the optimal second moment condition is met. 
Specific statements of these results can be found in Theorem \ref{Thm-Petrov-Uppertail} and Eq.\ \eqref{LLT-001} in Theorem \ref{Thm_LLT_03}.

It is worth noting that analogous results for the norm cocycle $\log \|G_n v\|$ have been demonstrated in \cite{XGL19a}. 
However, for the lower large deviations, these findings have only been established for 
$q'$ less than but sufficiently close to $\lambda_{\mu}$.
An enduring open question pertains to the case where $q' < \lambda_{\mu}$ is entirely arbitrary.
In this paper, we successfully tackle this issue by applying our lower large deviation asymptotics 
for coefficients $\langle f, G_n v \rangle$, 
as demonstrated in Theorem \ref{Thm-Petrov-Norm-cocycle}.
Additionally, our large deviation results lead to new findings 
regarding local limit theorems with large deviations for coefficients, 
as well as large deviation bounds for the spectral radius $\rho(G_n)$ of $G_n$, 
as presented in Theorems \ref{Thm_LLT_03} and \ref{Thm_LDSpectPosi}.

For the proofs, we have developed a spectral gap theory for operators related to a new cocycle 
$(g, v) \mapsto \sigma_f(g, x) = \log \frac{\langle f, g v \rangle}{\langle f, v \rangle}$, 
which is associated with the coefficients. 
Additionally, we have extended the spectral gap properties 
to encompass the norm cocycle $\log \|G_n v\|$ for all negative exponents $s < 0$ (allowing $q' < \lambda_{\mu}$ to be arbitrary), 
and we utilize these properties to establish the lower large deviations.

The cocycle $\sigma_f$ has not been explored in previous literature but proves to be highly valuable in addressing certain related problems.
By leveraging the results obtained for the cocycle related to the scalar product, it becomes possible to investigate conditioned integrals  
and local theorems for the coefficients of products involving positive random matrices.
It is worth noting that these problems remain open when dealing with invertible matrices. 
The primary challenge lies in the development of the spectral gap theory for the transfer operator associated with the cocycle $\sigma_f$.

\subsection{Edgeworth expansion and Berry-Esseen theorem} \label{Berry-Esseen theorem and Edgeworth expansion}
The Edgeworth expansion and the Berry-Esseen theorem, for sums of independent and identically distributed real-valued random variables, are classical results in probability theory. 
These can be found, for instance, in Petrov's work \cite{Pet75}.
The results presented in this section extend these classical results to the coefficients of products of positive random matrices.

We start by formulating our Edgeworth expansion. 
We need the following condition:

\begin{conditionA}\label{Condi-FK-weak} 
The matrix $0$ does not belong to $\Gamma_{\mu}$ and 
there exists a constant $\varkappa > 1$ such that for $\mu$-almost every $g = (g^{i,j})_{1 \leq i, j \leq d} \in \bb G$, 
\begin{align}\label{Inequality-FK-weak} 
g>0 \quad \mbox{ and } \quad   \frac{\max_{1\leq i \leq d} g^{i,j} }{ \min_{1\leq i \leq d} g^{i,j} } \leq  \varkappa  \quad \forall  1 \leq j \leq d. 
\end{align}
\end{conditionA}

Condition \ref{Condi-FK-weak} is indeed weaker than \ref{Condi-FK},  
since condition \ref{Condi-FK-weak} says that all the entries of each fixed column of the matrix $g \in \supp \mu$ are comparable, 
while condition \ref{Condi-FK} requires that all the entries of  $g \in \supp \mu$ are comparable.

For $g \in \Gamma_{\mu}$ and $v \in \mathcal{S}$, 
we write $g \!\cdot\! v = \frac{g v}{\|g v\|}$ for the projective action of $g$ on $\mathcal{S}$.
With any starting point $G_0 \cdot v: = v \in \mathcal{S}$, 
the Markov chain $(G_n \cdot v)_{n \geq 0}$ 
has a transition operator $P$ defined as follows: 
for any bounded measurable function $\varphi$ on $\mathcal S$, 
\begin{align}\label{def-transition-oper-P}
P \varphi(v) = \int_{\bb G} \varphi(g \cdot v) \mu(dg). 
\end{align}
Under \ref{Condi-FK-weak}, 
there exists a unique invariant probability measure $\nu$ on $\mathcal{S}$
such that for any bounded measurable function $\varphi$ on $\mathcal S$, 
\begin{align}\label{def-invariant-mea}
(\nu P) \varphi : =  \int_{\mathcal{S}} \int_{\bb G} \varphi(g \cdot v) \mu(dg) \nu(dv) 
= \int_{\mathcal{S}} \varphi(v) \nu(dv) =: \nu(\varphi). 
\end{align}
With this notation, under \ref{Condi-FK-weak},  and $\int_{\bb G} \log N(g) \mu(dg) < \infty$, 
the Lyapunov exponent $\lambda_{\mu}$ can be written as
\begin{align}\label{formu-lambda-mu}
\lambda_{\mu} =  \int_{\mathcal S} \int_{\bb G}  \log \|g v\|  \mu(dg)  \nu(dv),  
\end{align}
see for instance \cite{HH08}. 

We say that $\mu$ is {\em arithmetic}, if there is $t>0$ together with $\theta \in [0, 2\pi)$ 
and a measurable function 
$\varphi: \mathcal S \to \bb R$ such that for all $g \in \Gamma_\mu$ 
and all $v \in \supp\nu$, it holds that
\begin{align*}
\exp(i 2 \pi  t  \log \|gv\| - i \theta + i (\varphi(g \cdot v) - \varphi(v))) = 1.
\end{align*}
In other words, $t \log \|g v\|$ is contained in $\bb Z$ 
up to a shift that may depend on $g$ and $v$ through the function $\varphi$. 
We need the following non-arithmeticity condition on $\mu$. 
\begin{conditionA}\label{Condi-NonLattice}
The measure $\mu$ is not arithmetic. 
\end{conditionA}

Condition \ref{Condi-NonLattice} is indeed an extension of the non-lattice 
condition typically applied to sums of independent random variables.
According to \cite[Lemma 2.7]{BS2016}, condition \ref{Condi-NonLattice} implies that $\sigma > 0$, 
which is crucial for the central limit theorem and related probabilistic results to hold. 
Furthermore, condition \ref{Condi-NonLattice} is satisfied if the set $\{ \log \rho(g) : g \in \Gamma_{\mu} \}$ generates a dense additive subgroup of $\mathbb{R}$, where $\rho(g)$ represents the spectral radius of the matrix $g$, defined as $\rho(g) = \lim_{k \to \infty} \|g^k\|^{1/k}$.
The latter condition, introduced by Kesten in \cite{Kes73},  is often more straightforward to verify than
 \ref{Condi-NonLattice} itself, making it a useful criterion in practical applications.

Recall that $N(g) = \max \{ \|g\|, \|g\|^{-1} \}$. 
Under \ref{Condi-FK} and $\int_{\bb G} \log^3 N(g) \mu(dg) < \infty$, the following limits exist 
(cf.\ Lemma \ref{transfer operator_Pit}  below): 
\begin{align}
&\sigma^2 :=  \lim_{n \to \infty} \frac{1}{n} \int_{\mathcal S} \bb E [ \left( \log  \| G_n v \| - n \lambda_{\mu} \right)^2 ] \nu(dv), 
\label{def-sigma-001}\\
& m_3 : = \lim_{n \to \infty} 
\frac{1}{n} \int_{\mathcal{S}} \bb E \left[ \left( \log \|G_n v \| - n \lambda_{\mu} \right)^3  \right] \nu(dv) 
 \in  \bb R.  \label{Def-m3}
\end{align}
For $f, v \in \bb{R}^{d}_+ \setminus \{0\}$, let
\begin{align} \label{drift-b001}
b(v): = \lim_{n \to \infty}
   \mathbb{E} \big[ ( \log \|G_n v\| - n \lambda_{\mu} ) \big], 
\qquad
d(f) :=  \int_{ \mathcal{S} }  \log \langle f, u \rangle \nu(du). 
\end{align}
It will be shown in Lemma \ref{Lem-B0}  that the limit and the integral in \eqref{drift-b001} exist, and that
both functions $b$ and $d$ 
 are Lipschitz continuous on $\mathcal{S}$.
Clearly the definition of $b$ and $d$ can be extended to $f, v \in \bb{R}^{d}_+ \setminus \{0\}$. 
Let $\phi(y) = \frac{1}{\sqrt{2 \pi}} e^{- \frac{y^2}{2}}$, $y \in \bb R$, be the standard normal density function
and let $\Phi(t) = \int_{-\infty}^t \phi(y) dy$, $t \in \bb R$ be the standard normal distribution function.

\begin{theorem}\label{MainThm_Edgeworth_01}
Assume \ref{Condi-FK-weak}, \ref{Condi-NonLattice} and $\int_{\bb G} \log^3 N(g) \mu(dg) < \infty$. 
Then, for any compact set $K \subset \bb{R}^{d}_+ \setminus \{0\}$, 
we have, as $n \to \infty$, uniformly in $f, v  \in K$ and $y \in \mathbb{R}$,
\begin{align}\label{Edge-Exp-Coeff}
& \bb P \left( \frac{\log \langle f, G_n v \rangle - n \lambda_{\mu} }{\sigma \sqrt{n}} \leq y \right)   \notag\\
& =  \Phi(y) + \frac{ m_3 }{ 6 \sigma^3 \sqrt{n}} (1-y^2) \phi(y) 
    -  \frac{b(v) + d(f) }{ \sigma \sqrt{n} } \phi(y)  +  o \left( \frac{ 1 }{\sqrt{n}} \right). 
\end{align}   
\end{theorem}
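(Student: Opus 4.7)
The plan is to follow the standard Fourier-inversion route: first derive an asymptotic expansion, uniform in $f, v \in K$, of the characteristic function
\[
\varphi_n(t) := \mathbb{E}\biggl[\exp\biggl(it\,\frac{\log \langle f, G_n v\rangle - n\lambda}{\sigma\sqrt{n}}\biggr)\biggr],
\]
and then convert it into an expansion of the distribution function via an Esseen-type smoothing inequality. The key tool developed in the paper for this purpose is a spectral gap theorem for a family of perturbed transfer operators associated with the new coefficient cocycle $\log(\langle f, G_n v\rangle/\langle f, v\rangle)$.

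First I would exploit the multiplicative decomposition $\langle f, G_n v\rangle = \|G_n v\|\,\langle f, G_n\!\cdot\! v\rangle$ to split
\[
\log \langle f, G_n v\rangle - n\lambda = \bigl(\log \|G_n v\| - n\lambda\bigr) + \log \langle f, G_n\!\cdot\! v\rangle.
\]
The first summand is the norm cocycle, and the second is uniformly bounded for $f \in K$ and $v \in \supp \nu$ thanks to \ref{Condi-FK-weak}, which forces every entry of $gv$ to be comparable to $\|gv\|$. The joint behaviour of the two summands together with the Markov chain $(G_n\!\cdot\! v)$ is encoded by a perturbed family $P_s$ acting on a suitable Banach space of H\"older functions on $\mathcal S$. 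The spectral gap theory yields, for complex $s$ near $0$, the decomposition
\[
P_s = \kappa(s)\,\Pi_s + N_s, \qquad \Pi_s^2 = \Pi_s,\ \ \Pi_s N_s = N_s\Pi_s = 0,\ \ \rho(N_s)<|\kappa(s)|,
\]
together with a smooth simple dominant eigenvalue whose Taylor expansion reads
\[
\log \kappa(s) = i\lambda s - \tfrac{1}{2}\sigma^2 s^2 - \tfrac{i}{6}m_3 s^3 + o(|s|^3),
\]
the third-order term being available precisely because of the optimal moment condition $\int \log^3 N(g)\,\mu(dg)<\infty$.

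Substituting $s = t/(\sigma\sqrt{n})$, expanding $\kappa(s)^n$ to third order in $s$ and $\Pi_s$ to first order, and using $\Pi_0 \phi = \nu(\phi)$ together with the identifications of $b(v)$ as the limiting centering of $\log\|G_n v\|$ and of $d(f) = \int \log \langle f, u\rangle\,\nu(du)$ as the asymptotic mean of $\log\langle f, G_n\!\cdot\! v\rangle$ (whose law converges to $\nu$ by ergodicity), one obtains, uniformly for $f, v \in K$ and $|t|\leq n^{\alpha}$ with $\alpha<1/2$,
\[
\varphi_n(t) = e^{-t^2/2}\biggl[\,1 + \frac{(it)^3 m_3}{6\sigma^3\sqrt{n}} + \frac{it\,(b(v)+d(f))}{\sigma\sqrt{n}} + o\Bigl(\frac{1}{\sqrt{n}}\Bigr)\,\biggr].
\]
For the remaining range of $t$, the non-arithmeticity assumption \ref{Condi-NonLattice} combined with a standard compactness/analyticity argument yields a spectral radius strictly less than $1$ for $P_{t/(\sigma\sqrt{n})}$, whence exponential decay of $\varphi_n$ that makes the Fourier-inversion tails negligible. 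Applying the Esseen smoothing inequality then converts this into the expansion of the distribution function stated in \eqref{Edge-Exp-Coeff}.

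The main obstacle lies in the spectral step, namely in justifying $C^3$-smoothness of $s \mapsto P_s$ on the chosen Banach space under the \emph{optimal} third-moment condition together with the weaker column hypothesis \ref{Condi-FK-weak}: the derivatives at $s=0$ involve powers of $\log(\langle f, gv\rangle/\langle f, v\rangle)$, and controlling their third moments requires delicate cocycle estimates relating the coefficient to $\log\|g\|$, which is exactly at the borderline of integrability. A secondary difficulty is to pin down the first-order correction as $b(v)+d(f)$: the derivative of $\Pi_s$ at $s=0$ must be expressed through a Poisson-type equation whose solution on the norm-cocycle side gives $b(v)$, while the left-eigenfunction contribution, evaluated against $\nu$, yields $d(f)$; verifying that these two mechanisms combine without overlap into the additive correction $b(v)+d(f)$ requires careful bookkeeping of the perturbative expansion.
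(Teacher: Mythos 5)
Your plan rests on the same Nagaev--Guivarc'h machinery as the paper (spectral decomposition of perturbed transfer operators, third-order expansion of the dominant eigenvalue, non-arithmeticity for large frequencies, Esseen smoothing), but you organize the crucial middle step differently. The paper first proves the Edgeworth expansion for the cocycle $\log\frac{\langle f,G_nv\rangle}{\langle f,v\rangle}$ with starting points in $\mathcal{S}_{\epsilon}$ (Proposition \ref{Prop_Edgewoth_aa}), via the conjugated operators $R_{it,f}$, and then passes to $\log\langle f,G_nv\rangle$ for arbitrary $v$ by conditioning on $g_1$ at the level of distribution functions, estimating the shifted arguments under the moment conditions, and finally identifying the correction through $A(f,v)+B(f,v)=b(v)+d(f)$. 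You instead write $\log\langle f,G_nv\rangle=\log\|G_nv\|+\log\langle f,G_n\!\cdot\! v\rangle$ and treat $u\mapsto e^{it\log\langle f,u\rangle/(\sigma\sqrt n)}$ as a target function for the norm-cocycle operator; since $G_n\!\cdot\! v\in\mathcal{S}_{\epsilon}$ for $n\geq1$ by Lemma \ref{lem equiv Kesten}, the $f$-dependence sits at the endpoint where it is bounded, the derivative of $\Pi_{it}$ at $t=0$ produces $b(v)$ (Lemma \ref{Lem-B0}) and the derivative of $t\mapsto\nu\big(e^{it\log\langle f,\cdot\rangle}\big)$ produces $d(f)$, so the correction $b(v)+d(f)$ comes out additively and directly, bypassing the paper's conditional-expectation step and the identity $A+B=b+d$. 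This is a legitimate, and in this respect cleaner, route; what each approach buys is that yours avoids the truncation estimates $I_{21},\dots,I_{24}$, while the paper's cocycle-plus-conditioning scheme is the one it reuses for the large deviation results.

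Three points must be repaired or made explicit before your sketch is a proof. First, $\log\langle f,\cdot\rangle$ is neither bounded nor H\"older on all of $\mathcal{S}$ when $f$ has vanishing entries (e.g.\ $f=e_1\in K$), so your target function does not lie in $\scr B_{\gamma}$; you must either replace $\langle f,u\rangle$ by $\max\{\langle f,u\rangle,\epsilon'\}$ (harmless, since the chain is in $\mathcal{S}_{\epsilon}$ after one step and $\supp\nu\subseteq\mathcal{S}_{\epsilon}$) or condition on $g_1$ as the paper does; your restriction ``$v\in\supp\nu$'' must also be removed, since the theorem concerns arbitrary $v\in K$. Second, your frequency split is wrong as stated: condition \ref{Condi-NonLattice} gives a spectral-radius bound uniformly only for frequencies $t/(\sigma\sqrt n)$ in a compact set of $\mathbb{R}\setminus\{0\}$, i.e.\ for $|t|\asymp\sqrt n$; on the intermediate range $n^{\alpha}\leq|t|\leq\delta\sqrt n$ the spectral radius tends to $1$ and there is no uniform exponential decay, so the perturbative expansion (or at least the Gaussian bound $|\lambda_{it/(\sigma\sqrt n)}|^{n}\leq e^{-t^{2}/4}$) must be carried out on the whole range $|t|\leq\delta\sqrt n$, exactly as in the proof of Proposition \ref{Prop_Edgewoth_aa}. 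Third, the ``main obstacle'' you leave open, namely the third-order expansion of the eigenvalue under the optimal condition $\int_{\Gamma_{\mu}}\log^{3}N(g)\mu(dg)<\infty$, is precisely what part 2 of Lemma \ref{transfer operator_Pit} supplies (building on \cite{HH08}), so your argument should invoke it rather than attempt $C^{3}$-smoothness of the operator family from scratch; finally, the uniformity over a general compact $K\subset\bb{R}^{d}_{+}\setminus\{0\}$ is obtained by normalizing $f,v$ and shifting $y$ by $\log(\|f\|\|v\|)/(\sigma\sqrt n)$, using $b(v)=b(\bar v)+\log\|v\|$ and $d(f)=d(\bar f)+\log\|f\|$, as in Step 2 of the paper's proof.
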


With the  choice $f = \mathbf 1$, we have that $\| G_n  v\| = \langle \mathbf 1, G_n v \rangle$ and  $d(\mathbf 1) = 0$,
where $\mathbf 1 = (1, \ldots, 1)^{\rm T} \in \bb{R}^{d}_+$. 
Therefore, from \eqref{Edge-Exp-Coeff} we get the following first-order Edgeworth expansion for the vector norm $\|G_n v\|$: as $n \to \infty$,  
uniformly in $v  \in K$ and $y \in \mathbb{R}$,
\begin{align}\label{Edge-Exp-VecNorm}
& \bb P \left( \frac{\log \| G_n v \| - n \lambda_{\mu} }{\sigma \sqrt{n}} \leq y \right)  \notag\\
& =  \Phi(y) + \frac{ m_3 }{ 6 \sigma^3 \sqrt{n}} (1-y^2) \phi(y) 
    -  \frac{b(v) }{ \sigma \sqrt{n} } \phi(y)  +  o \left( \frac{ 1 }{\sqrt{n}} \right). 
\end{align}   
With the choice $f = v = \mathbf 1$, we have that $\| G_n \| = \langle \mathbf 1, G_n \mathbf 1 \rangle$ and again $d(\mathbf 1) = 0$. 
Hence, \eqref{Edge-Exp-Coeff} implies
the first-order Edgeworth expansion for the matrix norm $\|G_n\|$: as $n \to \infty$,  uniformly in $y \in \mathbb{R}$,
\begin{align}\label{Edge-Exp-Norm}
& \bb P \left( \frac{\log \| G_n \| - n \lambda_{\mu} }{\sigma \sqrt{n}} \leq y \right)  \notag\\
& =  \Phi(y) + \frac{ m_3 }{ 6 \sigma^3 \sqrt{n}} (1-y^2) \phi(y) 
    -  \frac{b(\mathbf 1) }{ \sigma \sqrt{n} } \phi(y)  +  o \left( \frac{ 1 }{\sqrt{n}} \right). 
\end{align}   
Both \eqref{Edge-Exp-VecNorm} and \eqref{Edge-Exp-Norm} are new to our knowledge. 

The third moment condition assumed in Theorem \ref{MainThm_Edgeworth_01} is indeed optimal. 
It is worth noting that, for invertible matrices, the first-order Edgeworth expansion for the coefficients $\langle f, G_n v \rangle$ 
has recently been derived in \cite{XGL22arXiv}. 
This achievement was made using a different method, which relies on the H\"older regularity of the invariant measure $\nu$ 
and a partition of unity on the projective space $\mathbb{P}^{d-1}$. 
However, it is important to mention that \cite{XGL22arXiv} requires an exponential moment, 
and the question of whether this requirement can be relaxed to the third moment remains an open problem.

Theorem \ref{MainThm_Edgeworth_01} clearly entails the Berry-Esseen theorem for the coefficients $\langle f, G_n v \rangle.$ However, the condition \ref{Condi-NonLattice}, as required in Theorem \ref{MainThm_Edgeworth_01}, can be somewhat relaxed. 
We only need to assume that $\sigma > 0,$ as stated below.
The result presented below encompasses also the matrix norm $\|G_n\|$ and the spectral radius $\rho(G_n)$.

\begin{theorem}\label{Thm-BerryEsseen}
Assume \ref{Condi-FK-weak}, $\sigma >0$ and $\int_{\bb G} \log^3 N(g) \mu(dg) < \infty$. 
Then, for any compact set $K \subset \bb{R}^{d}_+ \setminus \{0\}$, 
there exists a constant $c>0$ such that for any $n \geq 1$, $f, v  \in K$ and $y \in \mathbb{R}$,
\begin{align}\label{Main-BerryEsseen-Coeff}
\left| \bb P \left( \frac{\log \langle f, G_n v \rangle - n \lambda_{\mu} }{\sigma \sqrt{n}} \leq y \right)  - \Phi(y)  \right|
\leq  \frac{c}{\sqrt{n}}.
\end{align}
Moreover, \eqref{Main-BerryEsseen-Coeff} remains valid when $\langle f, G_n v \rangle$ is replaced by $\|G_n\|$ or $\rho(G_n)$.
\end{theorem}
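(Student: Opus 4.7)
The plan is to apply the Nagaev--Guivarc'h spectral method to the new cocycle $\sigma_f(G_n, v) = \log \frac{\langle f, G_n v\rangle}{\langle f, v\rangle}$ introduced just before Theorem~\ref{MainThm_Edgeworth_01}, and to conclude via Esseen's smoothing inequality. The key observation that allows dropping the non-arithmeticity condition \ref{Condi-NonLattice} is that a Berry--Esseen bound at the rate $1/\sqrt n$ only requires a quantitative expansion of the characteristic function on a \emph{fixed} interval $|t|\le \delta$: the behaviour at large $|t|$, which is what forces the use of \ref{Condi-NonLattice} in Theorem~\ref{MainThm_Edgeworth_01}, can here be absorbed into the tail term of the Esseen inequality.

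Concretely, I first consider the family of perturbed transfer operators associated to $\sigma_f$,
\begin{equation*}
P_t \varphi(v) = \int_{\Gamma_\mu} \left(\frac{\langle f, g v\rangle}{\langle f, v\rangle}\right)^{\!it} \varphi(g\cdot v)\,\mu(dg),
\end{equation*}
acting on a suitable Banach space of H\"older functions on $\mathcal S$, and invoke the spectral gap theory for $\sigma_f$ developed in the paper to obtain, uniformly in $f\in K$ and for $|t|\le \delta$, a decomposition $P_t^n = \lambda(t)^n \Pi_t + N_t^n$ with $\|N_t^n\|\le Cq^n$, $q<1$, together with the Taylor expansion $\lambda(t)=\exp(i\lambda t-\sigma^2 t^2/2+O(|t|^3))$ (the cubic remainder being exactly what the third moment supplies). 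Since $\mathbb E[e^{it\sigma_f(G_n,v)}] = P_t^n \mathbf 1(v)$, this yields, uniformly in $f,v\in K$ and $|s|\le \delta\sigma\sqrt n$,
\begin{equation*}
\left|\mathbb E\bigl[e^{is(\sigma_f(G_n,v) - n\lambda)/(\sigma\sqrt n)}\bigr] - e^{-s^2/2}\right| \le C\Bigl(\tfrac{|s|^3}{\sqrt n}\,e^{-s^2/4} + q^n\Bigr).
\end{equation*}
Applying Esseen's smoothing lemma with truncation $T=\delta\sigma\sqrt n$, the integral over $|s|\le T$ is $O(1/\sqrt n)$ by the bound above, and the tail $C/T$ is also $O(1/\sqrt n)$; hence the Berry--Esseen estimate holds for $\sigma_f(G_n,v)$.

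It then remains to transfer the bound. Since $\log\langle f, G_n v\rangle = \sigma_f(G_n,v) + \log\langle f, v\rangle$ and $\log\langle f,v\rangle$ is bounded uniformly over $f,v$ in the compact set $K\subset \bb R^{d}_+\setminus\{0\}$, the shift is $O(1)$ and does not affect the rate, giving \eqref{Main-BerryEsseen-Coeff}. For $\|G_n\|$ one specialises to $f=v=\mathbf 1$, since $\|G_n\| = \langle \mathbf 1, G_n \mathbf 1\rangle$. For $\rho(G_n)$, one checks that \ref{Condi-FK-weak} propagates to products, so that the columns of $G_n$ remain column-comparable with a constant $c$ independent of $n$; combined with $\rho(G_n)\ge \operatorname{tr}(G_n)/d$, this gives $c_1 \|G_n\|\le \rho(G_n)\le \|G_n\|$ for some $c_1>0$, hence $|\log\rho(G_n)-\log\|G_n\||=O(1)$ and the rate carries over.

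The hardest ingredient is the first step: establishing the spectral gap and perturbation expansion of the $f$-dependent operator $P_t$ \emph{uniformly in $f\in K$} under only a third moment assumption (rather than the exponential moment used in \cite{XGL21}). This requires a delicate choice of Banach space and a Keller--Liverani-type argument at low regularity, and constitutes the novel spectral input of the paper.
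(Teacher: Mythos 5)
Your overall strategy (Nagaev--Guivarc'h expansion of the characteristic function on a fixed window $|t|\le\delta$ plus Esseen smoothing with $T=\delta\sigma\sqrt n$, so that \ref{Condi-NonLattice} is never needed) is sound and is indeed how the paper obtains the rate $1/\sqrt n$; the paper, however, runs this argument for the $f$-independent norm operator $R_{it}$ and the cocycle $\log\|G_nv\|$, obtaining \eqref{Berry-Esseen-cocy}, rather than for the $f$-dependent operator. Your treatment of $\|G_n\|$ (take $f=v=\mathbf 1$) and of $\rho(G_n)$ (column-comparability propagates to $G_n$, whence $\rho(G_n)\ge\mathrm{tr}(G_n)/d\ge c\,\|G_n\|$ and $\rho(G_n)\le\|G_n\|$) is correct, and the latter is a legitimate alternative to the paper's sandwich $\langle e_1,G_ne_1\rangle\le\rho(G_n)\le\|G_n\|$. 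Also, the spectral input you describe as the ``hardest ingredient'' is not really delicate: by Lemma \ref{transfer operator_Pit-f}, $R_{it,f}$ is conjugate to $R_{it}$ through multiplication by $e^{it\log\langle f,\cdot\rangle}$ on $\mathcal S_\epsilon$, so its spectral gap and eigenvalue expansion follow from Lemma \ref{transfer operator_Pit} (itself from \cite{HH08}) with no Keller--Liverani argument.

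The genuine gap is in your transfer step. You write that $\log\langle f,G_nv\rangle=\sigma_f(G_n,v)+\log\langle f,v\rangle$ with $\log\langle f,v\rangle$ ``bounded uniformly over $f,v\in K$''; this is false for a general compact $K\subset\bb R^{d}_+\setminus\{0\}$, since $\langle f,v\rangle$ can vanish or be arbitrarily small there (e.g.\ $f=e_1$, $v=e_2$), in which case $\sigma_f(G_n,v)$ is not even defined. The same issue undermines the claimed uniformity of the spectral estimates: the operator $R_{it,f}$ and the identity $\bb E[e^{it(\sigma_f(G_n,v)-n\lambda)}]=R_{it,f}^n\mathbf 1(v)$ are only set up for $v\in\mathcal S_\epsilon$, where $\langle f,v\rangle\ge\epsilon$, not for arbitrary $v\in K$. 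Closing this gap requires precisely the device your proposal omits: either prove the Berry--Esseen bound for $\log\|G_nv\|$ and use that under \ref{Condi-FK-weak} one has $G_n\cdot v\in\mathcal S_\epsilon$ for all $n\ge1$ (Lemma \ref{lem equiv Kesten}), hence the two-sided bound $\epsilon\|G_nv\|\le\langle f,G_nv\rangle\le\|G_nv\|$ for all $f\in\mathcal S$, which makes the discrepancy deterministic and bounded (this is the paper's route to \eqref{Main-BerryEsseen-Coeff}); or condition on $g_1$ so that the chain starts at $g_1\cdot v\in\mathcal S_\epsilon$ and then control the resulting \emph{random} shift $\log\langle f,g_1v\rangle-\lambda$, which is unbounded but has finite moments, via a truncation/Lipschitz estimate on $\Phi$. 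Without one of these two arguments the passage from the cocycle $\sigma_f$ to the coefficient $\langle f,G_nv\rangle$, uniformly over compact $K$, does not go through.
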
    

Theorem \ref{Thm-BerryEsseen} provides the convergence rate in the central limit theorem for the coefficients 
$\langle f, G_n v \rangle$, 
as originally established by Furstenberg and Kesten \cite{FK60} under the stronger condition \ref{Condi-FK}.
It is noteworthy that in \cite{XGL19c}, the bound \eqref{Main-BerryEsseen-Coeff} has been derived under the exponential moment condition, specifically $\int_{\bb G} N(g)^{\eta} \mu(dg) < \infty$ for some constant $\eta > 0$. 
However, in our context, we  require only the third moment condition, which is optimal.
Theorem \ref{Thm-BerryEsseen} 
has been announced in the arXiv version \cite{XGL22arXiv-b}. 
The Berry-Esseen bound for the matrix norm $\|G_n\|$ has then been improved 
  in \cite[Theorem 7.1]{CDM23}
under a condition weaker than \ref{Condi-FK-weak}, 
namely that $\mu$ is strictly contracting, cf.\ Definition 2.2 in \cite{CDM23}.
Under this less restrictive condition, additional results concerning 
the spectral radius $\rho(G_n)$
and coefficients $\langle f, G_n v \rangle$ have also been established in \cite{CDM23}.
It is worth noting that the proofs in \cite{CDM23} rely on the blocking approach developed recently in \cite{CDMP21}. 
This approach is powerful and opens new research avenues, 
particularly in the study of limit theorems for products of random matrices under minimal moment assumptions.

Our approach is different from those in \cite{CDMP21, CDM23}. 
 To establish Theorem \ref{MainThm_Edgeworth_01}, we develop a spectral gap theory 
for the following cocycle: for any $f \in \mathcal S$,  $g\in \Gamma_{\mu}$ and $v\in \mathcal S$,
\begin{align}\label{def-cocycle-new}
(g, v) \mapsto  \sigma_f(g, v) := \log  \frac{\langle f, g v \rangle}{ \langle f, v \rangle }. 
\end{align}
To the best of our knowledge, the cocycle \eqref{def-cocycle-new} has not yet been studied in the literature. 
Then we apply the Nagaev-Guivarc'h method to establish the first-order Edgeworth expansion
for  $\sigma_f(G_n, v)$. 
The next step in our strategy is to pass from the Edgeworth expansion for  $\sigma_f(G_n, v)$
to that for  $\log \langle f, G_n v \rangle$
by making use of condition \ref{Condi-FK-weak} and the technique of taking conditional expectation.

Interestingly, for invertible matrices, the situation becomes more intricate. 
Cuny, Dedecker, Merlev\`ede and Peligrad \cite{CDMP21b} recently established a rate of convergence of order $\log n/\sqrt{n}$ 
for the coefficients $\langle f, G_n v \rangle$ under the exponential moment condition. 
Dinh, Kaufmann and Wu \cite{DKW21, DKW21b} improved this result by achieving the optimal rate of $1/\sqrt{n}$ 
under the same moment condition. 
However, proving the Berry-Esseen theorem under the third moment condition remains an open problem.
We also note that for the more straightforward case of the norm cocycle $\log \|G_n v\|$, 
recent significant progress has been made by Cuny, Dedecker, Merlev\`ede and Peligrad \cite{CDMP21}, 
where the Berry-Esseen theorem for $\log \|G_n v\|$ is established under the fourth moment condition.

\subsection{Upper and lower large deviations}\label{Upper and lower large deviations}

To formulate the large deviation results for the coefficients $\langle f, G_n v \rangle$,  
we need to state some spectral properties of transfer operators 
related to the vector norm $\|G_n v\|$.
Set $I_{\mu} = I_{\mu}^+ \cup  I_{\mu}^-$, where 
\begin{align*}
I_{\mu}^+ = \left\{ s \geq 0: \mathbb{E}( \|g_1\|^{s}) < \infty \right\}
\quad \mbox{and} \quad
I_{\mu}^- = \left\{ s \leq 0: \mathbb{E}(\|g_1\|^{s}) < \infty \right\}. 
\end{align*}
The sets $I_{\mu}^+$ and $I_{\mu}^-$ are non-empty since they contain at least the point $s = 0$. 
Denote by $(I_{\mu}^+)^{\circ}$  the interior of $I_{\mu}^+$
and by  $(I_{\mu}^-)^{\circ}$ the interior of $I_{\mu}^-$. 
In the sequel we assume that $(I_{\mu}^+)^{\circ}$ (or $(I_{\mu}^-)^{\circ}$) 
is non-empty, meaning that there exists $s_0 \neq 0$ such that $\mathbb{E}(\|g_1\|^{s_0}) < \infty$, 
which is in fact an exponential moment condition for $\log \|g_1\|$.
For any $s \in I_{\mu}$,  
the following limit exists: 
\begin{align*}
\kappa(s) = \lim_{n\to\infty}\left(\mathbb{E}\| G_n \|^{s}\right)^{\frac{1}{n}},  
\end{align*}
see Proposition \ref{change-measure-neg01} for more details.  
Let $\Lambda(s) = \log \kappa(s)$, $s \in I_{\mu}$. Then 
the function $\Lambda$ plays the same role as the cumulant generating function in the case of 
sums of independent and identically distributed real-valued random variables, 
and it is convex and analytic on $I_{\mu}^{\circ}$. 
Introduce the Fenchel-Legendre transform of $\Lambda$: 
\begin{align*}
\Lambda^{\ast}(t) = \sup_{s \in I_{\mu}} \{ st - \Lambda(s) \}, \quad t \in \bb R. 
\end{align*}
If $q = \Lambda'(s)$ for some $s\in I_{\mu}^{\circ} \setminus \{0\}$, then $\Lambda^*(q) = s q - \Lambda(s) >0$;
if $q = \Lambda'(0)$, then $\Lambda^*(q) = 0$.

For any $s\in I_{\mu}$, the transfer operator $P_s$ and the conjugate transfer operator $P_{s}^{*}$ are defined as follows: 
for any bounded measurable function $\varphi$ on $\mathcal{S}$ and $v \in \mathcal S$, 
\begin{align}\label{transfoper001}
P_{s}\varphi(v)   = \int_{\bb G} \|g v\|^{s} \varphi( g \!\cdot\! v ) \mu(dg),  \quad  
P_{s}^{*}\varphi(v)   = \int_{\bb G}  \|g^{\mathrm{T}} v\|^{s} \varphi(g^{\mathrm{T}} \!\cdot\! v) \mu(dg), 
\end{align}
where $g^\mathrm{T}$ is the transpose of $g$.
In the sequel,   
for any Borel measure $\rho$ on $\mathcal S$,
$P_s \rho$ means the operator defined by the identity
 $(P_s \rho)\varphi = \rho (P_s \varphi)$
for any bounded measurable function $\varphi$ on $\mathcal S$; 
similar convention will apply for other operators like $P_s^*$.
The $n$-th iteration of $P_s$ is denoted by $P_s^n$ and can be computed as follows: 
for any bounded measurable function $\varphi$ on $\mathcal S$ and $v \in \mathcal S$, 
\begin{align*} 
P_s^n \varphi(v) = \int_{\bb G}\ldots \int_{\bb G}  \| g_n\ldots g_1 v\|^{s}  \varphi( (g_n\ldots g_1) \cdot v) \mu(dg_1)\ldots \mu(dg_n).
\end{align*}
Under condition \ref{Condi-FK-weak}, 
the operator $P_{s}$ has a unique probability eigenmeasure $\nu_{s}$ on $\mathcal S$
and a unique (up to a scaling constant) strictly positive and continuous eigenfunction $r_{s}$ on $\mathcal S,$ 
corresponding to the eigenvalue $\kappa(s)$: 
\begin{align*}
P_s r_s=\kappa(s)r_s, \quad P_s\nu_{s}=\kappa(s)\nu_{s}.
\end{align*}
Similarly, there exist a unique probability eigenmeasure $\nu_{s}^{*}$ on $\mathcal S$
and a unique (up to a scaling constant) strictly positive and continuous function $r_{s}^{\ast}$
 such that
$$ P_{s}^{*}r_{s}^{*}=\kappa(s)r_{s}^{*}, \quad P_{s}^{*}\nu_{s}^{*}=\kappa(s)\nu_{s}^{\ast}.$$
With a particular choice of the scaling constant,
 the eigenfunction $r_{s}$ (resp.\ $r_{s}^*$) 
 can be expressed as below: 
\begin{align*}
r_{s}(v)   = \int_{\mathcal{S}} \langle v, u \rangle^{s} \nu^*_{s}(du),  
\quad    
r_{s}^*(v)   = \int_{\mathcal{S}} \langle v, u \rangle^{s} \nu_{s}(du),  \quad   v \in \mathcal{S}.
\end{align*}
From these expressions, the eigenfunctions $r_{s}$ and $r_{s}^*$ can be extended to $\bb R_+^d \setminus \{0\}$.

Next we introduce a Banach space which will be used in the sequel. 
The space $\mathcal{S}$ is equipped with 
the Hilbert cross-ratio metric $\mathbf d$: 
for any $u, v \in \mathcal{S}$,    
\begin{align}\label{Def-distance}
\mathbf{d}(u, v) = \frac{1- m(u,v)m(v,u)}{1 + m(u,v)m(v,u)}, 
\end{align}
where $m(u,v)=\sup\{ \lambda \geq 0: \   \lambda v_i \leq u_i,\  \forall i=1,\ldots, d  \}$ for $u =(u_1, \ldots, u_d)$ 
and $v = (v_1, \ldots, v_d)$. 
Let $\mathcal{C}(\mathcal{S})$ be the space of complex-valued continuous functions on $\mathcal{S}$. 
For any $\varphi\in \mathcal{C(S)}$ and $\gamma>0$, denote
\begin{align*}
\|\varphi\|_{\infty}: = \sup_{v \in \mathcal{S}} |\varphi(v)|
\quad \mbox{and} \quad
[\varphi]_{\gamma} : = \sup_{u, v \in \mathcal{S}: u \neq v} \frac{|\varphi(u) - \varphi(v)|}{\mathbf{d}(u, v)^{\gamma}}, 
\end{align*}
and the Banach space
\begin{align*}
\scr B_{\gamma}:= \Big\{ \varphi \in \mathcal{C(S)}: 
                    \|\varphi\|_{\gamma} := \|\varphi\|_{\infty} + [\varphi]_{\gamma} < \infty  \Big\}.
\end{align*}

Now we state the Bahadur-Rao-Petrov type upper and lower large deviation asymptotics 
for the coefficients $\langle f, G_n v \rangle$ under an exponential moment condition.
Notice that in the following,  
condition \ref{Condi-FK} will be needed instead of \ref{Condi-FK-weak} while considering negative $s$.

\begin{theorem}\label{Thm-Petrov-Uppertail}
Let $K \subset \bb{R}^{d}_+ \setminus \{0\}$ be a compact set. 

\noindent 1.   
Assume \ref{Condi-FK-weak} and \ref{Condi-NonLattice}. 
Let $J^+ \subseteq (I_\mu^+)^\circ$ be a compact set. 
Then, for any $0 < \gamma \leq \min_{t \in J^+} \{ t, 1\}$,  we have, as $n \to \infty$, 
uniformly in $s \in J^+$, $f, v  \in K$ and $\varphi \in \scr B_{\gamma}$, with $q = \Lambda'(s)$ and $\sigma_s = \sqrt{\Lambda''(s)}$, 
\begin{align}\label{Petrov-Upper}
& \mathbb{E} \left[ \varphi(G_n \cdot v) \mathds 1_{ \left\{ \log \langle f,  G_n v \rangle \geq n q \right\} }  \right]   \notag\\
& = \frac{r_s(v)}{\nu_s(r_s)} \frac{ \exp(- n\Lambda^{*}(q)) }{ s\sigma_{s}\sqrt{2\pi n} }  
    \left[ \int_{\mathcal{S}} \varphi(u) \langle f, u \rangle^s \nu_s(du) + \| \varphi \|_{\gamma} o(1) \right]. 
\end{align}

\noindent 2.  
Assume \ref{Condi-FK} and \ref{Condi-NonLattice}. 
Let $J^- \subseteq (I_\mu^-)^\circ$ be a compact set. 
Then, for any $0 < \gamma \leq \min_{t \in J^-} \{ |t|, 1\}$,  we have, as $n \to \infty$,  
uniformly in $s \in J^-$, $f, v  \in K$ and $\varphi \in \scr B_{\gamma}$, 
with $q = \Lambda'(s)$ and $\sigma_s = \sqrt{\Lambda''(s)}$, 
\begin{align}\label{Petrov-Lower}
& \mathbb{E} \left[ \varphi(G_n \cdot v) \mathds 1_{ \left\{ \log \langle f,  G_n v \rangle \leq n q  \right\} }  \right]  \notag\\
& = \frac{r_s(v)}{\nu_s(r_s)} \frac{ \exp(- n\Lambda^{*}(q)) }{ -s\sigma_{s}\sqrt{2\pi n} }  
    \left[ \int_{\mathcal{S}} \varphi(u) \langle f, u \rangle^s \nu_s(du) + \| \varphi \|_{\gamma} o(1) \right]. 
\end{align}
\end{theorem}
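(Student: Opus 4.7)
My plan is to combine the Cram\'er--Esscher change of measure with a Bahadur-Rao-Petrov-type Laplace asymptotic. Using the eigendata $(\kappa(s), r_s, \nu_s)$ of the transfer operator $P_s$, I define the Markov kernel $Q_s\varphi(v) = (\kappa(s) r_s(v))^{-1} P_s(r_s\varphi)(v)$ with invariant probability $\pi_s := r_s\nu_s/\nu_s(r_s)$, and the associated path law $\mathbb{Q}_s^v$. The standard conjugation identity together with $\kappa(s)^n \|G_n v\|^{-s} = e^{-n\Lambda^*(q)} e^{-sW_n}$, where $W_n := \log\|G_n v\| - nq$, recasts the left-hand side of \eqref{Petrov-Upper} as
\[
r_s(v)\, e^{-n\Lambda^*(q)}\, \mathbb{E}_{\mathbb{Q}_s^v}\!\left[ \frac{\varphi(G_n\cdot v)}{r_s(G_n\cdot v)}\, e^{-sW_n}\, \mathds 1_{\{W_n + L_n \geq 0\}} \right],\quad L_n := \log\langle f, G_n\cdot v\rangle.
\]
Here \ref{Condi-FK-weak} plays an essential role: it forces the entries of $G_n\cdot v$ to be bounded below by a positive constant depending only on $c$ and $d$ after one step, so $L_n$ is uniformly bounded in $n$, $\omega$, and $f \in K$. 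Changing the indicator variable to $U_n := W_n + L_n = \log\langle f, G_n v\rangle - nq$ absorbs the correction into the integrand and reduces matters to $\mathbb{E}_{\mathbb{Q}_s^v}[\Theta(G_n\cdot v)\, e^{-sU_n}\, \mathds 1_{\{U_n \geq 0\}}]$ with $\Theta(u) := \varphi(u)\,\langle f, u\rangle^s/r_s(u)$.

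Since the factor $e^{-sU_n}$ concentrates on $U_n = O(1)$, the task reduces to a uniform joint local limit theorem under the tilt: for a sufficiently rich class of test functions $\psi$ on $\mathbb{R}$,
\[
\mathbb{E}_{\mathbb{Q}_s^v}\!\left[ \Theta(G_n\cdot v)\, \psi(U_n) \right] = \pi_s(\Theta)\, \int_{\mathbb{R}} \psi(t)\, \frac{e^{-t^2/(2\sigma_s^2 n)}}{\sigma_s\sqrt{2\pi n}}\, dt + o\!\left(\frac{\|\Theta\|_\gamma}{\sqrt{n}}\right),
\]
uniformly for $s\in J^+$ and $v\in K$. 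Applied with $\psi(t) = e^{-st}\mathds 1_{[0,\infty)}(t)$ (after standard smoothing), this produces $\pi_s(\Theta)/(s\sigma_s\sqrt{2\pi n}) \cdot (1 + o(1))$, and unpacking $\pi_s(\Theta) = \nu_s(r_s)^{-1} \int_{\mathcal S} \varphi(u)\,\langle f, u\rangle^s\, \nu_s(du)$ yields the asymptotic \eqref{Petrov-Upper}. To establish the display I would use the Nagaev-Guivarc'h method on the two-parameter perturbation $P_{s, it}\varphi(v) = \int_{\Gamma_\mu} \|gv\|^{s+it}\, \varphi(g\cdot v)\, \mu(dg)$ on $\scr B_{\gamma}$, modified to incorporate the coefficient cocycle $\sigma_f(G_n, v)$ announced in the introduction so as to track $U_n$ rather than $W_n$. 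The required ingredients are: (i) a simple dominant eigenvalue of $P_{s,it}$ analytic in $t$ near $0$, with spectral gap uniform in $s\in J^+$; (ii) the gap away from $t=0$ provided by \ref{Condi-NonLattice}; (iii) Fourier inversion with remainder $o(1/\sqrt{n})$; and (iv) the bound $\|\Theta\|_\gamma \leq C\|\varphi\|_\gamma$, which is where the hypothesis $\gamma \leq \min_{t \in J^+} \{ t, 1\}$ enters, to guarantee that $u\mapsto \langle f, u\rangle^s$ is $\gamma$-H\"older uniformly in $s\in J^+$.

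Uniformity in $(s, f, v, \varphi)$ follows from the analytic (hence continuous) dependence of $(\kappa(s), r_s, \nu_s, \sigma_s)$ on $s$, compactness of $J^+$ and $K$, and the linear dependence of all estimates on $\varphi$ through $\|\Theta\|_\gamma$. For part 2 the argument is the same with $s<0$, but now the weight $\|gv\|^s$ becomes singular where entries of $gv$ degenerate; the stronger \ref{Condi-FK} (rather than \ref{Condi-FK-weak}) is needed precisely to keep $P_s$ quasi-compact on $\scr B_{\gamma}$ and preserve its spectral gap, after which the Laplace integral runs over $(-\infty, 0]$ and yields $\int_{-\infty}^0 e^{-st}\, dt = 1/(-s)$, hence the factor $(-s\sigma_s\sqrt{2\pi n})^{-1}$ in \eqref{Petrov-Lower}. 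The main obstacle is establishing the uniform joint local limit theorem displayed above, with the sharp $o(1/\sqrt{n})$ error and the prescribed joint character with the chain state $G_n\cdot v$: it requires extending the spectral gap analysis simultaneously to the perturbed norm transfer operators \emph{and} to the coefficient cocycle $\sigma_f$, with uniformity across all compact parameter ranges.
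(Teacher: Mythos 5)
Your reduction is sound and in fact closer to the paper's than it may look: since $r_{s,f}(v)=r_s(v)/\langle f,v\rangle^s$, the kernel $q_{s,f,n}$ of the paper's coefficient-adapted measure $\bb Q^v_{s,f}$ coincides with the norm-cocycle kernel $q_n^s$, so your tilt $\bb Q_s^v$ is exactly the measure used in the paper, and your identity expressing the left-hand side as $r_s(v)e^{-n\Lambda^*(q)}\,\bb E_{\bb Q_s^v}\big[\Theta(G_n\cdot v)e^{-sU_n}\mathds 1_{\{U_n\geq 0\}}\big]$ with $\Theta=\varphi\,\langle f,\cdot\rangle^s/r_s$ reproduces, after unpacking $\pi_s(\Theta)$, the constants in \eqref{Petrov-Upper}--\eqref{Petrov-Lower}. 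The genuine gap is that the entire weight of the theorem sits in the display you call the ``uniform joint local limit theorem under the tilt'', with error $o(1/\sqrt n)$ uniform in $s\in J^{\pm}$, $f$, $v$ and linear in $\|\Theta\|_\gamma$ --- and you do not prove it; you yourself flag it as the main obstacle. That display is precisely the content of the paper's Sections \ref{Sec-SG-Norm}--\ref{sec:spec gap entries} and of \eqref{Thm_Edgeworth_Target_02}: one needs (a) the spectral theory of $P_s$ for \emph{all} $s\in(I_\mu^-)^\circ$, not just $s<0$ near $0$ (Proposition \ref{change-measure-neg01}, resting on Lemmas \ref{Lem-FK60} and \ref{lemma kappa 1}, which is where \ref{Condi-FK} really enters, not merely to ``keep $P_s$ quasi-compact''); (b) perturbed operators built on the coefficient cocycle ($R_{s,z,f}$, $R_{s,it,f}$) with dominant eigenvalue $\lambda_{s,z}$ independent of $f$, analyticity in $z$, a spectral gap and a non-arithmetic bound uniform in $f\in\mathcal S$ and $s$ in compacts (Propositions \ref{perturbation thm 02-f}, \ref{Sca expdecay pertur-f}, Lemma \ref{Lem-Bs}); and (c) the Edgeworth expansion with target function under the tilted measure. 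Saying the Nagaev--Guivarc'h scheme can be ``modified to incorporate the coefficient cocycle'' names this task rather than performing it, so as written the proposal is a correct plan, not a proof.

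Even granting that display, three further points would need attention. First, your test function $\psi(t)=e^{-st}\mathds 1_{[0,\infty)}(t)$ is discontinuous with unbounded support, so an LLT for smooth compactly supported $\psi$ does not directly yield the $o(1/\sqrt n)$-precise constant $1/(s\sigma_s\sqrt{2\pi n})$; the paper sidesteps the smoothing issue by writing $\int_0^\infty e^{-a_ny}\,dF_n(y)$, integrating by parts and inserting the Edgeworth expansion of $F_n$ (proof of Proposition \ref{Thm_Cocycle_01}). Second, for $s<0$ the function $\langle f,\cdot\rangle^s$ is unbounded on $\mathcal S$, so $\Theta\in\scr B_\gamma$ and $\|\Theta\|_\gamma\leq C\|\varphi\|_\gamma$ only hold after confining the chain to $\mathcal S_\epsilon$; this is exactly why the paper conditions on the first matrix $g_1$, absorbs the resulting vanishing shift $\ell_{f,v}(g_1)$ of $q$ using the uniformity in $s$ and the expansion \eqref{Def Jsl} of $\Lambda^*$, and produces a dominating function (via $r_s$ and the norm-cocycle upper bound) to apply dominated convergence --- a step absent from your write-up. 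Third, the statement is for $f,v$ in a compact $K\subset\bb R^d_+\setminus\{0\}$, and the passage from unit vectors again uses homogeneity plus the Cram\'er-series expansion of $\Lambda^*(q+l')$; routine, but it is part of the claimed uniformity and should be spelled out.
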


The uniformity in $s$ over compact sets is very important in various applications. 
For example, it allows us to derive from the above result the local limit theorem with large deviations, 
see \eqref{LLT-LD} of Theorem \ref{Thm_LLT_03}. 
It has important applications
in related models such as 
multitype  branching processes in random environment. 
It also plays a crucial role in \cite{MX22} 
to study limit theorems for exceedance times of multivariate perpetuity sequences arising from financial mathematics,
which generalizes the main results in \cite{BurCollDamZien2016} from the univariate case to the multivariate case.

Taking $\varphi = 1$ in \eqref{Petrov-Upper}, we get the following upper large deviation result for the coefficients $\langle f,  G_n v \rangle$:   
uniformly in $s \in J^+$ and $f, v  \in K$, with $q = \Lambda'(s)$ and $\sigma_s = \sqrt{\Lambda''(s)}$, 
\begin{align}\label{LD-BRP-Coeff}
\mathbb{P} \Big( \log \langle f,  G_n v \rangle \geq n q \Big)
= \frac{r_s(v) r_s^*(f)}{\nu_s(r_s)} \frac{ \exp(- n\Lambda^{*}(q)) }{ s\sigma_{s}\sqrt{2\pi n} }  \big[ 1 + o(1) \big].
\end{align}
A lower large deviation asymptotic can also be  obtained as a consequence of \eqref{Petrov-Lower}.

Since $\| G_n \| = \langle \mathbf 1, G_n \mathbf 1 \rangle$,  
taking $f = v = \mathbf 1$ in \eqref{LD-BRP-Coeff} and using the fact that $r_s(\mathbf 1) = r_s^*(\mathbf 1) = 1$, 
we get the following upper large deviation asymptotic
for the matrix norm $\|G_n\|$:   uniformly in $s \in J^+$, with $q = \Lambda'(s)$ and $\sigma_s = \sqrt{\Lambda''(s)}$, 
\begin{align}\label{LD-BRP-MatrixNorm}
\mathbb{P} \Big( \log \|G_n \| \geq n q \Big)
= \frac{1}{\nu_s(r_s)} \frac{ \exp(- n\Lambda^{*}(q)) }{ s\sigma_{s}\sqrt{2\pi n} }  \big[ 1 + o(1) \big].  
\end{align}
A similar lower large deviation asymptotic can also be formulated. 

Theorem \ref{Thm-Petrov-Uppertail} is proved
by developing a spectral gap theory for the new cocycle $\sigma_f(G_n, v)$ and by using the Edgeworth expansion 
under the changed probability measure 
for the couple $(G_n \!\cdot\! v, \sigma_f(G_n, v))$
with a target function on the Markov chain $(G_n \!\cdot\! v)$, see \eqref{Thm_Edgeworth_Target_02}. 

For invertible matrices, 
the corresponding upper large deviation asymptotic \eqref{Petrov-Upper}
has been recently established in \cite{XGL19d} using a different method based on 
the H\"older regularity of the eigenmeasure $\nu_s$ together with 
a smooth partition of unity of the projective space $\bb P^{d-1}$. 
The corresponding lower large deviation asymptotic \eqref{Petrov-Lower}
has also been proved in \cite{XGL19d}, but only for $s<0$ sufficiently close to $0$. 
It remains an open problem to prove  \eqref{Petrov-Lower}
for arbitrary $s\in (I_\mu^-)^\circ$ (in the case of invertible matrices).

\subsection{Applications}\label{Subsect-Application}

The following theorem presents three results for local probabilities for the coefficients $\langle f, G_n v \rangle$. 
The first is an "ordinary" local limit theorem, the second involves moderate deviations, 
and the third involves large deviations. 
Let $\zeta(t)$ be the Cram\'{e}r series of $\Lambda$  given by:
\begin{align*}
\zeta (t) = \frac{\gamma_{3} }{ 6 \gamma_{2}^{3/2} }  
  +  \frac{ \gamma_{4} \gamma_{2} - 3 \gamma_{3}^2 }{ 24 \gamma_{2}^3 } t
  +  \frac{\gamma_{5} \gamma_{2}^2 - 10 \gamma_{4} \gamma_{3} \gamma_{2} + 15 \gamma_{3}^3 }{ 120 \gamma_{2}^{9/2} } t^2
  +  \ldots 
\end{align*}
with $\gamma_{k} = \Lambda^{(k)} (0)$, cf.\ \cite{Pet75, XGL19b}. 
Note that $\gamma_{1} = \lambda_{\mu}$ and $\gamma_{2} = \sigma^2$. 

\begin{theorem}\label{Thm_LLT_03}
Assume \ref{Condi-FK-weak} and \ref{Condi-NonLattice}.  
Let $a_1 < a_2$ be real numbers and 
$K \subset \bb{R}^{d}_+ \setminus \{0\}$ be a compact set.

\noindent 1.  
If  $\int_{\bb G} \log^2 N(g) \mu(dg) < \infty$, 
then, as $n \to \infty$,  
uniformly in $f, v  \in K$, $|l| = o(\frac{1}{\sqrt{n}})$ and $\varphi \in \scr B_{1}$, 
\begin{align}\label{LLT-001}  
\mathbb{E} \left[ \varphi(G_n \!\cdot\! v) 
      \mathds 1_{ \left\{ \log \langle f,  G_n v \rangle - n \lambda_{\mu} \in   [a_1, a_2] + \sigma nl  \right\} }  \right] 
= \frac{ a_2 - a_1 }{\sigma \sqrt{2\pi n} } 
  \big[ \nu(\varphi)  + \| \varphi \|_{1}  o(1) \big].
\end{align}

\noindent 2.  
Let $(l_n)_{n \geq 1}$ be any sequence of positive numbers satisfying $l_n \to 0$ as $n \to \infty$.  
If $\int_{\bb G} N(g)^{\eta} \mu(dg) < \infty$ for some constant $\eta>0$,  
then,  as $n \to \infty$,  
 uniformly in $f, v  \in K$, $|l| \leq l_n$ and $\varphi \in \scr B_{1}$, 
\begin{align}\label{LLT-MD}  
&  \mathbb{E} \left[ \varphi(G_n \!\cdot\! v) 
      \mathds 1_{ \left\{ \log \langle f,  G_n v \rangle - n \lambda_{\mu} \in   [a_1, a_2] + \sigma nl  \right\} }  \right]  \notag\\
 & = \frac{a_2 - a_1}{\sigma \sqrt{2 \pi n}}  
  e^{ -\frac{nl^2}{2} +   n l^3 \zeta (l) }
  \big[ \nu(\varphi)  + \| \varphi \|_{1}  o(1) \big].
\end{align} 

\noindent 3.  
Let $J^+$ be a compact subset in $(I_\mu^+)^\circ$.       
Then, for any $0 < \gamma \leq \min_{t \in J^+} \{ |t|, 1\}$, 
 as $n \to \infty$,  
 uniformly in $s \in J^+$, $f, v  \in K$ and $\varphi \in \scr B_{\gamma}$, with $q = \Lambda'(s)$ and $\sigma_s = \sqrt{\Lambda''(s)}$, 
\begin{align}\label{LLT-LD}  
& \mathbb{E} \left[ \varphi(G_n \!\cdot\! v) \mathds 1_{ \left\{ \log \langle f,  G_n v \rangle  \in   [a_1, a_2] + nq  \right\} }  \right]  
   \notag\\
& = \frac{r_s(v)}{\nu_s(r_s)}  \frac{e^{-s a_1} - e^{-s a_2}}{s} 
   \frac{ \exp(- n\Lambda^{*}(q)) }{ \sigma_{s}\sqrt{2\pi n} }  
    \left[ \int_{\mathcal{S}} \varphi(u) \langle f, u \rangle^s \nu_s(du) + \| \varphi \|_{\gamma} o(1) \right]. 
\end{align}
Moreover, with the assumption \ref{Condi-FK} replacing \ref{Condi-FK-weak}, 
 the third assertion also holds when $J^+$ is replaced by a compact subset  $J^-$ in $(I_\mu^-)^\circ$. 
\end{theorem}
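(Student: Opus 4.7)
The plan is to treat the three parts via a unified Nagaev-Guivarc'h spectral framework, supplemented by an exponential change of measure for Parts 2 and 3. For Part 1, I would apply the spectral-gap theory for the coefficient cocycle $\sigma_f(G_n,v)=\log(\langle f,G_n v\rangle/\langle f,v\rangle)$ developed earlier in the paper. The family of perturbed transfer operators $P_{it}^f\varphi(v):=\bb E[\varphi(g_1\cdot v)(\langle f,g_1 v\rangle/\langle f,v\rangle)^{it}]$ admits, for small real $t$, a dominant simple eigenvalue $\kappa_f(it)=1+i\lambda t-\tfrac{\sigma^2}{2}t^2+o(t^2)$ with analytic spectral projection, while away from $t=0$, non-arithmeticity \ref{Condi-NonLattice} forces its spectral radius to stay strictly below $1$ on compact sets. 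A Fourier inversion against a smooth approximation of $\mathds 1_{[a_1,a_2]}$, combined with the trivial identity $\log\langle f,G_n v\rangle=\log\langle f,v\rangle+\sigma_f(G_n,v)$, then yields \eqref{LLT-001} under the optimal second moment condition.

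For Part 2, I would tilt by the eigenfunction $r_s$ of $P_s$: for each $l$ with $|l|\le l_n$, choose $s=s(l)$ so that $\Lambda'(s(l))=\lambda+\sigma l$. Since $l_n\to 0$, the parameter $s(l)$ lies in a shrinking neighborhood of $0$, and the exponential moment hypothesis ensures $s(l)\in (I_\mu^+\cup I_\mu^-)^\circ$. Rewriting the target expectation under the tilted probability measure $\bb Q_{s,v}$ (obtained from the Doob $h$-transform with $h=r_s$ and normalizing eigenvalue $\kappa(s)$) turns $\log\langle f,G_n v\rangle-n(\lambda+\sigma l)$ into a centered quantity to which a uniform-in-$s$ version of the LLT from Part 1 applies. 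Taylor-expanding $\Lambda$ around $0$ then produces the Cramér correction $e^{-nl^2/2+nl^3\zeta(l)}$ via $\Lambda^\ast(\lambda+\sigma l)=\tfrac{l^2}{2}-l^3\zeta(l)$, as in the scalar setting \cite{Pet75}.

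For Part 3, the same change-of-measure identity is used with $s$ fixed in the compact set $J^{\pm}$ and $q=\Lambda'(s)$. The decomposition $\log\langle f,G_n v\rangle=\log\|G_n v\|+\log\langle f,G_n\cdot v\rangle$ combined with $\kappa(s)^n e^{-snq}=e^{-n\Lambda^*(q)}$ gives
\begin{align*}
&\bb E\!\left[\varphi(G_n\cdot v)\mathds 1_{\{\log\langle f,G_n v\rangle-nq\in[a_1,a_2]\}}\right] \\
&\quad = e^{-n\Lambda^*(q)}r_s(v)\,\bb E_{\bb Q_{s,v}}\!\left[\frac{\varphi(G_n\cdot v)\langle f,G_n\cdot v\rangle^{s}}{r_s(G_n\cdot v)}\, e^{-s(\log\langle f,G_n v\rangle-nq)}\mathds 1_{\{\log\langle f,G_n v\rangle-nq\in[a_1,a_2]\}}\right].
\end{align*}
The factor $e^{-sy}$ for $y\in[a_1,a_2]$ is then integrated against the local density of $\log\langle f,G_n v\rangle-nq$ under $\bb Q_{s,v}$, which is asymptotically Gaussian with variance $\sigma_s^2 n$. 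I would make this rigorous via a tilted LLT with a target function on the Markov chain $(G_n\cdot v)$ obtained from a Nagaev-Guivarc'h analysis of $P_{s+it}^{f}$ near $t=0$, in the spirit of \eqref{Thm_Edgeworth_Target_02}. Using $\int_{a_1}^{a_2}e^{-sy}\,dy=(e^{-sa_1}-e^{-sa_2})/s$ together with the identification of the invariant measure of the tilted chain as $\pi_s(dv)=r_s(v)\nu_s(dv)/\nu_s(r_s)$ yields \eqref{LLT-LD} after the simplification $\int (\varphi\langle f,\cdot\rangle^s/r_s)\,d\pi_s=\nu_s(r_s)^{-1}\int \varphi(u)\langle f,u\rangle^s\nu_s(du)$.

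The main technical obstacle is establishing the local limit theorem under $\bb Q_{s,v}$ uniformly for $s$ in a compact subset of $(I_\mu^+)^\circ\cup(I_\mu^-)^\circ$ (and jointly with the target function $\varphi \in \scr B_\gamma$ with $\gamma\to 0$ as $s\to \partial I_\mu^\pm$). This requires showing that the spectral gap of the family $P_{s+it}^{f}$ is uniform in $(s,t)$ and that non-arithmeticity \ref{Condi-NonLattice} transfers from $s=0$ to arbitrary $s$ in the compact set---both being essential outputs of the spectral-gap theory for the coefficient cocycle built earlier in the paper. The passage from the Edgeworth-type asymptotic for smooth targets to the sharp indicator $\mathds 1_{[a_1,a_2]}$ is the standard smoothing step and presents no additional difficulty.
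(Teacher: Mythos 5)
Your overall strategy (spectral gap for the coefficient cocycle, Fourier inversion for the LLT, exponential tilting for the moderate and large deviation regimes) is the same machinery the paper uses, but there is one concrete gap: the passage from the cocycle $\sigma_f(G_n,v)=\log\frac{\langle f,G_nv\rangle}{\langle f,v\rangle}$ to the coefficient $\log\langle f,G_nv\rangle$ is not the ``trivial identity'' you claim. The theorem is uniform over $f,v$ in a compact $K\subset\bb R_+^d\setminus\{0\}$, and $K$ may contain pairs with $\langle f,v\rangle=0$ (e.g.\ $f=e_1$, $v=e_2$), or with $\langle f,v\rangle$ arbitrarily small; then the additive correction $\log\langle f,v\rangle$ is undefined or unbounded, and moreover the spectral theory you invoke (the operators $R_{it,f}$ of Lemma \ref{transfer operator_Pit-f}, and likewise $R_{s,z,f}$) is only available for starting points $v\in\mathcal S_\epsilon$, where $\langle f,\cdot\rangle\geq\epsilon$. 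The paper bridges this by conditioning on the first factor $g_1$: under \ref{Condi-FK-weak}, $g_1\cdot v\in\mathcal S_\epsilon$ (Lemma \ref{lem equiv Kesten}), the term $\log\langle f,g_1v\rangle$ is absorbed as a random $O(1/n)$ perturbation of the shift $l$ (the quantity $\ell_{f,v}(g_1)$ in \eqref{LLT_ScalCondi_02}), and this random perturbation is then controlled by the uniformity of the cocycle LLT in $|l|=O(1/\sqrt n)$ together with a dominated convergence argument based on \eqref{Bound-Inte-lg} and the uniform upper bound \eqref{Prop_LLT_UpperBound}; it is exactly at this point that the second moment condition does its work in Part 1. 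Without this step your argument does not yield uniformity over $K$ (nor even a well-defined statement at orthogonal pairs), so you should add it to all three parts.

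Apart from this, your plan is sound and close to the paper's. Two remarks on the comparison: (i) for Part 3 the paper does not redo a tilted LLT; it simply differences the uniform Bahadur--Rao--Petrov asymptotics of Theorem \ref{Thm-Petrov-Uppertail} at the levels $q+a_1/n$ and $q+a_2/n$, the uniformity in $s$ (equivalently in $q$) being the whole point; your change-of-measure computation essentially reproduces the proof of that theorem and would work, but it duplicates effort and still needs the conditioning step above. (ii) In Part 2 you tilt with the norm-cocycle kernel $\bb Q_{s}^{v}$; an LLT for the coefficient under that measure still requires the coefficient perturbed operators $R_{s,it,f}$ (Propositions \ref{perturbation thm 02-f} and \ref{Sca expdecay pertur-f}), so it is cleaner to tilt directly with $\bb Q_{s,f}^{v}$ as the paper does; you acknowledge the coefficient operators for Part 3 but Part 2 as written leans on the norm tilt only.
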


Note that the asymptotic \eqref{LLT-001} holds under the optimal
second moment condition $\int_{\bb G} \log^2 N(g) \mu(dg) < \infty$. 
Similar result for invertible matrices  was recently obtained in \cite{GQX20, XGL23SPA} under an exponential moment condition. 
The question is open whether \eqref{LLT-001} holds under the second moment assumption. 

Contrary to the ordinary local limit theorem \eqref{LLT-001}, the local limit theorem with moderate deviations \eqref{LLT-MD} is obtained 
under the exponential moment condition, which is expressed by the fact $\int_{\bb G} N(g)^{\eta} \mu(dg) < \infty$ for some constant $\eta>0$. 
For invertible matrices, the asymptotic \eqref{LLT-MD} is obtained recently in \cite{XGL23SPA}, also under an exponential moment condition,  
using a different approach 
via discretization techniques of the projective space $\bb P^{d-1}$ and via the application of the H\"older regularity of the invariant measure $\nu$.

The local limit theorem with large deviations \eqref{LLT-LD} holds under the exponential moment condition
$s \in J^+$, where $J^+ \subseteq (I_\mu^+)^\circ$.
This result is a consequence of Theorem \ref{Thm-Petrov-Uppertail},
where the uniformity in $s$ plays an important role. 
For invertible matrices, the asymptotic \eqref{LLT-LD} is obtained recently in \cite{XGL19d}, 
for $s \in (I_{\mu}^+)^{\circ}$ and for  $s \in (I_{\mu}^-)^{\circ}$ sufficiently close to $0$, using a different approach.

From Theorem \ref{Thm-Petrov-Uppertail}, 
one can deduce the following lower large deviation asymptotic for the vector norm $\|G_n v\|$. 

\begin{theorem}\label{Thm-Petrov-Norm-cocycle}
Assume \ref{Condi-FK} and \ref{Condi-NonLattice}. 
Let  $K \subset \bb{R}^{d}_+ \setminus \{0\}$ and $J^- \subseteq (I_\mu^-)^\circ$ be compact sets. 
Then, for any $0 < \gamma \leq \min_{t \in J^-} \{ |t|, 1\}$, as $n \to \infty$,  
uniformly in $s \in J^-$, $v  \in K$ and $\varphi \in \scr B_{\gamma}$, 
with $q = \Lambda'(s)$ and $\sigma_s = \sqrt{\Lambda''(s)}$, 
\begin{align}\label{Petrov-Lower-Norm-cocycle}
 \mathbb{E} \left[ \varphi(G_n \cdot v) \mathds 1_{ \left\{ \log \| G_n v \| \leq n q  \right\} }  \right]  
  = \frac{r_s(v)}{\nu_s(r_s)} \frac{ \exp(- n\Lambda^{*}(q)) }{ -s\sigma_{s}\sqrt{2\pi n} }  
    \big[ \nu_s(\varphi) + \| \varphi \|_{\gamma} o(1) \big]. 
\end{align}
\end{theorem}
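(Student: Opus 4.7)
The plan is to deduce Theorem \ref{Thm-Petrov-Norm-cocycle} directly from the second part of Theorem \ref{Thm-Petrov-Uppertail} by specializing the linear form $f$ to the all-ones vector $\bf 1 = (1, \ldots, 1) \in \bb R^d_+$. The whole content of the lower large deviation for $\log \|G_n v\|$ at an arbitrary exponent $q = \Lambda'(s)$ with $s \in (I_\mu^-)^\circ$ is thereby packed into the corresponding result for the coefficients $\langle \bf 1, G_n v\rangle$, which has already been established in Theorem \ref{Thm-Petrov-Uppertail}(2). No new spectral or Edgeworth estimate is needed; this is precisely the point mentioned in the introduction, that the coefficient-level asymptotic is the key tool for closing the open problem on the norm cocycle at arbitrary $q' < \lambda$.

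The key identity is that, since every $g_k$ has nonnegative entries and $v \in K \subset \bb R^d_+ \setminus \{0\}$, the vector $G_n v$ lies in $\bb R^d_+$, so the $L_1$-norm reduces to the pairing with $\bf 1$:
$$\|G_n v\| = \sum_{i=1}^d \langle e_i, G_n v\rangle = \langle \bf 1, G_n v\rangle.$$
Consequently $\{\log \|G_n v\| \leq nq\} = \{\log \langle \bf 1, G_n v\rangle \leq nq\}$ almost surely. I would then enlarge the given compact set $K$ to $K' := K \cup \{\bf 1\}$, still a compact subset of $\bb R^d_+ \setminus \{0\}$, and apply Theorem \ref{Thm-Petrov-Uppertail}(2) on $K'$ with the fixed choice $f = \bf 1$ (the hypotheses \ref{Condi-FK} and \ref{Condi-NonLattice} match exactly those of Theorem \ref{Thm-Petrov-Norm-cocycle}). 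This produces, uniformly in $s \in J^-$, $v \in K$ and $\varphi \in \scr B_\gamma$,
$$\bb E\bigl[\varphi(G_n \!\cdot\! v)\mathds 1_{\{\log\langle \bf 1, G_n v\rangle \leq nq\}}\bigr] = \frac{r_s(v)}{\nu_s(r_s)}\,\frac{\exp(-n\Lambda^{*}(q))}{-s\sigma_s\sqrt{2\pi n}}\Bigl[\int_{\mathcal S}\varphi(u)\langle \bf 1, u\rangle^s\,\nu_s(du) + \|\varphi\|_\gamma\, o(1)\Bigr].$$
The last step is to observe that every $u \in \mathcal S$ satisfies $\langle \bf 1, u\rangle = \sum_i u_i = \|u\| = 1$, so $\langle \bf 1, u\rangle^s \equiv 1$ on $\mathcal S$ and the integral collapses to $\nu_s(\varphi)$, giving exactly \eqref{Petrov-Lower-Norm-cocycle}.

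There is essentially no analytical obstacle, since the heavy lifting (spectral gap for the coefficient cocycle $\sigma_f(G_n, v)$ at negative exponents $s$, the change of measure associated with $\Lambda$, and the Edgeworth-type tail asymptotic under that changed measure) has already been carried out in the proof of Theorem \ref{Thm-Petrov-Uppertail}(2). The only small points to make explicit are that (i) the uniformity in $f$ inside Theorem \ref{Thm-Petrov-Uppertail} is compatible with freezing $f = \bf 1 \in K'$, because uniform asymptotics over $K'$ specialize to any single element of $K'$, and (ii) the enlargement $K \mapsto K'$ preserves compactness in $\bb R^d_+ \setminus \{0\}$, so the hypothesis "compact set" of Theorem \ref{Thm-Petrov-Uppertail} is met.
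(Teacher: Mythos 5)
Your argument is correct and is exactly the route the paper takes: specialize $f = \mathbf{1}$ in \eqref{Petrov-Lower} of Theorem \ref{Thm-Petrov-Uppertail}, use $\|G_n v\| = \langle \mathbf{1}, G_n v\rangle$, and note that $\langle \mathbf{1}, u\rangle = 1$ on $\mathcal S$ so the integral collapses to $\nu_s(\varphi)$. The small extra remarks about enlarging $K$ to $K \cup \{\mathbf{1}\}$ and freezing $f$ inside the uniform statement are sound and merely make explicit what the paper leaves implicit.
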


Theorem \ref{Thm-Petrov-Norm-cocycle} improves the result in  \cite{XGL19a} where 
the asymptotic \eqref{Petrov-Lower-Norm-cocycle} has been established 
for $s \in (I_\mu^-)^\circ$ sufficiently close to $0$.

We also obtain the following upper and lower large deviation bounds for the spectral radius $\rho(G_n)$.

\begin{theorem}\label{Thm_LDSpectPosi}  
\noindent 1.   
Assume  \ref{Condi-FK-weak} and \ref{Condi-NonLattice}. 
Let $J^+ \subseteq (I_\mu^+)^\circ$ be a compact set. 
Then there exist constants $0 < c < C < \infty$ such that
uniformly in $s \in J^+$, with $q = \Lambda'(s)$, 
\begin{align}\label{LD_Spec01}
c  & <   
\liminf_{n\to \infty} 
\sqrt{n}  \,  e^{ n \Lambda^*(q) }  \mathbb{P}  \big(\log \rho(G_n)  \geq nq \big)   \nonumber\\
& \leq  
\limsup_{n\to \infty} 
\sqrt{n}  \,  e^{ n \Lambda^*(q) }  \mathbb{P}  \big(\log \rho(G_n)  \geq nq \big) <  C. 
\end{align}

\noindent 2.  
Assume  \ref{Condi-FK} and \ref{Condi-NonLattice}. 
Let $J^- \subseteq (I_\mu^-)^\circ$ be a compact set. 
Then there exist constants $0 < c < C < \infty$ such that 
uniformly in $s \in J^-$, with $q = \Lambda'(s)$, 
\begin{align}\label{LD_Spec02}
c & <  
\liminf_{n\to \infty} 
\sqrt{n}  \,  e^{ n \Lambda^*(q) }  \mathbb{P}  \big(\log \rho(G_n) \leq nq \big)  \nonumber\\
& \leq    
\limsup_{n\to \infty} 
\sqrt{n}  \,  e^{ n \Lambda^*(q) }  \mathbb{P}  \big(\log \rho(G_n) \leq nq \big) <  C. 
\end{align} 
\end{theorem}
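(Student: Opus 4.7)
The plan is to reduce the large deviation asymptotics for $\rho(G_n)$ to those already obtained for the matrix norm $\|G_n\|$ by means of a deterministic sandwich. Applying \eqref{LD-BRP-MatrixNorm} and the lower-tail analog of Theorem \ref{Thm-Petrov-Uppertail}.2 (with $\varphi = 1$ and $f = v = \bf 1$, noting that $r_s(\bf 1) = r_s^*(\bf 1) = 1$ follows from the identity $r_s(v) = \int_{\mathcal S} \langle v, u \rangle^s \nu_s^*(du)$ together with $\|u\| = 1$ for $u \in \mathcal S$), both probabilities $\bb P(\log \|G_n\| \geq nq)$ and $\bb P(\log \|G_n\| \leq nq)$ are already known to be of order $n^{-1/2} e^{-n\Lambda^*(q)}$, with explicit prefactors that are continuous in $s$ on $(I_\mu^+)^\circ$ and $(I_\mu^-)^\circ$ respectively. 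It therefore suffices to establish the deterministic inequality
\[
\frac{1}{cd}\, \|G_n\| \;\leq\; \rho(G_n) \;\leq\; \|G_n\|, \qquad \forall\, n \geq 1.
\]

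The upper bound is immediate, since $\|\cdot\|$ is submultiplicative on the positive cone, hence $\rho(G_n) = \lim_k \|G_n^k\|^{1/k} \leq \|G_n\|$. For the lower bound I would apply the Collatz--Wielandt inequality $\rho(g) \geq \min_i (g \bf 1)_i$, valid for any nonnegative matrix $g$, together with the key observation that under \ref{Condi-FK-weak} every column of the product $G_n$ has entries comparable within factor $c$, uniformly in $n$. Indeed, in the expansion
\[
G_n^{i,j} = \sum_{k_1, \ldots, k_{n-1}} g_n^{i, k_{n-1}}\, g_{n-1}^{k_{n-1}, k_{n-2}} \cdots g_1^{k_1, j},
\]
only the factor $g_n^{i, k_{n-1}}$ depends on $i$, and \ref{Condi-FK-weak} applied to $g_n$ yields $g_n^{i, k_{n-1}} \leq c\, g_n^{i', k_{n-1}}$ for every $k_{n-1}$; summing gives $G_n^{i,j} \leq c\, G_n^{i',j}$ and then $(G_n \bf 1)_i \leq c\, (G_n \bf 1)_{i'}$ for all $i, i'$. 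Since $\sum_i (G_n \bf 1)_i = \|G_n\|$, the minimum component is at least $\|G_n\|/(cd)$, proving the sandwich.

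With the sandwich in place, part 1 follows from the inclusions
\[
\bigl\{\log \|G_n\| \geq nq + \log(cd)\bigr\} \;\subseteq\; \bigl\{\log \rho(G_n) \geq nq\bigr\} \;\subseteq\; \bigl\{\log \|G_n\| \geq nq\bigr\},
\]
and part 2 from the reversed inclusions applied to the lower-tail asymptotic; note that the stronger hypothesis \ref{Condi-FK} enters part 2 only because it is already required in Theorem \ref{Thm-Petrov-Uppertail}.2, not in the sandwich itself. Setting $q_n := q + \log(cd)/n$, a first-order Taylor expansion yields $n \Lambda^*(q_n) = n \Lambda^*(q) + s \log(cd) + o(1)$ as $n \to \infty$, since $(\Lambda^*)'(q) = s$; combined with the convergence of the prefactors this shows that $\bb P(\log \|G_n\| \geq nq_n)$ has the same exponential order as $\bb P(\log \|G_n\| \geq nq)$, differing by a bounded factor $(cd)^{-s}(1+o(1))$. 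The continuity of the resulting prefactors on $J^{\pm}$ then delivers the required uniform $\liminf$ and $\limsup$ bounds. I do not anticipate any substantial obstacle beyond the careful bookkeeping of the uniformity in $s$, which is already built into Theorem \ref{Thm-Petrov-Uppertail}.
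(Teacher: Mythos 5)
Your proof is correct, but it takes a genuinely different route from the paper for the lower bound in part 1 (and for the upper bound in part 2). The paper also starts from the Collatz--Wielandt formula, but it takes the test vector $e_1$ to get $\rho(G_n)\geq G_n^{1,1}$ and then applies the coefficient large deviation asymptotic \eqref{Petrov-Upper} (resp.\ \eqref{Petrov-Lower}) with $f=v=e_1$, so no threshold shift ever appears. You instead take the test vector $\mathbf 1$ and combine $\rho(G_n)\geq\min_i (G_n\mathbf 1)_i$ with the row-comparability $G_n^{i,j}\leq c\,G_n^{i',j}$, which under \ref{Condi-FK-weak} indeed follows from the last factor $g_n$ alone, to obtain the deterministic sandwich $\tfrac{1}{cd}\|G_n\|\leq\rho(G_n)\leq\|G_n\|$; this is a stronger and reusable fact (it transfers any limit theorem for $\log\|G_n\|$ to $\log\rho(G_n)$, and even identifies the prefactor up to the bounded factor $(cd)^{\mp s}$), and it only invokes the norm asymptotic \eqref{LD-BRP-MatrixNorm} and its lower-tail analogue rather than the full coefficient theorem. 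The price is the shifted threshold $nq+\log(cd)$: the asymptotic of Theorem \ref{Thm-Petrov-Uppertail} is stated at levels $q=\Lambda'(s)$, $s$ in a fixed compact set, so to apply it at $q_n=q+\log(cd)/n$ you should either re-parametrize, choosing $s_n$ with $\Lambda'(s_n)=q_n$ (which lies in a slightly enlarged compact subset of $(I_\mu^+)^\circ$ for $n$ large, since $\sigma_s^2=\Lambda''(s)>0$ is bounded below on $J^+$), or use the expansion \eqref{Def Jsl}; your Taylor computation $n\Lambda^*(q_n)=n\Lambda^*(q)+s\log(cd)+o(1)$ together with continuity of $s\mapsto r_s,\nu_s,\sigma_s$ then gives the uniform bounds, and this is exactly the vanishing-perturbation device the paper itself uses, so it is a routine step rather than a gap.
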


Theorem \ref{Thm_LDSpectPosi}  implies the following
large deviation principles for $\rho(G_n)$: under conditions \ref{Condi-FK-weak} and \ref{Condi-NonLattice}, 
uniformly in $s \in J^+$,  with $q = \Lambda'(s)$, 
\begin{align}\label{LDP-101}
\lim_{n \to \infty}  \frac{1}{n}  \log 
\mathbb{P} \big(  \log \rho(G_n)   \geq  n q  \big)
 = - \Lambda^*(q);
\end{align}
under conditions \ref{Condi-FK} and \ref{Condi-NonLattice}, 
uniformly in $s \in J^-$, with $q = \Lambda'(s)$, 
\begin{align}\label{LDP-102}
\lim_{n \to \infty}  \frac{1}{n}  \log 
\mathbb{P} \big(  \log \rho(G_n)   \leq  n q  \big)
 = - \Lambda^*(q).  
\end{align}

Theorem \ref{Thm_LDSpectPosi} is proved by using Theorem \ref{Thm-Petrov-Uppertail} 
and the Collatz-Wielandt formula for positive matrices.
For invertible matrices, much less is known. 
In this case, the Collatz-Wielandt formula does not hold, 
and the question of obtaining results similar to \eqref{LD_Spec01} and \eqref{LD_Spec02}  is still open.
The large deviation principles \eqref{LDP-101} and \eqref{LDP-102} for invertible matrices  
have been conjectured by Sert \cite{Ser19},  
and have been proved recently by Boulanger, Mathieu, Sert and  Sisto \cite{BMSS20} only 
 in the special case of products of $2\times 2$ matrices.

\section{Spectral gap theory for the norm cocycle} \label{Sec-SG-Norm}

The goal of this section is to develop the spectral gap theory for the transfer operator $P_{s}$ (cf.\ \eqref{transfoper001})
associated with the vector norm $\|G_n v\|$. This theory will be used later to establish the spectral gap properties of the transfer operator $P_{s,f}$, which is related to the coefficients $\langle f, G_n v \rangle$ and discussed in Section \ref{sec:spec gap entries}. The spectral gap properties of $P_s$ have numerous applications and have been studied in \cite{GL08, BDGM14, Gui15, GL16, BS2016} for $s \in I_{\mu}^+$, as well as in \cite{LeP82, BQ2017, XGL19b} for $s<0$ sufficiently close to $0$.
However, for arbitrary $s \in I_{\mu}^-$ (where $s<0$ may be far from $0$), 
the spectral gap properties of $P_{s}$ has not yet been developed in the literature. 
We shall study these properties 
 by following the approach from \cite{BDGM14, GL16}. 

\subsection{Preliminary statements}

Recall that the Hilbert cross-ratio metric $\mathbf d$ is defined by \eqref{Def-distance}. 
By \cite{Hen97}, 
there exists a constant $c >0$ such that for any $u, v\in \mathcal{S}$, 
\begin{align} \label{DistanPosi}
\|u-v\| \leq c \mathbf{d}(u,v), 
\end{align}
and, under condition \ref{Condi-FK}, there exists a constant $0 < a <1$ such that 
for any $g \in \Gamma_{\mu}$ and $u, v\in \mathcal{S}$, 
\begin{align} \label{Contracting inequa}
\mathbf{d}(g\cdot u, g\cdot v)\leq a \mathbf{d}(u,v). 
\end{align}
For $\epsilon \in (0,1)$, let
\begin{align*}
\mathcal{S}_{\epsilon}
= \left\{ v \in \mathcal{S}:  \langle f, v \rangle \geq \epsilon \ \mbox{for all $f \in \mathcal{S}$} \right\}. 
\end{align*}
Below we give equivalent formulations of condition \ref{Condi-FK} and \ref{Condi-FK-weak}. 
The proof is similar to that in \cite[Lemma 3.2]{XGL19c}, where the difference is that,   
 instead of the $L_1$-vector norm, the $L_2$-vector norm $\|v\| =  \sqrt{ \sum_{i=1}^d \langle v, e_i \rangle^2 }$ is considered in \cite{XGL19c}. 
For any set $B \subseteq \mathcal{S}$, we denote $g \cdot B = \{ g \cdot v: v \in B \}$.

The following lemma taken from \cite[Lemma 2]{FK60} says that if \eqref{Inequality-FK60} holds 
for $g \in \supp \mu$, 
then it also holds for $g \in \cup_{n=1}^{\infty} (\supp \mu)^n$ (for some different constant).

\begin{lemma}[\cite{FK60}]  \label{Lem-FK60}
Assume that \eqref{Inequality-FK60} holds for every $g \in \supp \mu$. 
Then there exists a constant $\varkappa'_0 > 1$ such that for any 
 $g \in \cup_{n=1}^{\infty} (\supp \mu)^n$, 
\begin{align*} 
g>0 \quad \mbox{ and } \quad  
\frac{\max_{1\leq i, j\leq d}  g^{i,j} }{ \min_{1\leq i,j\leq d} g^{i,j} } \leq  \varkappa'_0.
\end{align*}
\end{lemma}

\begin{lemma} \label{lem equiv Kesten}
Assume that the matrix $0$ does not belong to $\Gamma_{\mu}$. 
Then the following four conditions are equivalent: 
\begin{enumerate}

\item 
\eqref{Inequality-FK60} holds for $\mu$-almost every $g \in \bb G$. 

\item 
\eqref{Inequality-FK60} holds for all $g \in \supp \mu$.

\item 
\eqref{Inequality-FK60} holds for all $g \in \Gamma_{\mu}$.

\item 
There exists a constant $\epsilon \in (0, 1)$ such that
\begin{align} \label{equicon A4-bb}
g \cdot \mathcal{S} \subseteq \mathcal{S}_{\epsilon} 
\quad  \mbox{and} \quad  g^{{\mathrm T}} \cdot \mathcal{S} \subseteq \mathcal{S}_{\epsilon}
\quad  \mbox{for $g \in \supp \mu$}. 
\end{align}

\end{enumerate}

\end{lemma}

\begin{proof}
(a) We first prove that (1) $\Leftrightarrow$ (2). 
Notice that (2) implies (1) since $\mu (\supp \mu) =1$.
 So we just need to prove that (1) implies (2). 
For this, we take $g \in \supp \mu$. Let 
$\bb G_1 = \{g \in \bb G:  g \mbox{ satisfies \eqref{Inequality-FK60}} \}$. 
Then $\mu (\bb G_1) = 1$. 
For each $n \geq 1$, there is a ball $B(g, 1/n) = \{ g' \in \bb G: \|g' - g\| < 1/n \}$ with center $g$ and radius $1/n$
such that $\mu (B(g, 1/n)) > 0$. Therefore, there is $g_n \in B(g, 1/n) \cap \bb G_1$ such that $g_n$ satisfies \eqref{Inequality-FK60}
(with the same constant $\varkappa_0$). 
As $g_n \to g$,  by passing to the limit in $g_n^{i,j} \leq \varkappa_0 g_n^{k,l}$, we see that $g$ also satisfies the same inequality $g^{i,j} \leq \varkappa_0 g^{k,l}$ for all $1 \leq i, j, k, l \leq d$. 
With this and the condition $0 \notin \Gamma_{\mu}$ (so  $g \neq 0$),  we conclude that $g>0$. 
So far, we have shown that \eqref{Inequality-FK60} holds for every $g \in \supp \mu$. 
Hence, we have proved that (1) $\Leftrightarrow$ (2). 

(b) We next prove that (2) $\Leftrightarrow$ (3). 
Clearly (3) implies (2). So we just need to prove that (2) implies (3). 
For any $g \in \Gamma_{\mu}$, there is a sequence $g_n \in (\supp \mu)^n$, $n \geq 1$, such that $g_n \to g$.
By Lemma \ref{Lem-FK60},  we see that $g_n$ satisfies \eqref{Inequality-FK60}
with $\varkappa_0$ replaced by some constant $\varkappa_0' >0$. Using the same argument as above, 
we conclude that \eqref{Inequality-FK60} still holds for $g \in \Gamma_{\mu}$, again with the constant $\varkappa_0'$. 
This ends the proof of the equivalence (2) $\Leftrightarrow$ (3).

(c) We finally prove that (2) $\Leftrightarrow$ (4). 
To do this, we first show that (4) implies (2).
Using $g \cdot \mathcal{S} \subseteq \mathcal{S}_{\epsilon}$ and the definition of $\mathcal{S}_{\epsilon}$, 
there exists $\epsilon \in (0, 1)$ such that 
 for any $g = ( g^{i,j} )_{ 1\leq i, j \leq d } \in \supp \mu$ 
and any $1\leq i, j  \leq d$, 
\begin{align} \label{equilem 01}
\epsilon \leq  \langle e_i, g \cdot e_j \rangle
=  \frac{ \langle e_i, g e_j \rangle }{\|g e_j\|} 
=  \frac{g^{i,j}}{ \sum_{i=1}^d \langle e_i, g e_j \rangle }
= \frac{g^{i,j}}{ \sum_{i=1}^d g^{i,j} }.
\end{align}
It follows that for any $g \in \supp \mu$, 
\begin{align}\label{g-S-epsilon-a}
g>0 \quad \mbox{ and } \quad  
\epsilon \leq \frac{ \min_{1 \leq i \leq d} g^{i,j} }{ \sum_{i=1}^d g^{i,j} }  \leq  \frac{ \min_{1 \leq i \leq d} g^{i,j} }{ \max_{1 \leq i \leq d} g^{i,j} } \quad \forall  j \in \{1, \ldots,  d\}. 
\end{align}
Applying this with $g^{{\mathrm T}}$ instead of $g$, we see that $g^{{\mathrm T}} \cdot \mathcal{S} \subseteq \mathcal{S}_{\epsilon}$
implies that for any $g \in \supp \mu$, 
\begin{align}\label{g-S-epsilon-b}
g>0 \quad \mbox{ and } \quad  
\epsilon \leq \frac{ \min_{1 \leq i \leq d} g^{j, i} }{ \sum_{i=1}^d g^{j,i} }  \leq  \frac{ \min_{1 \leq i \leq d} g^{j, i} }{ \max_{1 \leq i \leq d} g^{j, i} } \quad \forall  j \in \{1, \ldots,  d\}. 
\end{align}
With \eqref{g-S-epsilon-a} and \eqref{g-S-epsilon-b}, we conclude that \eqref{Inequality-FK60} holds for all $g \in \supp \mu$
with $\varkappa_0 = 1/ \epsilon$.

We then prove that (2) implies (4). 
For any $v = \sum_{j=1}^d v_j e_j \in \mathcal{S}$ with $v_j\geq 0$ and $\sum_{j=1}^d v_j =1$,
any $g = ( g^{i,j} )_{ 1\leq i, j \leq d } \in \bb G$ with $gv \neq 0$, and any $1 \leq i \leq d$, 
 it holds that 
\begin{align*}
\langle e_i, g\!\cdot\! v \rangle 
= \frac{1}{\|gv\|} \sum_{j=1}^d v_j \langle e_i, g e_j \rangle  
=  \frac{ \sum_{j=1}^d  v_j  g^{i,j}  }{ \sum_{i=1}^d  \sum_{j=1}^d  v_j   g^{i,j}   }.
\end{align*}
By (2), there exists a constant $\varkappa_0 >1$ such that for any $1 \leq i \leq d$,
\begin{align*}
\frac{ \sum_{j=1}^d  v_j  g^{i,j}  }{ \sum_{i=1}^d  \sum_{j=1}^d  v_j   g^{i,j}   }
\geq  \frac{ \sum_{j=1}^d  v_j  \min_{1 \leq i \leq d} g^{i,j}  }{ d  \sum_{j=1}^d  v_j  \max_{1 \leq i \leq d} g^{i,j}   }
\geq  \frac{1}{\varkappa_0 d}, 
\end{align*}
for every $g \in \supp \mu$, 
so that $g \cdot \mathcal{S} \subseteq \mathcal{S}_{\epsilon}$ holds with $\epsilon =  \frac{1}{\varkappa_0 d}$. 
In the same way, we can prove that $g^{{\mathrm T}} \cdot \mathcal{S} \subseteq \mathcal{S}_{\epsilon}$
holds for every $g \in \supp \mu$. 
\end{proof}

With the same argument, we can prove the following result on the weaker version of the Furstenberg-Kesten condition.

\begin{lemma}\label{Lem-equiv-weak-FK}
Assume that the matrix $0$ does not belong to $\Gamma_{\mu}$. 
Then, the following four conditions are equivalent: 

\begin{enumerate}

\item 
\eqref{Inequality-FK-weak}  holds for $\mu$-almost every $g \in \bb G$. 

\item 
\eqref{Inequality-FK-weak} holds for all $g \in \supp \mu$.

\item
\eqref{Inequality-FK-weak} holds for all $g \in \Gamma_{\mu}$.

\item
There exists a constant $\epsilon \in (0, 1)$ such that
\begin{align} \label{equicon A4}
g \cdot \mathcal{S} \subseteq \mathcal{S}_{\epsilon}
\quad \mbox{for $g \in \supp \mu$.}
\end{align}

\end{enumerate}
\end{lemma}

Since $\nu$ is the unique invariant probability measure of the Markov chain $(G_n \cdot v)$, 
by \eqref{def-invariant-mea} and Lemma \ref{lem equiv Kesten}, 
 under condition \ref{Condi-FK-weak} (or stronger condition \ref{Condi-FK}), 
it holds that $\nu (\mathcal S_{\epsilon}) = 1$. 
Since $\mathcal S_{\epsilon}$ is closed, this implies that
\begin{align}\label{inclusion-supp-mu}
\supp\nu  \subseteq \mathcal S_{\epsilon}.
\end{align}

Denote by $V(\Gamma_{\mu})$ the unique minimal closed $\Gamma_{\mu}$-invariant subset of $\mathcal{S}$. 
By \cite[Lemma 4.2]{BDGM14} and \ref{Condi-FK-weak}, it holds that $V(\Gamma_{\mu}) = \supp\nu$.

\subsection{Spectral gap properties of the perturbed operator $R_{it}$}\label{Sec-spec-gap-Rit}
In this section we study the perturbed operator $R_{it}$ under  polynomial moment conditions  only.
These properties will be important in proving the Berry-Esseen theorem and the first-order Edgeworth expansion. 

For any $t \in \bb R$, 
define the perturbed operator $R_{it}$ as follows: for $\varphi\in \mathcal{C(S)}$, 
\begin{align} \label{def R tu 01-it}
R_{it}\varphi(v) = \mathbb{E} \left[ e^{ it( \log \|g v\| - \lambda_{\mu} ) } \varphi(g \cdot v) \right],  
\quad v \in \mathcal{S}.  
\end{align}
When $t = 0$, we have $R_0 = P$, where $P$ is defined by \eqref{def-transition-oper-P}. 
Applying \eqref{cocycle02} and the induction argument, 
one can check that for any $n\geq 1$,
\begin{align*}
R_{it}^n \varphi(v) =\mathbb{E} \left[ e^{ it( \log \|G_n v\| - nq ) } \varphi(G_n \cdot v)  \right],
\quad v \in \mathcal{S}. 
\end{align*}
The following result gives the spectral gap properties of $R_{it}$ 
and a third-order expansion for the eigenvalue $\lambda_{it}$ as $t \to 0$, under the third moment condition. 
Recall that $m_3$ is defined by \eqref{Def-m3}.

\begin{lemma}  \label{transfer operator_Pit} 

\noindent 1.   
Assume \ref{Condi-FK-weak}.      
Then there exists a constant $\delta>0$ such that for any $t \in (-\delta, \delta)$ and $n \geq 1$, 
\begin{align}\label{Pzn-decom_Pit}
R_{it}^n = \lambda_{it}^n \Pi_{it} + N_{it}^n,
\end{align}
where $\Pi_{it}$ is a rank-one projection on $\scr B_{1}$ satisfying
$\Pi_0 \varphi(v) = \nu(\varphi)$
for any $\varphi \in \scr B_{1}$ and $v \in \mathcal S$.  
Moreover, $\Pi_{it} N_{it} = N_{it} \Pi_{it} =0$ and $\rho(N_{it}) < 1$ for all $t \in (-\delta, \delta)$. 

\noindent 2.   
Assume \ref{Condi-FK-weak} and $\int_{\bb G} \log^3 N(g) \mu(dg) < \infty$.       
Then $\lambda_{it} = 1 - \frac{ \sigma^2 }{2} t^2 +  \frac{m_3}{6} (it)^3 + o(t^3)$ as $t \to 0$,
where $\lambda_{it}$ satisfies $\frac{d \lambda_{it}}{dt} |_{t =0} = 0$ and 
\begin{align*}
&\sigma^2
= - \frac{d^2 \lambda_{it}}{dt^2} \Big|_{t =0} = \lim_{n \to \infty} \frac{1}{n} \int_{\mathcal S} \bb E  \big[ (\log \|G_n v\| - n \lambda_{\mu})^2 \big] \nu(dv),  \notag\\
& 
m_3
= i \frac{d^3 \lambda_{it}}{dt^3} \Big|_{t =0} 
= \lim_{n \to \infty} \frac{1}{n} \int_{\mathcal S} \big[ (\log \|G_n v\| - n \lambda_{\mu})^3 \big] \nu(dv). 
\end{align*}
\end{lemma}

\begin{proof}
The first part of the lemma is proved in \cite[Theorem 3.3]{HH08} 
 under weaker conditions 
which are implied  by \ref{Condi-FK-weak}.   

Now we give a proof of the second part. 
By \eqref{Pzn-decom_Pit}, we have $R_{it} \Pi_{it} = \lambda_{it} \Pi_{it}$. 
Taking derivative with respect to $t$ at $0$, we get 
\begin{align*}
\frac{d R_{it}}{dt} \big|_{t=0} 1 +  P  \frac{d \Pi_{it} }{dt} \big|_{t=0} 1 
= \frac{d \lambda_{it}}{dt} \big|_{t =0}  +  \frac{ d \Pi_{it} }{dt} \big|_{t =0} 1, 
\end{align*}
with $P$ defined by \eqref{def-transition-oper-P}.
Integrating with respect to $\nu$, using \eqref{formu-lambda-mu} and the fact that $\nu P = \nu$ (cf.\ \eqref{def-invariant-mea}), we obtain 
\begin{align}\label{derivative-lambda-0}
\frac{d \lambda_{it}}{dt} \big|_{t =0}  = i \left(  \int_{\mathcal S} \int_{\bb G}  \log \|g v\|  \mu(dg)  \nu(dv)  - \lambda_{\mu}  \right)= 0. 
\end{align}
Since $\Pi_{it}^2 = \Pi_{it}$, 
we have $2 \Pi_0  \frac{d \Pi_{it} }{dt} \big|_{t=0} = \frac{d \Pi_{it} }{dt} \big|_{t=0}$ so that $\nu(\frac{d \Pi_{it} }{dt} \big|_{t=0} 1) = 0$, by using the fact that $\nu \Pi_0 = \nu$. 
From \eqref{Pzn-decom_Pit} we have 
\begin{align*}
\bb E_{\nu}  e^{it (\log \|G_n v\| - n \lambda_{\mu}) } 
= \lambda_{it}^n \nu(\Pi_{it} 1) + \nu (N_{it}^n 1), 
\end{align*}
where, for brevity, we use the notation $\bb E_{\nu}$ to mean that the starting point $v$ has the law $\nu$. 
Therefore, by Taylor's formula, we get that for sufficiently small $t$, 
\begin{align*}
\bb E_{\nu}  e^{it (\log \|G_n v\| - n \lambda_{\mu} ) } 
& =  1 +  it \bb E_{\nu}  (\log \|G_n v\| - n \lambda_{\mu} ) 
  - \frac{t^2}{2}  \bb E_{\nu}  \big[ (\log \|G_n v\| - n \lambda_{\mu} )^2 \big]  \notag\\
  & \quad  -  i  \frac{t^3}{6}  \bb E_{\nu}  \big[ (\log \|G_n v\| - n \lambda_{\mu} )^3 \big]  +  o(t^3),   \notag\\
\lambda_{it}^n & =  1 + n \frac{d^2 \lambda_{it}}{dt^2} \Big|_{t =0} \frac{t^2}{2} + n \frac{d^3 \lambda_{it}}{dt^3} \Big|_{t =0} \frac{t^3}{6} +  n \, o(t^3),   \notag\\
\nu(\Pi_{it} 1) & =  1  +  c_1  t^2  + c_2 t^3 +  o(t^3). 
\end{align*}
It follows that 
\begin{align*}
\bb E_{\nu}  \big[ (\log \|G_n v\| - n \lambda_{\mu} )^2 \big]  & = - n \frac{d^2 \lambda_{it}}{dt^2} \Big|_{t =0}  - c_1 + O(a^n),   \notag\\
\bb E_{\nu}  \big[ (\log \|G_n v\| - n \lambda_{\mu} )^3 \big]  &  = i  n \frac{d^3 \lambda_{it}}{dt^3} \Big|_{t =0} + 6 i c_2  + O(a^n), 
\end{align*}
which completes the proof of the lemma. 
\end{proof}

The following result shows that both functions $b$ and $d$ defined in \eqref{drift-b001}
are Lipschitz continuous on $\mathcal{S}$. 

\begin{lemma}\label{Lem-B0}
\noindent 1.  
Assume \ref{Condi-FK-weak} and $\int_{\bb G} \log N(g) \mu(dg) < \infty$.  
Then $b(\cdot) \in \scr B_{1}$ and
\begin{align} \label{Def2-b0}
b(v) = -i \left. \frac{ d \Pi_{it} }{ dt } \right|_{t=0} 1(v),  \quad  v \in \mathcal{S}. 
\end{align}

\noindent 2.  
Assume \ref{Condi-FK-weak}. 
Then $d(\cdot) \in \scr B_{1}$. 
\end{lemma}

\begin{proof}
We start by proving the first part of the lemma. 
By Lemma \ref{transfer operator_Pit}, we have that for any $t \in \bb R$ and $n \geq 1$,  
\begin{align*}
\mathbb{E} \left[ e^{it (\log \|G_n v\| - n \lambda_{\mu} ) }  \right]
=  \lambda_{it}^n \Pi_{it}  1(v) + N^{n}_{it} 1(v),  \    v \in \mathcal{S}.
\end{align*}
Since $\lambda_{0} = 1$ and $\frac{d \lambda_{it}}{dt} \big|_{t =0} =0$ (by \eqref{derivative-lambda-0}), 
differentiating both sides of the above equation with respect to $t = 0$ gives that for any $v \in \mathcal{S}$, 
\begin{align}\label{b0-01}
i \mathbb{E}  (\log \|G_n v\| - n \lambda_{\mu} ) 
 = \left. \frac{ d \Pi_{it} }{ dt } \right|_{t=0} 1(v)
   + \left.  \frac{ d N^{n}_{it} }{ dt } \right|_{t=0} 1(v).
\end{align}
Using again Lemma \ref{transfer operator_Pit},
we see that the first term on the right-hand side of \eqref{b0-01}
belongs to $\scr B_{1}$, and the second term converges to $0$ exponentially fast as $n \to \infty$.
Letting $n \to \infty$ in \eqref{b0-01}, we obtain \eqref{Def2-b0}.

Now we show that second part of the lemma. 
For any $u \in \mathcal{S}_{\epsilon}$, the function $\log \langle \cdot, u \rangle$ is Lipschitz continuous on $\mathcal{S}$. 
By \eqref{inclusion-supp-mu}, we have $\supp \nu \subseteq \mathcal{S}_{\epsilon}$
and hence $d(\cdot)$ is Lipschitz continuous on $\mathcal{S}$. 
\end{proof}

\subsection{Spectral gap properties of the transfer operator $P_s$}\label{Sect-Spec-norm-cocycle}

The following result gives the existence of the eigenfunctions and eigenmeasures
of the transfer operators $P_s$ and $P_s^*$ under exponential moments.
The uniqueness will be shown separately in Lemma \ref{unique r su et nu su}.

For $\epsilon \in (0,1)$ being from Lemma \ref{lem equiv Kesten},
let $\mathcal{C}(\mathcal{S}_\epsilon)$ be the space of complex-valued continuous functions on $\mathcal{S}_\epsilon$,
and let $\mathcal{P(S)}$ (resp. $\mathcal{P(S_\epsilon)}$) be the set of Borel probability measures on $\mathcal{S}$ (resp. $S_\epsilon$). 
Recall that $I_{\mu} = I_{\mu}^+ \cup I_{\mu}^-$.

\begin{proposition} \label{change-measure-neg01}
Assume either \ref{Condi-FK} when $s \in I_{\mu}^-$, or \ref{Condi-FK-weak}  when $s \in I_{\mu}^+$. 
Let  $\kappa(s)=\rho(P_s)$, $s \in  I_{\mu}$.
Then, there exist a constant $\epsilon \in (0,1)$,  an  eigenfunction $r_{s} \in \mathcal{C}(\mathcal{S}_\epsilon)$ 
and an eigenmeasure $\nu_s \in \mathcal{P(S_\epsilon)}$ 
with $V(\Gamma_\mu)\subseteq \supp \nu_{s}$ such that 
\begin{align*}
P_s r_s = \kappa(s) r_s \quad \mbox{and} \quad P_s \nu_s = \kappa(s) \nu_s. 
\end{align*}
Besides,  $\rho(P_s^*) = \kappa(s)$ and there exist a constant $\epsilon \in (0,1)$, an 
eigenfunction $r_s^* \in \mathcal{C}(\mathcal{S}_\epsilon)$  
and an eigenmeasure $\nu_s^* \in \mathcal{P(S_\epsilon)}$ such that 
\begin{align*}
P_s^* r_{s}^* = \kappa(s) r_{s}^* \quad \mbox{and}  \quad P_s^*  \nu_{s}^* = \kappa(s) \nu_{s}^*. 
\end{align*}
Moreover, the eigenfunctions $r_{s}$ and $r_{s}^*$ are given by 
\begin{align} \label{Def-rs-neg}
r_{s}(v)   = \int_{\mathcal{S}}  \frac{1}{ \langle v, u \rangle^{-s} } \nu^*_{s}(du),   \quad
r_{s}^*(v)   = \int_{\mathcal{S}}  \frac{1}{ \langle v, u \rangle^{-s} } \nu_{s}(du),  \quad v \in \mathcal{S},  
\end{align}
which are strictly positive and $\bar{s}$-H\"{o}lder continuous
with respect to the distance $\mathbf{d}$, with $\bar{s}=\min\{1, |s|\}$.
In addition, there exist constants $c_s, c_s' >0$ such that for any $n \geq 1$, 
\begin{align}\label{Inequa-kappa-s}
 c_s \mathbb{E}  (\| G_n \|^s)  \leq \kappa(s)^n  \leq  c_s' \mathbb{E} (\| G_n \|^s). 
\end{align}  
In particular, we have $\kappa(s) = \lim_{n\to\infty}\left(\mathbb{E}\| G_n \|^{s}\right)^{\frac{1}{n}}$. 
\end{proposition}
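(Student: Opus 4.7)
The plan is to set up the spectral theory of $P_s$ and $P_s^*$ on the Banach space $\scr B_{\bar s}$ with $\bar s = \min\{1, |s|\}$, via an Ionescu-Tulcea-Marinescu quasi-compactness theorem, and then to read off the explicit formulas \eqref{Def-rs-neg} and the comparison \eqref{Inequa-kappa-s} from the eigenrelations. The main obstacle is the case $s < 0$, which has not been treated before in the literature, since the weight $\|g v\|^s$ is unbounded in $g$. The key observation, coming from condition \ref{Condi-FK} through Lemma \ref{lem equiv Kesten}, is that both $g \!\cdot\! \mathcal{S} \subseteq \mathcal{S}_\epsilon$ and $g^{\mathrm T} \!\cdot\! \mathcal{S} \subseteq \mathcal{S}_\epsilon$, which forces the ratios $\|g v\|/\|g\|$ and $\|g^{\mathrm T} v\|/\|g\|$ to lie between positive constants depending only on $d$ and $c$. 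Hence, for every $s \in I_\mu^+ \cup I_\mu^-$ and $v \in \mathcal S$,
\begin{align*}
P_s \mathbf{1}(v) = \mathbb{E}\bigl[ \|g_1 v\|^s \bigr] \asymp \mathbb{E}\bigl[ \|g_1\|^s\bigr] < \infty,
\end{align*}
with an analogous bound for $P_s^*$, so both operators act boundedly on $\mathcal{C}(\mathcal{S})$.

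First I would establish a Doeblin-Fortet inequality on $\scr B_{\bar s}$: combining the Birkhoff contraction \eqref{Contracting inequa}, $\mathbf{d}(g \!\cdot\! u, g \!\cdot\! v) \leq a\, \mathbf{d}(u,v)$ with $a < 1$, with the $\bar s$-Hölder regularity of $v \mapsto \|g v\|^s$ on $\mathcal{S}$ (which uses the uniform lower bound $\langle f, g v\rangle \geq \epsilon\, \|g v\|$ coming from \ref{Condi-FK}), one obtains, for some $n_0 \geq 1$ and $\eta < 1$,
\begin{align*}
[P_s^{n_0} \varphi]_{\bar s} \leq \eta\, \kappa(s)^{n_0}\, [\varphi]_{\bar s} + C \|\varphi\|_\infty,
\end{align*}
where $\kappa(s)^{n_0}$ controls the operator norm of $P_s^{n_0}$ on $\mathcal{C}(\mathcal{S})$. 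By Ionescu-Tulcea-Marinescu, $P_s$ is quasi-compact on $\scr B_{\bar s}$. Positivity of $P_s$ combined with the minimality of $V(\Gamma_\mu) = \supp \nu$ then forces the leading eigenvalue $\kappa(s)$ to be simple and dominant, producing a strictly positive eigenfunction $r_s \in \scr B_{\bar s}$. The corresponding eigenmeasure $\nu_s \in \mathcal P(\mathcal{S}_\epsilon)$ is obtained by the Schauder-Tychonoff fixed-point argument applied to $\nu \mapsto P_s^* \nu / (P_s^* \nu)(\mathbf{1})$ on the compact convex set $\mathcal P(\mathcal{S}_\epsilon)$; the inclusion $V(\Gamma_\mu) \subseteq \supp \nu_s$ follows from semigroup invariance. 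The pair $(r_s^*, \nu_s^*)$ for $P_s^*$ is constructed symmetrically, and the identification $\rho(P_s) = \rho(P_s^*) = \kappa(s)$ is then immediate.

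To verify the explicit formula \eqref{Def-rs-neg}, I would set $\widetilde r_s(v) := \int_{\mathcal S} \langle v, u\rangle^s\, \nu_s^*(du)$ and use the algebraic identity $\|g v\|\, \langle g \!\cdot\! v, u\rangle = \langle v, g^{\mathrm T} u\rangle = \|g^{\mathrm T} u\|\, \langle v, g^{\mathrm T} \!\cdot\! u\rangle$ to compute
\begin{align*}
P_s \widetilde r_s(v)
= \int_{\Gamma_\mu} \!\int_{\mathcal S} \|g^{\mathrm T} u\|^s \langle v, g^{\mathrm T} \!\cdot\! u\rangle^s\, \nu_s^*(du)\, \mu(dg)
= \int_{\mathcal S} P_s^* \bigl(\langle v, \cdot\rangle^s\bigr)(u)\, \nu_s^*(du)
= \kappa(s)\, \widetilde r_s(v).
\end{align*}
By uniqueness of the eigenfunction, $\widetilde r_s$ coincides with $r_s$ up to a scalar. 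Strict positivity is then immediate from $\nu_s^* \in \mathcal P(\mathcal{S}_\epsilon)$ and $\langle v, u\rangle > 0$ for $v \in \mathcal S$, $u \in \mathcal{S}_\epsilon$; the $\bar s$-Hölder regularity follows from $|\langle v, u\rangle^s - \langle v', u\rangle^s| \leq C_s\|v - v'\|^{\bar s}$ on $\{\langle v, u\rangle \geq \epsilon\}$, combined with \eqref{DistanPosi}. The formula for $r_s^*$ is obtained analogously.

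Finally, for \eqref{Inequa-kappa-s} I would iterate the eigenrelation at $\varphi = \mathbf{1}$: since $r_s$ is continuous and strictly positive on the compact set $\mathcal{S}$, it is pinched between two positive constants, so
\begin{align*}
\kappa(s)^n r_s(v) = P_s^n r_s(v) \asymp P_s^n \mathbf{1}(v) = \mathbb{E}\bigl[\|G_n v\|^s\bigr],
\end{align*}
uniformly in $v \in \mathcal S$. Combined with $\|G_n v\| \asymp \|G_n\|$ uniformly in $v$ (from the comparability of columns of $G_n$ under \ref{Condi-FK}), this yields $c_s\mathbb{E}\|G_n\|^s \leq \kappa(s)^n$. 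The matching one-sided inequality $\kappa(s)^n \leq \mathbb{E}\|G_n\|^s$ is then obtained from Fekete's lemma applied to the (sub- or super-)multiplicative sequence $\mathbb{E}\|G_n\|^s$, whose multiplicativity behaviour is controlled via $\|G_{n+m}\| \leq \|g_{n+m}\cdots g_{n+1}\|\,\|G_n\|$ and the row-column comparability under \ref{Condi-FK}. Taking $n$th roots yields $\kappa(s) = \lim_n (\mathbb{E}\|G_n\|^s)^{1/n}$.
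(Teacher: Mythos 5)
Your overall architecture differs from the paper's, and the difference is where the gaps lie. The paper does not use Ionescu--Tulcea--Marinescu or any Doeblin--Fortet inequality to prove Proposition \ref{change-measure-neg01}: it first produces the eigenmeasure $\nu_s^*$ by the Schauder--Tychonoff fixed point on $\mathcal{P}(\mathcal{S}_\epsilon)$ (as you do), and then \emph{defines} $r_s$ by the duality formula \eqref{Def-rs-neg} and verifies $P_s r_s = k_s r_s$ by a direct computation; positivity and $\bar s$-H\"older continuity are read off the formula, and the identification $k_s=\kappa(s)=\rho(P_s)$ together with \eqref{Inequa-kappa-s} comes from the two-sided comparison of Lemma \ref{lemma kappa 1} applied to $(P_s^*)^n\mathbf 1$ and $P_s^n\mathbf 1$. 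Your route instead extracts $r_s$ from quasi-compactness of $P_s$ on $\scr B_{\bar s}$ and only afterwards matches it with the explicit formula. The two steps on which this hinges are exactly the ones you do not justify: (i) the claim that ``positivity of $P_s$ combined with the minimality of $V(\Gamma_\mu)$ forces the leading eigenvalue to be simple and dominant, producing a strictly positive eigenfunction'' is the Perron--Frobenius core of the matter, not a consequence of quasi-compactness; in the paper the corresponding irreducibility/aperiodicity argument (Lemma \ref{irredu Q u 01}) is carried out for the \emph{normalized} Markov operator $Q_s$, whose very definition requires $r_s$ to exist already, so your ordering hides a genuine chicken-and-egg problem; (ii) the ``uniqueness of the eigenfunction'' you invoke to conclude $\widetilde r_s = r_s$ is likewise unavailable at this stage -- in the paper it is a separate, later result (Lemma \ref{unique r su et nu su}), and Proposition \ref{change-measure-neg01} is deliberately proved without it, precisely by taking \eqref{Def-rs-neg} as the construction of $r_s$ rather than an a posteriori identification. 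If you simply define $r_s$ by \eqref{Def-rs-neg} and run your own duality computation (which is correct and is the paper's \eqref{Scalequa 01}), the whole ITM layer becomes unnecessary for this proposition.

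Two further points. First, the eigenvalue produced by the fixed-point step is a priori some $k_s>0$; you use $\kappa(s)$ for it from the outset, so you must either define $\kappa(s):=\lim_n(\mathbb{E}\|G_n\|^s)^{1/n}$ beforehand (Fekete with the A\kern-0.1mm1-corrected sub/supermultiplicativity, as you sketch) and then prove $k_s=\kappa(s)$, or identify it as the paper does via $k_s^n=\nu_s^*((P_s^*)^n\mathbf 1)\asymp\mathbb{E}\|G_n\|^s$; your final pinching $\kappa(s)^n r_s\asymp P_s^n\mathbf 1$ does accomplish this, but only once $r_s$ exists. Second, your derivation of the right-hand inequality in \eqref{Inequa-kappa-s} from Fekete's lemma works only for $s\geq 0$ (subadditivity gives $\kappa(s)=\inf_n(\mathbb{E}\|G_n\|^s)^{1/n}$); for $s<0$ the plain submultiplicativity of the norm makes $\log\mathbb{E}\|G_n\|^s$ \emph{super}additive, so Fekete yields $\kappa(s)^n\geq\mathbb{E}\|G_n\|^s$, and the A\kern-0.1mm1-based reverse bound $\|G_{n+m}\|\geq c^2\|g_{n+m}\cdots g_{n+1}\|\,\|G_n\|$ only gives the upper comparison up to a multiplicative constant $C_s$; this is also all that the paper's own argument (bounding $\nu_s^*((P_s^*)^n\mathbf 1)$ by $\sup_v(P_s^*)^n\mathbf 1(v)$ via Lemma \ref{lemma kappa 1}) delivers for $s<0$, so you should state and use \eqref{Inequa-kappa-s} in the two-sided form with constants rather than claim the constant-free inequality from Fekete.
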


For $s \in I_{\mu}^+$, Proposition \ref{change-measure-neg01} has been established in \cite{BDGM14}
under weaker assumptions than those of \ref{Condi-FK}; 
for $s \in I_{\mu}^-$, Proposition \ref{change-measure-neg01} is new. 
 It is noteworthy that condition \ref{Condi-FK} enables us to address cases with arbitrary $s<0$.
This is because, under condition \ref{Condi-FK}, both the eigenmeasure $\nu_s$ and $\nu_s^*$
are supported on $\mathcal{S}_\epsilon$ for any $s<0$.
As a result, we can establish the spectral gap properties for the transfer operators 
$P_s$ and $P_s^*$ in this range.
We include the case $s \in I_{\mu}^+$ here,
as it will be utilized in Section \ref{sec:spec gap entries} to develop the spectral gap theory for the new cocycle
$\sigma_f(G_n, v) = \log \frac{ \langle f, G_n v \rangle }{ \langle f, v \rangle }$.

The proof of Proposition \ref{change-measure-neg01}
is preceded by the following technical lemma,
which plays a crucial role in obtaining a lower bound for the spectral radius of $P_s^*$.

\begin{lemma}\label{lemma kappa 1}
Assume \ref{Condi-FK}. 
Then there exists a constant $c>0$ such that for any 
 $n \geq 1$, $\mu^{* n}$-almost every $g = (g^{i,j})_{1 \leq i, j \leq d} \in \bb G$, 
and any $v \in \mathcal{S}$, 
\begin{align*}
c \|g\| \leq  \| gv \| \leq  \|g\|. 
\end{align*}
\end{lemma}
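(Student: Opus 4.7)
The upper bound is immediate from the definitions and needs no structural work. Since $g$ has nonnegative entries and $v\in\mathcal S$ has nonnegative coordinates summing to one, I would write
\[
\|gv\| \;=\; \sum_{i=1}^d \sum_{j=1}^d g^{i,j} v_j \;=\; \sum_{j=1}^d v_j \Bigl(\sum_{i=1}^d g^{i,j}\Bigr) \;\leq\; \sum_{j=1}^d v_j \cdot \|g\| \;=\; \|g\|,
\]
using that each column sum is bounded by the total sum $\|g\| = \sum_{i,j} g^{i,j}$ and that $\sum_j v_j = 1$.

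For the nontrivial inequality $c\|g\|\leq \|gv\|$, my plan is to invoke Lemma \ref{Lem-FK60}, which extends the comparability condition \ref{Condi-FK} from $\supp\mu$ to the full semigroup $\Gamma_\mu$: there exists $c_1>1$ such that for every $g\in\Gamma_\mu$,
\[
\max_{1\leq i,j\leq d} g^{i,j} \;\leq\; c_1 \min_{1\leq i,j\leq d} g^{i,j}.
\]
With this comparability in hand, I would bound $\|g\|$ from above by the trivial estimate $\|g\| \leq d^2 \max_{i,j} g^{i,j}$, and bound $\|gv\|$ from below by pulling out the minimum entry:
\[
\|gv\| \;=\; \sum_{i,j} g^{i,j} v_j \;\geq\; \Bigl(\min_{i,j} g^{i,j}\Bigr) \sum_{i=1}^d \sum_{j=1}^d v_j \;=\; d \min_{i,j} g^{i,j}.
\]
Chaining these three estimates gives
\[
\|gv\| \;\geq\; d \min_{i,j} g^{i,j} \;\geq\; \frac{d}{c_1} \max_{i,j} g^{i,j} \;\geq\; \frac{1}{c_1 d} \|g\|,
\]
which is the desired inequality with $c = 1/(c_1 d)$.

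Since every step is an elementary bookkeeping estimate on nonnegative matrices, there is essentially no analytic obstacle here; the only nontrivial input is Lemma \ref{Lem-FK60}, which propagates the uniform comparability of entries from $\supp\mu$ to all finite products in $\Gamma_\mu$. The closure under the matrix norm causes no issue because both $\|gv\|$ and $\|g\|$ are continuous in $g$, so the inequality extends from finite products to the closure $\Gamma_\mu$.
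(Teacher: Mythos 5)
Your proposal is correct and follows essentially the same route as the paper: the upper bound is the same elementary column-sum (or equivalently $\langle gv,\mathbf 1\rangle\leq\langle g\mathbf 1,\mathbf 1\rangle$) estimate, and the lower bound rests on Lemma \ref{Lem-FK60} exactly as in the paper, the only cosmetic difference being that you chain through $\min_{i,j}g^{i,j}$ and $\max_{i,j}g^{i,j}$ while the paper passes through the diagonal entries $g^{i,i}$, yielding the constant $1/(c_1 d)$ instead of $1/(c^2 d)$. The closing remark about extending by continuity to the closure is redundant, since Lemma \ref{Lem-FK60} is already stated for all $g\in\Gamma_\mu$, but it causes no harm.
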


\begin{proof}
By definition, we have that for any $g\in \bb G$ and $v \in \mathcal{S}$, 
\begin{align*}
\| gv \| = \langle gv, \mathbf 1 \rangle  \leq  \langle g \mathbf 1, \mathbf 1 \rangle = \|g\|. 
\end{align*}
On the other hand, by Lemma \ref{Lem-FK60}, there exists a constant $\varkappa'_0 >1$ 
such that for any 
$n \geq 1$, $\mu^{* n}$-almost every $g = (g^{i,j})_{1 \leq i, j \leq d} \in \bb G$, 
 and any $v = (v_1, \ldots, v_d) \in \mathcal{S}$, 
\begin{align*}
\| gv \| 
= \sum_{i = 1}^d \sum_{j = 1}^d  g^{i,j}  v_j
 \geq \frac{1}{\varkappa'_0}  \sum_{i = 1}^d \sum_{j = 1}^d  g^{i,i} v_j
=  \frac{1}{\varkappa'_0}  \sum_{i = 1}^d  g^{i,i}  
 \geq  \frac{1}{ (\varkappa'_0)^2 d}  \sum_{i = 1}^d \sum_{j = 1}^d  g^{i,j}  = \frac{1}{ (\varkappa'_0)^2 d}  \|g\|. 
\end{align*}
This completes the proof. 
\end{proof}

A matrix is called allowable if every row and every column contains at least one strictly positive element. 
Let $\bb G_{0}$ be the multiplicative semigroup of $d \times d$ allowable matrices.
The following result is proved in \cite[Lemma 3.1]{HH08}. 

\begin{lemma}[\cite{HH08}] \label{Lem-Contin-Cocycle}
Assume \ref{Condi-FK-weak}.
Then, for any $g\in \bb G_0$ and $v_1, v_2 \in \mathcal{S}$ with $\mathbf{d}(v_1, v_2) < 1$, 
\begin{align*}
\left|  \log\|g v_1\| - \log\|g v_2\| \right| 
\leq  2 \log \frac{1}{1 - \mathbf{d}(v_1, v_2)}. 
\end{align*}
In particular, for any $a \in (0,1)$, there exists a constant $c>0$ 
such that for any $g\in \bb G_0$ and $v_1, v_2 \in \mathcal{S}$ with $\mathbf{d}(v_1, v_2) < a$, 
\begin{align*}
\left|  \log\|g v_1\| - \log\|g v_2\| \right| 
\leq  c \mathbf{d}(v_1, v_2). 
\end{align*}
\end{lemma}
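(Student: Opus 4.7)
The plan is to translate the Hilbert cross-ratio metric into a pair of componentwise inequalities, and then to propagate these through the positive matrix $g$ and the monotone $L_1$-norm on $\mathbb{R}_+^d$. Set $\alpha = m(v_1, v_2)$ and $\beta = m(v_2, v_1)$, so that by definition $\alpha v_2 \leq v_1$ and $\beta v_1 \leq v_2$ componentwise, i.e.\ $\alpha v_2 \leq v_1 \leq \beta^{-1} v_2$. Summing these coordinate inequalities against the all-ones vector $\mathbf{1}$ and using $\|v_1\| = \|v_2\| = 1$ immediately yields $\alpha, \beta \in (0, 1]$. A direct algebraic manipulation of the definition \eqref{Def-distance} gives
\[
\alpha\beta = \frac{1 - \mathbf{d}(v_1, v_2)}{1 + \mathbf{d}(v_1, v_2)},
\]
which is strictly positive under the assumption $\mathbf{d}(v_1, v_2) < 1$; in particular $\alpha, \beta > 0$.

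Next, since every $g \in \Gamma_\mu$ has nonnegative entries, it preserves the componentwise partial order on $\mathbb{R}_+^d$, so the chain $\alpha v_2 \leq v_1 \leq \beta^{-1} v_2$ passes to $\alpha \, g v_2 \leq g v_1 \leq \beta^{-1} g v_2$. The $L_1$-norm is linear on $\mathbb{R}_+^d$, hence monotone there, so $\alpha \|g v_2\| \leq \|g v_1\| \leq \beta^{-1} \|g v_2\|$. Taking logarithms, using $\alpha, \beta \leq 1$ to replace $\alpha \wedge \beta$ by the smaller quantity $\alpha\beta$, and substituting the expression for $\alpha\beta$ derived above,
\[
\bigl| \log\|g v_1\| - \log\|g v_2\| \bigr| \;\leq\; -\log(\alpha \wedge \beta) \;\leq\; -\log(\alpha\beta) \;=\; \log\frac{1 + \mathbf{d}(v_1, v_2)}{1 - \mathbf{d}(v_1, v_2)}.
\]
The factor of $2$ in the claimed bound is then produced by the elementary inequality $(1 + \mathbf{d})(1 - \mathbf{d}) = 1 - \mathbf{d}^2 \leq 1$, which rearranges to $(1+\mathbf{d})/(1-\mathbf{d}) \leq (1-\mathbf{d})^{-2}$ and therefore to $\log((1+\mathbf{d})/(1-\mathbf{d})) \leq 2\log(1/(1-\mathbf{d}))$.

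For the Lipschitz refinement, I restrict $\mathbf{d}(v_1, v_2) \in [0, a)$ with $a \in (0,1)$ fixed, and invoke that the map $x \mapsto -2\log(1-x)$ has derivative $2/(1-x) \leq 2/(1-a)$ on this interval; combined with the first inequality this delivers the linear bound with $c = 2/(1-a)$. There is no genuine obstacle in this lemma: the one conceptual step is the passage from the Hilbert metric to componentwise domination and back, after which the argument is purely algebraic. I also note that the full strength of condition \ref{Condi-FK} plays no role in the argument; what is really used is only that $g$ has nonnegative entries, which already follows from $g \in \Gamma_\mu$.
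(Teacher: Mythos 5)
Your argument is correct. The paper itself gives no proof of this lemma — it is quoted from Hennion and Herv\'e \cite{HH08} — and your componentwise-domination argument is precisely the standard one underlying that citation: from $\alpha = m(v_1,v_2)$, $\beta = m(v_2,v_1)$ you get $\alpha v_2 \leq v_1 \leq \beta^{-1} v_2$, nonnegativity of $g$ and additivity of the $L_1$-norm on $\bb R_+^d$ give $\alpha \|g v_2\| \leq \|g v_1\| \leq \beta^{-1}\|g v_2\|$, and the normalisation $\|v_1\|=\|v_2\|=1$ forces $\alpha,\beta \leq 1$, so that $-\log(\alpha\wedge\beta) \leq -\log(\alpha\beta) = \log\frac{1+\mathbf{d}(v_1,v_2)}{1-\mathbf{d}(v_1,v_2)} \leq 2\log\frac{1}{1-\mathbf{d}(v_1,v_2)}$; the Lipschitz refinement on $\{\mathbf{d}<a\}$ with $c = 2/(1-a)$ is likewise fine. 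One small caveat on your closing remark: nonnegativity of $g$ alone does not guarantee that $\log\|g v_1\|$ and $\log\|g v_2\|$ are defined, since $gv$ may vanish for a nonnegative matrix; some form of positivity (allowability in \cite{HH08}, or \ref{Condi-FK} here, via Lemma \ref{Lem-FK60}) is what ensures $\|gv\|>0$ for all $v \in \mathcal{S}$. Granting that the norms are nonzero (and note that under $\mathbf{d}(v_1,v_2)<1$ one of them is nonzero iff the other is), your proof indeed uses nothing more than nonnegativity, which is consistent with the paper's remark that \cite{HH08} works under allowability and positivity rather than \ref{Condi-FK}.
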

By Lemma \ref{Lem-equiv-weak-FK}, condition \ref{Condi-FK-weak} implies the allowability and  positivity conditions,
so that Lemma \ref{Lem-Contin-Cocycle} holds true, according to \cite[Lemma 3.1]{HH08}.

\begin{proof} [Proof of Proposition \ref{change-measure-neg01}] 
We provide a proof of Proposition \ref{change-measure-neg01} only for $s \in I_{\mu}^-$,
as the corresponding result for $s \in I_{\mu}^+$ has been established in \cite{BDGM14}.
The case of negative $s \in I_{\mu}^-$ encounters additional difficulties compared with the case of positive $s \in I_{\mu}^+$.

\textit{Step 1.} We prove the existence of the eigenmeasures $\nu_{s}, \nu_{s}^*$ 
and of the eigenfunctions $r_{s}, r_{s}^*$. 
For any $s\in I_{\mu}^-$ and $\eta \in \mathcal{P(S)}$, 
define a non-linear operator $\hat{P}_s^*$ on $\mathcal{P(S)}$ 
by $\hat{P}_s^* \eta = \frac{P_s^* \eta}{(P_s^* \eta)(1)}$, 
where 
for any $\varphi \in \mathcal{C(S)}$, 
\begin{align*}
( P_s^* \eta)\varphi = \eta(P_s^* \varphi)
= \int_{\mathcal{S}}  \int_{\bb G}
\| g^{\mathrm T} v \|^s  \varphi(g^{\mathrm T}\!\cdot\! v)\mu(dg)\eta(dv). 
\end{align*}
By \eqref{equicon A4-bb} of Lemma \ref{lem equiv Kesten},  
 under condition \ref{Condi-FK}, 
 there exists a constant $\epsilon >0$ such that
  $g^{\mathrm T} \cdot  v \in \mathcal{S}_{\epsilon}$ for any $v \in \mathcal{S}$
  and  $\mu$-almost every $g \in \bb G$, 
which implies that $\hat{P}_s^* \eta$ is a probability measure on $\mathcal{S}_\epsilon$ for any $\eta \in \mathcal{P(S_\epsilon)}$. 
Since the operator $\hat{P}_s^*$ is weakly continuous on $\mathcal{P(S_\epsilon)}$,
and the space $\mathcal{P(S_\epsilon)}$ is compact and convex, 
by the Schauder-Tychonoff theorem, there exists
$\nu_{s}^*\in \mathcal{P(S_\epsilon)}$ such that $\hat{P}_s^* \nu_{s}^* = \nu_{s}^*$, 
so that 
$P_s^* \nu_{s}^* = k_{s} \nu_{s}^*$
with $k_{s} = (P_s^* \nu_{s}^*)(1) > 0$. 
Now we check that 
$r_s$ defined by \eqref{Def-rs-neg} satisfies $P_s r_s = k_s r_s$.   
By the definition of $P_s$ and $r_s$, we have
\begin{align} \label{Scalequa 01}
P_s r_{s}(v) 
& =  \int_{\bb G} \frac{1}{\|gv\|^{-s}}
\left(  \int_{\mathcal{S}}  \frac{1}{ \langle g \!\cdot\! v, u \rangle^{-s} } \nu^*_{s}(du) \right)\mu(dg)   \notag\\
& =  \int_{\bb G} 
\left(  \int_{\mathcal{S}}  \frac{1}{ \langle  v, g^{\mathrm{T} } u \rangle^{-s} } \nu^*_{s}(du) \right)\mu(dg) \notag\\
& =  \int_{\mathcal{S}}  \int_{\bb G}
  \frac{1}{\|g^{\mathrm{T} } u\|^{-s}}   \frac{1}{ \langle v,  g^{\mathrm{T} } \!\cdot\! u \rangle^{-s} }
\mu(dg)\nu_{s}^*(du). 
\end{align}
Letting $\varphi_{v}(u) = \frac{1}{\langle v, u\rangle^{-s} }$ 
and using $P_s^* \nu_{s}^* = k_{s} \nu_{s}^*$, 
from \eqref{Scalequa 01} 
it follows that
\begin{align*}
P_s r_{s}(v) 
= (\nu_s^* P_s^*) \varphi_{v} 
= k_{s} \int_{\mathcal{S}}  \frac{1}{\langle v, u\rangle^{-s} }  \nu_{s}^*(du)
= k_{s} r_{s}(v). 
\end{align*}
Similarly one can verify that there exists $\nu_{s} \in \mathcal{P(S_\epsilon)}$ 
satisfying $P_s \nu_{s} = k_{s}^* \nu_{s}$ 
with $k_{s}^* = (P_s \nu_{s})(1) > 0$
and that  $r_s^*$ defined by \eqref{Def-rs-neg}
satisfies $P_s^* r_{s}^* = k_{s}^*  r_{s}^*$.

\textit{Step 2.}
We prove $V(\Gamma_{\mu})\subseteq \supp \nu_{s}$, 
the positivity and the H\"{o}lder continuity of $r_{s}$.
From $P_s \nu_{s} = k_{s}^* \nu_{s}$, 
we have that for any $\varphi \in \mathcal{C(S)}$,
\begin{align*} 
\int_{\mathcal{S}}
\int_{\bb G}  \frac{1}{\|gv\|^{-s}}  \varphi(g\!\cdot\! v) \mu(dg) \nu_{s}(dv)
= k_{s}^* \int_{\mathcal{S}} \varphi(x) \nu_{s}(dx),
\end{align*}
which implies that $\Gamma_{\mu} (\supp \nu_{s})\subseteq \supp \nu_{s}$. 
Since $V(\Gamma_{\mu})$ is the unique minimal closed $\Gamma_{\mu}$-invariant subset of $\mathcal{S}$,
we obtain that $V(\Gamma_{\mu})\subseteq \supp \nu_{s}$. 

Since $\supp \nu_{s}^* \subseteq \mathcal{S}_{\epsilon}$, 
it holds that $\inf_{v \in \mathcal{S}} r_{s}(v) > 0$. 
Now we show that $r_{s}$ is $\bar{s}$-H\"{o}lder continuous with respect to $\mathbf{d}$. 
By the definition of $r_{s}$ and the fact that $\langle v, u\rangle\geq \varepsilon$, 
for any $v \in \mathcal{S}$ and $u \in \supp \nu_{s}^*$, we have that for any $v_1, v_2 \in \mathcal{S}$,
\begin{align}
 |r_{s}(v_1) - r_{s}(v_2)| 
& \leq  
\int_{\mathcal{S}}  \left|\frac{1}{\langle  v_1, u \rangle^{-s} }-\frac{1}{\langle v_2, u \rangle^{-s} } \right| \nu_{s}^*(du)
\notag\\
& \leq c_s  \int_{\mathcal{S}}  | \langle v_1, u\rangle^{-s} - \langle v_2, u\rangle^{-s} |  \nu_{s}^*(du). 
\end{align}
For $s \in [-1, 0)$, it holds $| \langle v_1, u\rangle^{-s} - \langle v_2, u\rangle^{-s} |  \leq \|v_1 - v_2\|^{-s}$. 
For $s < -1$, it holds
$| \langle v_1, u\rangle^{-s} - \langle v_2, u\rangle^{-s} |  \leq c_s \| v_1 - v_2\|^{\bar{s}}$. 
Therefore, using $\|v_1 - v_2\| \leq c \mathbf d(v_1, v_2)$, we get
\begin{align*}
|r_{s}(v_1) - r_{s}(v_2)| \leq c'_s \| v_1 - v_2\|^{\bar{s}} \leq  c''_s  \mathbf d(v_1, v_2)^{\bar{s}}.
\end{align*}
Similarly, one can show that $\supp \nu_{s} \subseteq \mathcal{S}_{\epsilon}$
and that $r_{s}^*$ is strictly positive and $\bar{s}$-H\"{o}lder continuous with respect to $\mathbf{d}$.

\textit{Step 3.}
We prove $k_{s} = \kappa(s) = \rho(P_s)$. 
Using Lemma \ref{lemma kappa 1}, we get
\begin{align*}
\sup_{v \in \mathcal{S} } | P_s^n \varphi(v)|
& =  \sup_{v \in \mathcal{S} }  \left| \int \frac{1}{ \| g_n\ldots g_1 v \|^{-s} }
       \varphi[(g_n\ldots g_1)\!\cdot\! v] \mu(dg_1)\ldots \mu(dg_n)  \right|    \notag\\
& \leq  c_s \|\varphi\|_\infty \mathbb{E}\left(\|g_n \ldots g_1\|^s\right).
\end{align*}
This implies $k_{s} \leq \rho(P_s) \leq  \lim_{n\to\infty}\left(\mathbb{E}\| G_n \|^{s}\right)^{\frac{1}{n}}.$
On the other hand, 
using $P_s^* \nu_{s}^* = k_{s} \nu_{s}^*$ and Lemma \ref{lemma kappa 1}, 
we obtain
\begin{align} 
k_{s}^n = \nu_{s}^*( (P_s^*)^n 1)
 =  \mathbb{E} \int_{\mathcal{S}}
\frac{1}{\| g_n^{\mathrm T}\ldots g_1^{\mathrm T} v \|^{-s} }  \nu_{s}^*(dv)  
 \geq  c_s \mathbb{E}\|g_1 \ldots g_n\|^s
= c_s \mathbb{E}\|g_n \ldots g_1\|^s,  \nonumber
\end{align}
which implies $k_{s} \geq \lim_{n\to\infty}\left(\mathbb{E} \| G_n \|^{s}\right)^{\frac{1}{n}}.$ 
Therefore, 
$$
k_{s}=\kappa(s) = \rho(P_s) = \lim_{n\to\infty}\left(\mathbb{E} \| G_n \|^{s}\right)^{\frac{1}{n}}.
$$
Moreover,  
\begin{align*}
\rho(P_s^*) = \lim_{n\to\infty}\left(\mathbb{E}\| g_n^{\mathrm T}\ldots g_1^{\mathrm T}  \|^{s}\right)^{\frac{1}{n}} = 
\lim_{n\to\infty}\left(\mathbb{E}\| g_1\ldots g_n  \|^{s}\right)^{\frac{1}{n}}  =\kappa(s). 
\end{align*}
This concludes the proof of Proposition \ref{change-measure-neg01}. 
\end{proof}

 For any $s \in I_{\mu}$, 
define the operator $Q_s$ as follows: for any $\varphi\in \mathcal{C(S)}$,
\begin{align}\label{defi Q_s}  
Q_s \varphi(v): =  \frac{1}{\kappa(s)r_{s}(v)} P_s(\varphi r_{s})(v), \quad v \in \mathcal{S},
\end{align}
which is a Markov operator satisfying $Q_s 1 = 1$ and $Q_s \varphi \geq 0$ for $\varphi \geq 0$. 
By iteration, we get for any $v \in \mathcal{S}$ and $n \geq 1$, 
\begin{align*}
Q_s^n\varphi(v)
= \frac{1}{\kappa(s)^n r_{s}(v)} P_s^n(\varphi r_{s})(v)
= \int \varphi(G_n \!\cdot\! v) q_{n}^{s}(G_n, v)  \mu(dg_1)\ldots \mu(dg_n),
\end{align*}
where
\begin{align}\label{def-qnsGn}
q_{n}^{s}(G_n, v)
=\frac{1}{\kappa(s)^{n} \|G_n v \|^{-s} }
\frac{r_{s} (G_n \!\cdot\! v )}{r_{s}(v)}.
\end{align}
It holds that for any $n,m\geq 1$, 
\begin{align} \label{cocycle02}
q_{n}^{s}(G_n, v)
q_{m}^{s} ( g_{n+m} \ldots g_{n+1}, G_n \!\cdot\! v )
=q_{n+m}^{s}(G_{n+m}, v).
\end{align}
For any $v \in\mathcal{S}$, 
the sequence of probability measures
\begin{align*}
\bb Q_{s,n}^v(dg_1, \dots, dg_n): = q_{n}^{s}(G_n, v)\mu(dg_1) \dots \mu(dg_n)
\end{align*}
forms a projective system on $M(d,\mathbb{R})^{\mathbb{N}}$. 
By the Kolmogorov extension theorem,  
there exists a unique probability
measure $\mathbb{Q}_{s}^{v}$ on $M(d,\mathbb{R})^{\mathbb{N}}$ with marginals $\bb Q_{s,n}^v$. 
The corresponding expectation is denoted by $\mathbb{E}_{\mathbb{Q}_{s}^{v}}$. 
The following change of measure formula holds: 
for any bounded measurable function $h$ on $(\mathcal{S} \times \mathbb R)^n$ and $n\geq 1$, 
\begin{align} \label{change measure equ coeff}
&\frac{1}{\kappa(s)^{n} r_{s}(v)}
\mathbb{E} \Big[  r_{s}(G_n \!\cdot\! v) \| G_n v\|^s   h\big( G_1 \!\cdot\! v,  \log \|G_1 v\|, \ldots,  G_n \!\cdot\! v,  \log \|G_n v\|  \big)  \Big]  \nonumber \\
&\qquad\qquad\quad
=\mathbb{E}_{\mathbb{Q}_{s}^{v}} \Big[ h\big( G_1 \!\cdot\! v,  \log \|G_1 v\|, \ldots,  G_n \!\cdot\! v,  \log \|G_n v\|  \big) \Big].
\end{align}
We next show that the kernel $q_{n}^{s}(G_n, \cdot)$ is $\bar{s}$-H\"{o}lder continuous 
with respect to $\mathbf{d}$.

\begin{lemma}\label{kernel inequa coeff 01}
Assume either \ref{Condi-FK} when $s \in I_{\mu}^-$, or \ref{Condi-FK-weak}  when $s \in I_{\mu}^+$.
Then, for any $s \in I_{\mu}$,  there exists a constant $c_s>0$ such that for any $n\geq 1$
and $v_1, v_2\in \mathcal{S}$, 
\begin{align*}
|q_{n}^{s}(G_n, v_1 ) - q_{n}^{s}(G_n, v_2 )|
\leq  \frac{ c_{s} }{\kappa(s)^n  \| G_n \|^{-s} } \mathbf{d}(v_1, v_2)^{\bar{s}}. 
\end{align*}
\end{lemma}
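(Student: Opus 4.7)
The plan is to write $q_{n}^{s}(v,G_n) = \frac{1}{\kappa(s)^{n}} \cdot \frac{\|G_n v\|^{s}\, r_{s}(G_n \!\cdot\! v)}{r_{s}(v)}$ and then telescope the difference $q_{n}^{s}(v_1,G_n) - q_{n}^{s}(v_2,G_n)$ into three pieces by freezing two of the three factors at a time: (i) the variation of $\|G_n v\|^{s}$, (ii) the variation of $r_{s}(G_n \!\cdot\! v)$, and (iii) the variation of $1/r_{s}(v)$. The goal is to bound each piece by $\frac{c_{s}}{\kappa(s)^{n}\|G_n\|^{-s}}\mathbf{d}(v_1,v_2)^{\bar{s}}$ and then sum.

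For pieces (ii) and (iii), I would invoke directly the $\bar{s}$-H\"older continuity of $r_{s}$ (Proposition \ref{change-measure-neg01}) together with the uniform positivity $\inf_{v \in \mathcal{S}} r_{s}(v) > 0$. To handle (ii), I also use the contraction of the Hilbert metric under the projective action, $\mathbf{d}(G_n \!\cdot\! v_1, G_n \!\cdot\! v_2) \leq a^{n}\mathbf{d}(v_1,v_2)$ from \eqref{Contracting inequa}, which gives $|r_{s}(G_n \!\cdot\! v_1) - r_{s}(G_n \!\cdot\! v_2)| \leq c_{s}\mathbf{d}(v_1,v_2)^{\bar{s}}$. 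The $\|G_n v_2\|^{s}$ factor in front is controlled via Lemma \ref{lemma kappa 1}, which yields $\|G_n v\|^{s} \leq c_{s}\|G_n\|^{s}$ uniformly in $v \in \mathcal{S}$ (for $s \geq 0$ this is the trivial half, and for $s < 0$ it comes from the lower bound $\|G_n v\| \geq c\|G_n\|$ provided by condition \ref{Condi-FK} through Lemma \ref{Lem-FK60}).

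For piece (i), I would split according to the size of $\mathbf{d}(v_1,v_2)$. Fix the constant $a \in (0,1)$ from Lemma \ref{Lem-Contin-Cocycle}. On the regime $\mathbf{d}(v_1,v_2) < a$, that lemma gives $|\log\|G_n v_1\| - \log\|G_n v_2\|| \leq c\mathbf{d}(v_1,v_2)$, so writing
\begin{equation*}
\|G_n v_1\|^{s} - \|G_n v_2\|^{s} = \|G_n v_2\|^{s}\bigl(e^{s(\log\|G_n v_1\| - \log\|G_n v_2\|)} - 1\bigr)
\end{equation*}
and using $|e^{x}-1| \leq |x|e^{|x|}$ delivers the Lipschitz bound $c_{s}\|G_n v_2\|^{s}\mathbf{d}(v_1,v_2)$. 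On the complementary regime $\mathbf{d}(v_1,v_2) \geq a$, the bound is trivial since $|\|G_n v_1\|^{s} - \|G_n v_2\|^{s}| \leq 2\sup_{v}\|G_n v\|^{s}$ and $\mathbf{d}(v_1,v_2)^{\bar{s}} \geq a^{\bar{s}}$. Since $\mathbf{d}(\cdot,\cdot) \leq 1$, the Lipschitz estimate upgrades automatically to a $\bar{s}$-H\"older estimate, and invoking Lemma \ref{lemma kappa 1} once more turns $\|G_n v_2\|^{s}$ into $c_{s}\|G_n\|^{s} = c_{s}/\|G_n\|^{-s}$.

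The main subtlety is that the sign of $s$ matters in piece (i): it is precisely Lemma \ref{lemma kappa 1} (whose proof uses the full strength of \ref{Condi-FK} via Lemma \ref{Lem-FK60}) that allows $\|G_n v\|^{s}$ to be compared to $\|G_n\|^{s}$ uniformly in $v$ for negative $s$, thereby unifying the positive and negative exponent cases into the single statement of the lemma. Once the three pieces are summed, this produces the claimed bound.
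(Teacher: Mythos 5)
Your proof is correct and follows essentially the same route as the paper's: the same telescoping of $q_n^s$ into the variations of $\|G_n v\|^{s}$, $r_s(G_n\!\cdot\! v)$ and $1/r_s(v)$, controlled by the $\bar s$-H\"older continuity and positivity of $r_s$ from Proposition \ref{change-measure-neg01}, the contraction \eqref{Contracting inequa}, and Lemma \ref{lemma kappa 1} to compare $\|G_n v\|^{s}$ with $\|G_n\|^{s}$. The only (harmless) difference is your treatment of the $\|G_n v\|^{s}$ factor via Lemma \ref{Lem-Contin-Cocycle} and $|e^{x}-1|\leq |x|e^{|x|}$ with a split on the size of $\mathbf{d}(v_1,v_2)$, whereas the paper uses the elementary power-difference inequalities together with \eqref{DistanPosi}.
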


\begin{proof}
We only give a proof of Lemma \ref{kernel inequa coeff 01} for $s \in I_{\mu}^-$,
since the corresponding result for $s \in I_{\mu}^+$ has been established in \cite{BDGM14}.

Since, by Proposition \ref{change of measure coeffi 01}, 
$r_{s}$ is $\bar{s}$-H\"{o}lder continuous 
with respect to the distance $\mathbf{d}$, 
from \eqref{Contracting inequa} it follows that 
\begin{align*} 
|r_{s}(G_n \!\cdot\! v_1) - r_{s}(G_n \!\cdot\! v_2)|
\leq c_s \mathbf{d}(G_n \!\cdot\! v_1, G_n \!\cdot\! v_2)^{\bar{s}}
\leq c_s  a^{\bar s n} \mathbf{d}(v_1, v_2)^{\bar{s}}.
\end{align*}
Using again the $\bar{s}$-H\"{o}lder continuity and the positivity of $r_{s}$ 
in Proposition \ref{change of measure coeffi 01},
we get 
\begin{align} \label{r su holder 1}
 \left| \frac{r_{s}(G_n \!\cdot\! v_1)}{r_{s}(v_1)}- \frac{r_{s}(G_n \!\cdot\! v_2)}{r_{s}(v_2)} \right| 
& \leq  
r_{s}(G_n \!\cdot\! v_1)\left| \frac{1}{r_{s}(v_1)}-\frac{1}{r_{s}(v_2)} \right|  
+
 \frac{1}{r_{s}(v_2)}| r_{s}(G_n \!\cdot\! v_1) - r_{s}(G_n \!\cdot\! v_2) | 
\notag\\
& \leq  c_s \mathbf{d}(v_1, v_2)^{\bar{s}} +  
c'_s  a^{\bar s n} \mathbf{d}(v_1, v_2)^{\bar{s}}
\leq  c''_s  \mathbf{d}(v_1, v_2)^{\bar{s}}. 
\end{align}
If $0 \leq -s \leq 1$, using $|x^{-s} - y^{-s}| \leq |x- y|^{-s}$ for $x, y \geq 0$, 
and \eqref{DistanPosi}, we have 
\begin{align*}
\left|  \|G_n v_1\|^{-s} - \|G_n v_2\|^{-s}  \right|  
& \leq   \left|  \|G_n v_1\| - \|G_n v_2\| \right|^{-s}  
\leq  \|G_n\|^{-s}  \|v_1 - v_2\|^{-s}    \notag\\
& \leq  c_s  \|G_n\|^{-s} \mathbf{d}(v_1, v_2)^{-s}. 
\end{align*}
If $-s >1$, using $|x^{-s} - y^{-s}| \leq (-s) \max \{ x, y\}^{-s-1} |x- y|$ for $x, y \geq 0$, 
Lemma \ref{lemma kappa 1} and again \eqref{DistanPosi}, we have 
\begin{align*}
\left|  \|G_n v_1\|^{-s} - \|G_n v_2\|^{-s}  \right|  
& \leq  c_s \|G_n\|^{-s-1}  \left|  \|G_n v_1\| - \|G_n v_2\| \right|    \notag\\ 
&  \leq  c'_s \|G_n\|^{-s}  \|v_1 - v_2\|   
 \leq  c''_s  \|G_n\|^{-s} \mathbf{d}(v_1, v_2). 
\end{align*}
Hence, for any $s<0$, we get that with $\bar{s}=\min\{1, -s\}$, 
\begin{align*}
\left|  \|G_n v_1\|^{-s} - \|G_n v_2\|^{-s}  \right|  
\leq  c_s \|G_n\|^{-s} \mathbf{d}(v_1, v_2)^{\bar{s}}, 
\end{align*}
Using this and Lemma \ref{lemma kappa 1}, we obtain
\begin{align*} 
 \left| \frac{1}{\|G_n v_1\|^{-s}} - \frac{1}{\|G_n v_2\|^{-s}} \right|  
 =  \frac{ \left|  \|G_n v_1\|^{-s} - \|G_n v_2\|^{-s}  \right|  }{ \|G_n v_1\|^{-s}  \|G_n v_2\|^{-s} }
\leq \frac{c_s}{ \|G_n\|^{-s} }  \mathbf{d}(v_1, v_2)^{\bar{s}}. 
\end{align*}
This, together with 
\eqref{r su holder 1} and Lemma \ref{lemma kappa 1},
concludes the proof of the lemma. 
\end{proof}

The following result is the Doeblin-Fortet inequality for the Markov operator $Q_s$.

\begin{lemma} \label{equiconti Q_s}
Assume either \ref{Condi-FK} when $s \in I_{\mu}^-$, or \ref{Condi-FK-weak}  when $s \in I_{\mu}^+$.
Then, for any $s \in I_{\mu}$, 
there exist constants $c_{s}>0$ and $0<a<1$ such that 
for any $\bar{s}$-H\"older continuous function $\varphi$ on $\mathcal{S}$ and $n\geq1$, 
\begin{align}\label{Equi-contin-Inequ}
[Q_s^n \varphi]_{\bar{s}}
\leq  c_{s} \|\varphi\|_\infty + a^{\bar s n} [\varphi]_{\bar{s}}.
\end{align} 
In particular, for any $\varphi \in \mathcal{C(S)}$, the sequence $\{ Q_s^n \varphi \}_{n \geq 1}$
is equicontinuous on $\mathcal{S}$. 
\end{lemma}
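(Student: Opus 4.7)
The plan is a standard two-term decomposition of the Markov operator $Q_s^n$, exploiting the fact that it already satisfies $Q_s 1 = 1$, combined with the contraction property \eqref{Contracting inequa} and the kernel regularity from Lemma \ref{kernel inequa coeff 01}.

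First, I would write, for any $v_1, v_2 \in \mathcal{S}$,
\begin{align*}
Q_s^n \varphi(v_1) - Q_s^n \varphi(v_2)
& = \int \bigl[ \varphi(G_n \!\cdot\! v_1) - \varphi(G_n \!\cdot\! v_2) \bigr] q_n^s(v_1, G_n) \mu^{\otimes n}(\mathrm{d}g) \\
& \quad + \int \varphi(G_n \!\cdot\! v_2) \bigl[ q_n^s(v_1, G_n) - q_n^s(v_2, G_n) \bigr] \mu^{\otimes n}(\mathrm{d}g)
=: I_1 + I_2.
\end{align*}
To control $I_1$, I would apply the $\bar{s}$-Hölder bound for $\varphi$ combined with the contraction \eqref{Contracting inequa}, which gives $\mathbf{d}(G_n \!\cdot\! v_1, G_n \!\cdot\! v_2) \leq a^n \mathbf{d}(v_1, v_2)$, and use $\int q_n^s(v_1, G_n) \mu^{\otimes n}(\mathrm{d}g) = Q_s^n 1(v_1) = 1$; this yields $|I_1| \leq [\varphi]_{\bar{s}} \, a^{n\bar{s}} \, \mathbf{d}(v_1, v_2)^{\bar{s}}$. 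After replacing $a$ by $a^{\bar{s}} < 1$ in the statement, this produces the geometric term.

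For $I_2$, I would invoke Lemma \ref{kernel inequa coeff 01} to obtain
\begin{align*}
|I_2| \leq \|\varphi\|_\infty \, \frac{c_s \, \mathbf{d}(v_1, v_2)^{\bar{s}}}{\kappa(s)^n} \, \mathbb{E}(\|G_n\|^s),
\end{align*}
and then use the upper bound $\mathbb{E}(\|G_n\|^s) \leq \kappa(s)^n / c_s$ from \eqref{Inequa-kappa-s} to conclude $|I_2| \leq c_s' \|\varphi\|_\infty \, \mathbf{d}(v_1, v_2)^{\bar{s}}$. Dividing by $\mathbf{d}(v_1, v_2)^{\bar{s}}$ and taking the supremum over $v_1 \neq v_2$ yields \eqref{Equi-contin-Inequ}. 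The principal point to check is that Lemma \ref{kernel inequa coeff 01} and \eqref{Inequa-kappa-s} are uniform enough to absorb the sign of $s$ and the behaviour of $\bar{s} = \min\{1, |s|\}$; this is where some care is required, but both results are already stated for $s \in I_\mu^+ \cup I_\mu^-$.

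Finally, for the equicontinuity of $\{Q_s^n \varphi\}_{n \geq 1}$ when $\varphi \in \mathcal{C}(\mathcal{S})$, I would use a density argument: since $Q_s$ is Markov with $Q_s 1 = 1$, it contracts the sup norm, so $\|Q_s^n \varphi - Q_s^n \psi\|_\infty \leq \|\varphi - \psi\|_\infty$. Approximate $\varphi$ uniformly by $\bar{s}$-Hölder functions $\psi_k$; apply \eqref{Equi-contin-Inequ} to each $Q_s^n \psi_k$ to get equi-Hölder, hence equicontinuous, families, and transfer equicontinuity back to $Q_s^n \varphi$ via an $\epsilon/3$ argument. The expected main obstacle is the uniformity of the constants in $s$ through the kernel estimate and the normalization \eqref{Inequa-kappa-s}, but these are already packaged in the lemmas available at this stage.
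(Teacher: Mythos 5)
Your proposal is correct and follows essentially the same route as the paper: the same two-term decomposition of $Q_s^n\varphi(v_1)-Q_s^n\varphi(v_2)$, with the $\varphi$-difference term controlled by the contraction \eqref{Contracting inequa} together with $Q_s^n 1=1$, and the kernel-difference term controlled by Lemma \ref{kernel inequa coeff 01} combined with \eqref{Inequa-kappa-s}, followed by the same density argument for equicontinuity. Your remark that the geometric factor is really $a^{n\bar{s}}$, so one should rename the constant $a^{\bar{s}}\in(0,1)$, is a harmless refinement of what the paper states.
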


\begin{proof}
By the definition of $Q_s$, we write
\begin{align} \label{equiconti Q su I}
  |Q_s^n \varphi(v_1) - Q_s^n \varphi(v_2)|  
& \leq   \left|  \int \varphi( G_n \!\cdot\! v_1) 
\left[ q_{n}^{s}(G_n, v_1)- q_{n}^{s}(G_n, v_2)   \right] \mu(dg_1) \ldots \mu(dg_n)
\right|  \notag\\
& \quad  +  
\left| \int \left[ \varphi( G_n\! \cdot\! v_1)- \varphi( G_n\! \cdot\! v_2)  \right] 
q_{n}^{s}(G_n, v_2) \mu(dg_1) \ldots \mu(dg_n)
 \right|  \notag\\
& =: I_1+I_2. 
\end{align}
For $I_1$, using Lemma \ref{kernel inequa coeff 01} and \eqref{Inequa-kappa-s} gives
\begin{align}  \label{equiconti Q su I 1}
I_1  & \leq 
\|\varphi\|_{\infty} 
\mathbb{E} \left|q_{n}^{s}(G_n, v_1) - q_{n}^{s}(G_n, v_2) \right| \notag\\
&  \leq  
c_{s}\|\varphi\|_{\infty}  \frac{ \mathbb{E} (\|G_n\|^{s}) }{\kappa(s)^n} \mathbf{d}(v_1,v_2)^{\bar{s}} 
\leq  c'_{s}\|\varphi\|_{\infty} \mathbf{d}(v_1,v_2)^{\bar{s}}.
\end{align}
For $I_2$, 
from \eqref{Contracting inequa}, we have 
$\mathbf{d}(G_n \!\cdot\! v_1, G_n \!\cdot\! v_2)\leq a^n \mathbf{d}(v_1, v_2)$ for some constant $0 < a <1$, 
and hence
\begin{align} \label{equiconti Q su I 2 posi}
I_2
\leq [\varphi]_{\bar{s}} a^{\bar s n }  \mathbf{d}(v_1, v_2)^{\bar{s}}.
\end{align}
Combining \eqref{equiconti Q su I}, \eqref{equiconti Q su I 1} and \eqref{equiconti Q su I 2 posi}
concludes the proof of \eqref{Equi-contin-Inequ}.
Since each $\varphi\in \mathcal{C(S)}$ 
can be approximated by a sequence of $\bar{s}$-H\"{o}lder continuous functions,
the sequence $\{ Q_s^n \varphi \}_{n \geq 1}$
is equicontinuous on $\mathcal{S}$. 
\end{proof}

The following result shows that 
the Markov operator $Q_s$ is irreducible and aperiodic.

\begin{lemma} \label{irredu Q u 01}
Assume either \ref{Condi-FK} when $s \in I_{\mu}^-$, or \ref{Condi-FK-weak}  when $s \in I_{\mu}^+$.
Let $s \in I_{\mu}$.  
If there exist 
$\varphi\in \mathcal{C(S)}$ and $\theta\in \mathbb{R}$ satisfying 
$Q_s \varphi = e^{i\theta} \varphi$,
then necessarily  $e^{i\theta}=1$ and  $\varphi$ is a constant on $\mathcal{S}$.
\end{lemma}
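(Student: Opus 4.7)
The plan is to run a standard peripheral spectrum / maximum-principle argument, using the strict positivity of the kernel $q_1^s(v,\cdot)$ together with the contraction property \eqref{Contracting inequa} and the minimality of $V(\Gamma_\mu)$. Iterating the hypothesis gives $Q_s^n \varphi = e^{in\theta}\varphi$ for all $n\geq 1$. Set $M=\|\varphi\|_\infty$ and pick $v_0\in \mathcal S$ with $|\varphi(v_0)|=M$.

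First I would show that $|\varphi|\equiv M$ on $V(\Gamma_\mu)$. Since $Q_s$ is a Markov operator with $Q_s1=1$, the chain of inequalities
\[
M=|\varphi(v_0)|=|Q_s\varphi(v_0)|\leq Q_s|\varphi|(v_0)\leq M
\]
collapses to equalities. Equality in the triangle inequality combined with $\int q_1^s(v_0,g)\,\mu(dg)=1$ forces $|\varphi(g\!\cdot\! v_0)|=M$ for $\mu$-a.s.~$g$, and hence for every $g\in\supp\mu$ by continuity of $\varphi$. Iterating this yields $|\varphi|\equiv M$ on the orbit $\Gamma_\mu\cdot v_0$ and, by continuity, on its closure. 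Since $V(\Gamma_\mu)$ is the unique minimal closed $\Gamma_\mu$-invariant subset of $\mathcal S$, it is contained in $\overline{\Gamma_\mu\cdot v_0}$, so $|\varphi|\equiv M$ on $V(\Gamma_\mu)$.

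Next I would show that $\varphi$ is constant on $V(\Gamma_\mu)$ and that $e^{i\theta}=1$. Write $\varphi=Mh$ on $V(\Gamma_\mu)$ with $|h|=1$. Applying the triangle-inequality saturation argument again to $Q_s\varphi(v)=e^{i\theta}\varphi(v)$ for $v\in V(\Gamma_\mu)$ gives the cocycle identity $h(g\!\cdot\! v)=e^{i\theta}h(v)$ for all $g\in\supp\mu$; iterating, $h(G_n\!\cdot\! v)=e^{in\theta}h(v)$ for all $(g_1,\dots,g_n)\in(\supp\mu)^n$ and every $v\in V(\Gamma_\mu)$. Now for any $v_1,v_2\in V(\Gamma_\mu)$, the contraction \eqref{Contracting inequa} yields $\mathbf{d}(G_n\!\cdot\! v_1,G_n\!\cdot\! v_2)\leq a^n\mathbf{d}(v_1,v_2)\to 0$, so by uniform continuity of $h$ on the compact set $V(\Gamma_\mu)$,
\[
|h(v_1)-h(v_2)|=|h(G_n\!\cdot\! v_1)-h(G_n\!\cdot\! v_2)|\longrightarrow 0,
\]
whence $h\equiv c$ is constant on $V(\Gamma_\mu)$. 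The cocycle identity then reads $c=e^{i\theta}c$, forcing $e^{i\theta}=1$.

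Finally, I would propagate the constancy to all of $\mathcal S$. With $e^{i\theta}=1$ we have $Q_s^n\varphi=\varphi$. For any $v\in\mathcal S$ and any $u\in V(\Gamma_\mu)$, invariance of $V(\Gamma_\mu)$ together with \eqref{Contracting inequa} gives $\mathbf{d}(G_n\!\cdot\! v,V(\Gamma_\mu))\leq \mathbf{d}(G_n\!\cdot\! v,G_n\!\cdot\! u)\leq a^n\mathbf{d}(v,u)\to 0$ deterministically in $(g_1,g_2,\dots)$. Since $\varphi$ is continuous and equal to $Mc$ on $V(\Gamma_\mu)$, $\varphi(G_n\!\cdot\! v)\to Mc$ uniformly in $(g_1,\dots,g_n)$, and using $\int q_n^s(v,G_n)\,\mu(dg_1)\cdots\mu(dg_n)=1$ we conclude $\varphi(v)=Q_s^n\varphi(v)\to Mc$, i.e.\ $\varphi\equiv Mc$ on $\mathcal S$. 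The main obstacle is the step identifying $h$ as constant on $V(\Gamma_\mu)$: it crucially needs both the deterministic contraction \eqref{Contracting inequa} and the fact, derived from strict positivity of $q_1^s$, that the cocycle equation $h(g\cdot v)=e^{i\theta}h(v)$ holds pointwise for all $g\in\supp\mu$ rather than merely almost surely.
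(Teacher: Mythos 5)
Your proof is correct, and its first half coincides with the paper's own argument: the maximum-modulus saturation based on $Q_s1=1$ and the strict positivity of the kernel $q_1^s$, the passage from ``$\mu$-a.s.'' to all $g\in\supp\mu$ by continuity, the use of minimality of $V(\Gamma_\mu)$ to get $|\varphi|\equiv\|\varphi\|_\infty$ there (the paper phrases this via the closed invariant set $\{v:|\varphi(v)|=\|\varphi\|_\infty\}$ rather than the orbit closure of a maximizer, which is equivalent), and the second saturation yielding the pointwise phase identity $\varphi(g\cdot v)=e^{i\theta}\varphi(v)$ on $\supp\mu\times V(\Gamma_\mu)$. Where you genuinely diverge is in the conclusion. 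The paper integrates the phase identity to obtain $P^n\varphi=e^{in\theta}\varphi$ on $V(\Gamma_\mu)$ and invokes the ergodic convergence $P^n\varphi\to\nu(\varphi)$ to force $e^{i\theta}=1$, and then obtains constancy on all of $\mathcal{S}$ by a second minimality argument with the set where $\varphi$ attains its infimum. You instead exploit the deterministic contraction \eqref{Contracting inequa} twice: first to show the unimodular phase $h$ is constant on $V(\Gamma_\mu)$, whence the cocycle identity gives $e^{i\theta}=1$; then, writing $\varphi=Q_s^n\varphi$ and using $\mathbf{d}(G_n\cdot v,V(\Gamma_\mu))\leq a^n$ together with \eqref{DistanPosi} and $\int q_n^s(v,G_n)\,\mu(dg_1)\cdots\mu(dg_n)=1$, to propagate the constant value to every $v\in\mathcal{S}$. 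Your route is more self-contained (it does not need the convergence $P^n\varphi\to\nu(\varphi)$) and its final step is more transparent than the paper's terse infimum-set argument, at the cost of using \ref{Condi-FK} through \eqref{Contracting inequa} -- which the lemma assumes anyway. The only caveat, shared with the paper's proof, is the implicit assumption $\varphi\not\equiv0$ (needed when you divide by $M$, and without which $e^{i\theta}=1$ cannot be concluded at all).
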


The proof of this lemma can be carried out in the same way as in \cite[Theorem 2.6]{GL16}, and therefore will be omitted.

By \cite[Theorem 6]{Ros1964}, 
for any equicontinuous Markov operator $Q$ on $\mathcal{C(S)}$, if
the equation $Q \varphi=e^{i\theta} \varphi$, $\varphi\in \mathcal{C(S)}$
implies $e^{i\theta}=1$ and $\varphi$ is a constant,
then 
$Q^n \varphi$ converges 
to $\pi(\varphi)$ as $n\to \infty$, 
uniformly in $\varphi \in \mathcal{C(S)}$,  
where $\pi$ is the unique invariant measure of $Q$.
Therefore, by Lemmas \ref{equiconti Q_s} and \ref{irredu Q u 01},
for any $s \in I_{\mu}$,
there exists a unique invariant probability measure $\pi_s$ on $\mathcal{S}$
such that uniformly in $\varphi\in \mathcal{C(S)}$,
\begin{align} \label{limit Q_s 01}
\lim_{n\to\infty}Q_s^n\varphi=\pi_s(\varphi).  
\end{align}
Since $Q_s \pi_s = \pi_s$ and $\pi_s$ is the unique invariant measure of $Q_s$, we get that for $\varphi\in \mathcal{C(S)}$, 
\begin{align} \label{limit Q_s 02}
\pi_s(\varphi)=\frac{\nu_{s}(\varphi r_{s})}{\nu_{s}(r_{s})}. 
\end{align}

We next show the uniqueness and continuity of the eigenfunction $r_{s}$
and the eigenmeasure $\nu_{s}$.

\begin{lemma} \label{unique r su et nu su}  
Assume either \ref{Condi-FK} when $s \in I_{\mu}^-$, or \ref{Condi-FK-weak}  when $s \in I_{\mu}^+$.  \\
1.   
For any $s \in I_{\mu}$, 
the eigenfunction $r_{s}$ is unique up to a scaling constant  
and the eigenmeasure $\nu_{s}\in \mathcal{P(S_\epsilon)}$ is unique. 
In addition, $\supp \nu_{s}=V(\Gamma_{\mu})$.  \\
2.  
The mappings $s\mapsto r_{s}$ and $s\mapsto \nu_{s}$ 
are continuous on $I_{\mu}$ with respect to the uniform topology and weak convergence topology, respectively. 
\end{lemma}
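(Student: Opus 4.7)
The plan is to reduce everything to the unique ergodicity of the Markov operator $Q_s$ from \eqref{limit Q_s 01}--\eqref{limit Q_s 02}: both uniqueness statements will follow from the facts that $1$ is a simple eigenvalue of $Q_s$ on $\mathcal{C(S)}$ and that $Q_s$ admits a unique invariant probability measure $\pi_s$; the continuity in $s$ will then be extracted via a compactness plus uniqueness argument.

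For uniqueness, I would first observe that $r_s>0$ everywhere on $\mathcal{S}$, since $\langle v,u\rangle\geq\epsilon$ whenever $v\in\mathcal{S}$ and $u\in\supp\nu_s^*\subseteq\mathcal{S}_\epsilon$. If $\widetilde r_s$ is another continuous $P_s$-eigenfunction with eigenvalue $\kappa(s)$, then by \eqref{defi Q_s} the continuous ratio $\varphi:=\widetilde r_s/r_s$ satisfies $Q_s\varphi=\varphi$, and \eqref{limit Q_s 01} forces $\varphi\equiv\pi_s(\varphi)$, a constant. Similarly, if $\widetilde\nu_s\in\mathcal{P(S_\epsilon)}$ is another eigenmeasure of $P_s$ for $\kappa(s)$, the probability measure $d\widetilde\pi_s:=r_s\,d\widetilde\nu_s/\widetilde\nu_s(r_s)$ is $Q_s$-invariant (by dualising the identity $Q_s\varphi=\kappa(s)^{-1}r_s^{-1}P_s(\varphi r_s)$ against $\widetilde\nu_s$), so by uniqueness of $\pi_s$ it coincides with $\pi_s=\nu_s r_s/\nu_s(r_s)$; this forces $\widetilde\nu_s=\nu_s$.

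For the support identity, $\supp\nu_s=\supp\pi_s$ because $r_s>0$, and the inclusion $V(\Gamma_\mu)\subseteq\supp\pi_s$ is already contained in Proposition \ref{change-measure-neg01}. For the reverse inclusion, $V(\Gamma_\mu)$ is closed and $Q_s$-invariant, because the kernel $q_1^s(v,g)\mu(dg)$ is carried by the $\Gamma_\mu$-action on $\mathcal{S}$; hence any weak limit $\tilde\pi$ of the Ces\`aro averages $\frac{1}{n}\sum_{k=0}^{n-1}Q_s^k\delta_{v_0}$ with $v_0\in V(\Gamma_\mu)$ is a $Q_s$-invariant probability measure supported in $V(\Gamma_\mu)$. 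By the uniqueness just established, $\tilde\pi=\pi_s$, whence $\supp\pi_s\subseteq V(\Gamma_\mu)$.

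For the continuity, I would fix $s_0\in I_\mu^+\cup I_\mu^-$ and a sequence $s_n\to s_0$, and use the continuity of $\kappa(s)=e^{\Lambda(s)}$ together with compactness. Every weak limit $\widetilde\nu$ of a subsequence of $\{\nu_{s_n}\}$ is still supported in $\mathcal{S}_\epsilon$, with $\epsilon$ from Lemma \ref{lem equiv Kesten} (depending only on $\mu$). Passing to the limit in $\nu_{s_n}(P_{s_n}\varphi)=\kappa(s_n)\nu_{s_n}(\varphi)$ via a dominated-convergence argument, with a $\mu$-integrable envelope $\|g\|^{s^+}\vee\|g\|^{s^-}$ valid for $s^\pm$ bracketing a compact neighbourhood of $s_0$ inside $I_\mu^+\cup I_\mu^-$, yields $\widetilde\nu P_{s_0}=\kappa(s_0)\widetilde\nu$; by uniqueness, $\widetilde\nu=\nu_{s_0}$, so $\nu_{s_n}\to\nu_{s_0}$ weakly. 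The same argument applied to $P_s^*$ gives $\nu_{s_n}^*\to\nu_{s_0}^*$ weakly, and substituting into the representation $r_s(v)=\int_{\mathcal{S}}\langle v,u\rangle^s\,d\nu_s^*(du)$, whose integrand is bounded and jointly continuous on $\mathcal{S}\times\mathcal{S}_\epsilon$ for $s$ in compact sets, yields the uniform convergence $r_{s_n}\to r_{s_0}$. The main obstacle is the support identity $\supp\nu_s=V(\Gamma_\mu)$: the reverse inclusion requires combining the topological minimality of $V(\Gamma_\mu)$ with the measure-theoretic uniqueness of $\pi_s$, which the Ces\`aro-average construction above accomplishes naturally; once this and the uniqueness claims are in place, the continuity reduces to a routine compactness plus dominated-convergence argument.
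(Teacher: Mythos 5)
Your proposal is correct and follows essentially the same route as the paper, whose proof of this lemma simply invokes Proposition \ref{change-measure-neg01} and Lemma \ref{irredu Q u 01} and refers to \cite[Corollary 4.12 and Proposition 4.13]{BDGM14}: uniqueness is reduced to the unique ergodicity of $Q_s$ expressed by \eqref{limit Q_s 01}--\eqref{limit Q_s 02}, and continuity is obtained by compactness of $\mathcal{P(S_\epsilon)}$ together with uniqueness and a dominated-convergence passage to the limit in the eigen-equations. The points you leave implicit (continuity of $\kappa$ up to the endpoints of $I_{\mu}^+ \cup I_{\mu}^-$, the fact that any probability eigenmeasure must have eigenvalue $\kappa(s)$, and uniqueness of $\nu_s^*$ via the same argument for the transposed matrices) are routine and consistent with the cited scheme.
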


Based on Proposition \ref{change-measure-neg01} and Lemma \ref{irredu Q u 01}, 
the proof of the first assertion of Lemma \ref{unique r su et nu su}  
can be carried out in the same way as that of \cite[Corollary 4.12]{BDGM14}. 
Following the proof of \cite[Proposition 4.13]{BDGM14}, 
 using the first assertion and the fact that $\supp \nu_{s} \subseteq \mathcal S_{\epsilon}$,
one can obtain the second assertion of Lemma \ref{unique r su et nu su}.

We proceed to establish spectral gap properties of the Markov operator $Q_s$ by using the well-known theorem of 
Ionescu-Tulcea and Marinescu (see \cite{IM1950} or \cite[Theorem II.5]{HH01}).  
Below we denote by $\mathscr{L(B_{\gamma},B_{\gamma})}$ 
the set of bounded linear operators from $\scr B_{\gamma}$ to $\scr B_{\gamma}$
equipped with the operator norm $\left\| \cdot \right\|_{\scr B_{\gamma} \to \scr B_{\gamma}}$,
by $\rho(A)$ the spectral radius of an operator $A$ acting on $\scr B_{\gamma}$ 
and by $A|_E$ the operator $A$ restricted to the subspace $E$.

\begin{proposition} \label{spectra gap Q su 01}
Assume either \ref{Condi-FK} when $s \in I_{\mu}^-$, or \ref{Condi-FK-weak}  when $s \in I_{\mu}^+$.
 Let $s\in I_{\mu}^{\circ} \setminus \{0\}$. 
Then, for any $0 < \gamma \leq \min \{|s|, 1\}$, 
we have $Q_s \in \mathscr{L(B_{\gamma},B_{\gamma})}$ and $Q_s^n = \Pi_s + N^n_{s}$
 for any $n\geq1$, 
where 
$\Pi_s$ is a rank-one projection with $\Pi_s(\varphi)(v) = \pi_s(\varphi)$ 
for $\varphi \in \scr B_{\gamma}$ and $v\in \mathcal{S}$, 
and $N_s$ 
satisfies $\Pi_s N_s = N_s \Pi_s = 0$ 
and $\rho(N_s) < a_s$ for some constant $a_s \in (0,1)$. 
\end{proposition}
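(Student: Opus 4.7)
The plan is to apply the Ionescu-Tulcea and Marinescu (ITM) theorem to the Markov operator $Q_s$ acting on $\scr B_\gamma$, using as inputs the Doeblin-Fortet inequality of Lemma \ref{equiconti Q_s}, the compact embedding $\scr B_\gamma \hookrightarrow \mathcal{C(S)}$, the aperiodicity statement of Lemma \ref{irredu Q u 01}, and the uniform convergence \eqref{limit Q_s 01}. First I would verify that $Q_s \in \mathscr{L(B_{\gamma},B_{\gamma})}$: since $Q_s$ is Markov, $\|Q_s \varphi\|_\infty \leq \|\varphi\|_\infty$; and for the H\"older seminorm the arguments of Lemmas \ref{kernel inequa coeff 01} and \ref{equiconti Q_s} extend from the natural exponent $\bar s = \min\{|s|,1\}$ to any $\gamma \in (0, \bar s]$, since both the kernel bound in Lemma \ref{kernel inequa coeff 01} and the contraction \eqref{Contracting inequa} can be weakened from $\mathbf d(\cdot,\cdot)^{\bar s}$ to $\mathbf d(\cdot,\cdot)^\gamma$ using boundedness of $\mathbf d$ by $1$ on $\mathcal S$. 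This yields the Doeblin-Fortet estimate
\[
\|Q_s^n \varphi\|_\gamma \leq c_s \|\varphi\|_\infty + a^n [\varphi]_\gamma, \qquad n \geq 1,
\]
for some $c_s > 0$ and some $a \in (0,1)$ (depending on $\gamma$), which in particular shows that $\rho(Q_s) \leq 1$ on $\scr B_\gamma$.

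Next, by the Arzel\`a-Ascoli theorem applied on the compact metric space $(\mathcal S, \mathbf d)$, the closed unit ball of $\scr B_\gamma$ is relatively compact in $(\mathcal{C(S)}, \|\cdot\|_\infty)$. Combined with the Doeblin-Fortet estimate, the ITM theorem then implies that $Q_s$ is quasi-compact on $\scr B_\gamma$: its essential spectral radius is at most $a$, and the part of its spectrum in $\{|z| > a\}$ consists of finitely many eigenvalues of finite multiplicity. Since $Q_s \mathbf 1 = \mathbf 1$, we have $\rho(Q_s) = 1$. By Lemma \ref{irredu Q u 01} every unimodular eigenvalue of $Q_s$ on $\scr B_\gamma$ must equal $1$ with a constant eigenfunction, so the peripheral spectrum reduces to the eigenvalue $1$ with one-dimensional eigenspace $\bb C \cdot \mathbf 1$.

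To exclude a Jordan block at $1$ and identify the rank-one spectral projection $\Pi_s$, I would invoke the uniform convergence $Q_s^n \varphi \to \pi_s(\varphi)$ from \eqref{limit Q_s 01} in combination with the Doeblin-Fortet estimate: together they force $\sup_n \|Q_s^n\|_{\scr B_\gamma \to \scr B_\gamma} < \infty$, which rules out generalized eigenvectors at $1$. The spectral projection is then necessarily $\Pi_s(\varphi) = \pi_s(\varphi)\, \mathbf 1$, with $\pi_s$ described by \eqref{limit Q_s 02}. Setting $N_s := Q_s - \Pi_s$, the usual spectral decomposition gives $\Pi_s N_s = N_s \Pi_s = 0$ and $\rho(N_s) < a$, and iteration yields $Q_s^n = \Pi_s + N_s^n$. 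The main technical step I expect is the extension of the Doeblin-Fortet estimate from the exponent $\bar s$ to arbitrary $\gamma \in (0, \bar s]$ with sufficient control of the constants; once this is in place, the rest is a standard application of the ITM theorem together with the structural lemmas already established.
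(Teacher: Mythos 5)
Your proposal is correct and follows essentially the same route as the paper: the Ionescu--Tulcea--Marinescu theorem applied to $Q_s$ on $\scr B_\gamma$, with the Doeblin--Fortet bound of Lemma \ref{equiconti Q_s}, the Arzel\`a--Ascoli compactness into $(\mathcal{C(S)},\|\cdot\|_\infty)$, and Lemma \ref{irredu Q u 01} to reduce the peripheral spectrum to the simple eigenvalue $1$. The only differences are cosmetic: you make explicit the extension of the Doeblin--Fortet estimate from the exponent $\bar s$ to any $\gamma\leq\bar s$ (valid since $\mathbf d\leq 1$) and you exclude a Jordan block at $1$ via power-boundedness and \eqref{limit Q_s 01}, steps the paper delegates to Lemma III.3 of \cite{HH01}.
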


\begin{proof}
We first verify the following three claims.

\textit{Claim 1:} There exists a constant $c_s$ such that $\|Q_s \varphi\|_{\infty} \leq c_s\|\varphi\|_{\infty}$
for any $\varphi \in \scr B_{\gamma}$. 
This is clear by the definition of $Q_s$ (cf.\ \eqref{defi Q_s}). 

\textit{Claim 2:} The operator $Q_s$
is conditionally compact from 
$(\scr B_{\gamma}, \|\cdot\|_\gamma)$ to $(\mathcal{C(S)}, \|\cdot\|_\infty)$.
Since $Q_s$ is a Markov operator, the set
$\{ Q_s \varphi: \|\varphi\|_\gamma\leq 1\}$ is uniformly bounded. 
This, together with the equicontinuity of $Q_s$ (by \eqref{equiconti Q_s}), proves the claim by 
applying the Arzel\`{a}-Ascoli theorem.

\textit{Claim 3:} The Doeblin-Fortet inequality holds for $Q_s$.
This is proved in Lemma \ref{equiconti Q_s}. 

Applying the theorem of Ionescu-Tulcea and Marinescu to $Q_s$, 
we get that  $Q_s$ is quasi-compact, that is, 
$\scr B_{\gamma}$ can be decomposed into two $Q_s$ invariant closed subspaces 
$\scr B_{\gamma}=E\oplus F$ such that 
$\dim E < \infty$, each eigenvalue of $Q_s|_{E}$ has modulus $\rho(Q_s)$
and $\rho(Q_s|_{F})<\rho(Q_s)$.
Since $\rho(Q_s)=1$, it follows from Lemma \ref{irredu Q u 01} that $\dim E=1$.
We conclude the proof by applying Lemma III.3 in \cite{HH01}.
\end{proof}

As a direct consequence of Proposition \ref{spectra gap Q su 01} and \eqref{defi Q_s}, we get the following 

\begin{corollary}  \label{transfer operator}
Assume either \ref{Condi-FK} when $s \in I_{\mu}^-$, or \ref{Condi-FK-weak}  when $s \in I_{\mu}^+$.
  Let $s\in (I_{\mu}^+)^{\circ} \cup (I_{\mu}^-)^{\circ}$. 
Then, for any $0 < \gamma \leq \min \{|s|, 1\}$, 
we have $P_s \in \mathscr{L(B_{\gamma},B_{\gamma})}$ and $P_s^n = \kappa(s)^n M_s + L_s^n$ for any $n\geq1$, 
where $M_s : = \nu_s \otimes r_s$ is a rank-one projection on $\scr B_{\gamma}$ defined by
$M_s \varphi = \frac{ \nu_s(\varphi) }{ \nu_s(r_s) }  r_s$
for $\varphi \in \scr B_{\gamma}$,
and $L_s \in \mathscr{L(B_{\gamma},B_{\gamma})}$ satisfies $M_s L_s = L_s M_s =0$ and $\rho(L_s) < \kappa(s).$ 
\end{corollary}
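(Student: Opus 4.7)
The plan is to transfer the spectral decomposition of $Q_s$ from Proposition \ref{spectra gap Q su 01} to $P_s$ by the conjugation relation built into the definition \eqref{defi Q_s}. Rewriting \eqref{defi Q_s} as $P_s\varphi = \kappa(s)\, r_s \cdot Q_s(\varphi/r_s)$, one sees that $P_s$ is conjugate (up to the factor $\kappa(s)$) to $Q_s$ via the multiplication operator $T_s\varphi := \varphi/r_s$. By Proposition \ref{change-measure-neg01}, $r_s$ is strictly positive on $\mathcal{S}$, continuous, and $\bar{s}$-H\"older continuous with respect to $\mathbf{d}$; since the hypothesis is $0 < \gamma \leq \min\{|s|,1\} = \bar{s}$, both $r_s$ and $1/r_s$ belong to $\scr B_\gamma$ (a Banach algebra under pointwise multiplication), so $T_s$ is a bounded invertible operator on $\scr B_\gamma$ with bounded inverse $T_s^{-1}\psi = r_s \cdot \psi$.

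The first step is to iterate the conjugation: $P_s^n = \kappa(s)^n T_s^{-1} Q_s^n T_s$ for every $n\geq 1$. The second step is to substitute the decomposition $Q_s^n = \Pi_s + N_s^n$ provided by Proposition \ref{spectra gap Q su 01}. For the projection part, using the identification $\pi_s(\psi) = \nu_s(\psi r_s)/\nu_s(r_s)$ from \eqref{limit Q_s 02}, I compute
\begin{align*}
T_s^{-1} \Pi_s T_s (\varphi)(v)
= r_s(v) \,\pi_s(\varphi/r_s)
= r_s(v) \,\frac{\nu_s(\varphi)}{\nu_s(r_s)}
= M_s \varphi(v),
\end{align*}
which is precisely the rank-one projection $M_s = \nu_s \otimes r_s$ in the stated form. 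For the remainder, define $L_s := \kappa(s)\, T_s^{-1} N_s T_s$, so that by iteration $L_s^n = \kappa(s)^n T_s^{-1} N_s^n T_s$; adding the two contributions yields the desired identity $P_s^n = \kappa(s)^n M_s + L_s^n$.

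The remaining assertions are routine. The orthogonality $M_s L_s = L_s M_s = 0$ is inherited from $\Pi_s N_s = N_s \Pi_s = 0$ through the conjugation by $T_s$. Since $T_s$ is a bounded automorphism of $\scr B_\gamma$, the spectrum of $L_s$ coincides (up to the scalar $\kappa(s)$) with that of $N_s$, giving $\rho(L_s) = \kappa(s)\rho(N_s) < \kappa(s)\,a < \kappa(s)$ for some $a\in(0,1)$. The membership $P_s \in \mathscr{L}(\scr B_\gamma,\scr B_\gamma)$ follows because $P_s = \kappa(s) T_s^{-1} Q_s T_s$ is a composition of bounded operators on $\scr B_\gamma$.

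The only mildly technical point, and thus the natural place to slip, is the bookkeeping verification that $T_s$ and $T_s^{-1}$ are bounded on $\scr B_\gamma$. This uses (i) $\inf_{v\in\mathcal{S}} r_s(v) > 0$ from Proposition \ref{change-measure-neg01}, which ensures $1/r_s \in \mathcal{C}(\mathcal{S})$, and (ii) the estimate $|1/r_s(u) - 1/r_s(v)| \leq |r_s(u) - r_s(v)|/(\inf r_s)^2$ combined with the $\bar{s}$-H\"older bound on $r_s$, giving $1/r_s \in \scr B_{\bar{s}} \subseteq \scr B_\gamma$. Once this is in hand, the corollary is a transparent consequence of Proposition \ref{spectra gap Q su 01}.
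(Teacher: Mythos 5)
Your proposal is correct and is essentially the paper's argument made explicit: the paper obtains the corollary directly from Proposition \ref{spectra gap Q su 01} together with the relation \eqref{defi Q_s}, which is exactly the conjugation $P_s=\kappa(s)\,T_s^{-1}Q_sT_s$ by multiplication with $r_s$ that you spell out. Your verification that $T_s^{\pm1}$ are bounded on $\scr B_\gamma$ (using positivity and $\bar{s}$-H\"older continuity of $r_s$ from Proposition \ref{change-measure-neg01}, plus $\mathbf{d}\leq 1$ so that $\scr B_{\bar s}\subseteq\scr B_\gamma$) and the identification of $T_s^{-1}\Pi_sT_s$ with $M_s$ via \eqref{limit Q_s 02} are precisely the routine details the paper leaves implicit.
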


\subsection{Spectral gap properties of the perturbed transfer operator $R_{s, z}$}  
For any $s \in I_{\mu}$, $q = \Lambda'(s)$ 
 and $z \in \bb C$ with $s + \Re z \in I_{\mu}$, 
define the perturbed operator $R_{s,z}$ as follows: for $\varphi\in \mathcal{C(S)}$, 
\begin{align} \label{def R tu 01}
R_{s, z}\varphi(v) = \mathbb{E}_{\mathbb{Q}_{s}^{v}} \left[ e^{ z( \log \|g v\| - q ) } \varphi(g \cdot v) \right],  
\quad v \in \mathcal{S}.  
\end{align}
Applying \eqref{cocycle02} and the induction argument, 
one can check that for any $n\geq 1$,
\begin{align*}
R_{s, z}^n \varphi(v) =\mathbb{E}_{\mathbb{Q}_{s}^{v}} \left[ e^{ z( \log \|G_n v\| - nq ) } \varphi(G_n \cdot v)  \right],
\quad v \in \mathcal{S}. 
\end{align*}
Now we use the spectral gap theory for the norm cocycle $\log \frac{\|G_n v\|}{\|v\|}$ shown in Section \ref{Sect-Spec-norm-cocycle}
and the perturbation theorem in \cite[Theorem III.8]{HH01}
to obtain the spectral gap properties of $R_{s, z}$. 
For $\delta>0$, denote $B_{\delta}(0) = \{z \in \mathbb{C}: |z| < \delta\}$. 

\begin{proposition} \label{perturbation thm 02}
Assume either \ref{Condi-FK} when $s \in I_{\mu}^-$, or \ref{Condi-FK-weak}  when $s \in I_{\mu}^+$.
Let $s \in I_{\mu}^{\circ} \setminus \{0\}$. 
Then, 
for any $0 < \gamma \leq \min \{|s|, 1\}$, 
there exists a constant $\delta>0$ such that 
for any $z \in B_{\delta}(0)$ and $n \geq 1$,
\begin{align}  \label{Pertur iden 01}
R^{n}_{s,z}=\lambda^{n}_{s,z} \Pi_{s,z} + N^{n}_{s,z},
\end{align}
where $z \mapsto \lambda_{s,z} = e^{-qz} \frac{\kappa(s+z)}{\kappa(s)}: B_{\delta}(0) \to \mathbb{C}$ is analytic,  
$z \mapsto \Pi_{s,z}$ and $z \mapsto N_{s,z}$: $B_{\delta}(0) \to \mathscr{L(B_{\gamma},B_{\gamma})}$
are analytic 
with $\Pi_{s,z} N_{s,z}=N_{s,z}\Pi_{s,z}=0$ and $\Pi_{s,0}(\varphi)(v) = \pi_s(\varphi)$ for $\varphi \in \scr B_{\gamma}$ 
and $v\in \mathcal{S}$. 
Moreover, for any compact set $J \subseteq (I_{\mu}^+)^{\circ} \cup (I_{\mu}^-)^{\circ}$, 
it holds that 
$$
\sup_{s \in J} \sup_{z \in B_{\delta}(0)} \rho(N_{s,z})  <1.
$$ 
\end{proposition}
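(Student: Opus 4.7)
The plan is to reduce the spectral analysis of $R_{s,z}$ to the already developed theory for the transfer operator $P_{s+z}$ (Corollary~\ref{transfer operator}) via a conjugation, and then to apply the analytic perturbation theorem of \cite[Theorem III.8]{HH01}. Taking $n = 1$ in the change-of-measure formula \eqref{change measure equ coeff} with $h(g\!\cdot\! v, \log\|gv\|) = e^{-qz}\|gv\|^{z}\varphi(g\!\cdot\! v)$ yields the conjugation identity
\begin{align*}
R_{s,z}\varphi(v) \;=\; \frac{e^{-qz}}{\kappa(s)\, r_s(v)} \, P_{s+z}\!\left(r_s \varphi\right)(v), \qquad \varphi \in \scr B_\gamma,\ v \in \mathcal S.
\end{align*}
In particular $R_{s,0}=Q_s$, so Proposition~\ref{spectra gap Q su 01} provides the unperturbed decomposition: on $\scr B_\gamma$ the operator $R_{s,0}$ has a simple isolated leading eigenvalue $1$, rank-one eigenprojection $\Pi_{s,0}\varphi(v)=\pi_s(\varphi)$, and spectral gap $\rho(Q_s-\Pi_{s,0}) < 1$.

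Next I would show that $z\mapsto R_{s,z}$ is analytic from a disc $B_\delta(0)$ into $\mathscr L(\scr B_\gamma, \scr B_\gamma)$. Since $r_s$ is strictly positive and $\bar s$-Hölder on $\mathcal S$ by Proposition~\ref{change-measure-neg01}, multiplication by $r_s$ and $1/r_s$ are bounded on $\scr B_\gamma$, so it suffices to establish analyticity of $z\mapsto P_{s+z}$. Expanding termwise $\|gv\|^{s+z} = \|gv\|^s\sum_{k\ge 0}\frac{z^k}{k!}(\log\|gv\|)^k$, the $k$-th coefficient is $\varphi\mapsto \int(\log\|gv\|)^k\|gv\|^s\varphi(g\!\cdot\! v)\mu(dg)$, whose $\scr B_\gamma$-norm is controlled by the Doeblin--Fortet type bounds used in Lemma~\ref{equiconti Q_s} combined with the integrability $\int \|g\|^{s+\Re z}(\log N(g))^k\mu(dg)<\infty$ for $|\Re z|$ small; the latter holds because $s$ lies in the interior of $I_\mu^+\cup I_\mu^-$. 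Summing in $k$ gives analyticity on some $B_\delta(0)$.

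Once operator-norm analyticity is in hand, \cite[Theorem III.8]{HH01} applies and furnishes, on a possibly smaller disc, the decomposition \eqref{Pertur iden 01} with $z\mapsto \lambda_{s,z}$, $z\mapsto \Pi_{s,z}$, $z\mapsto N_{s,z}$ analytic, $\Pi_{s,z}$ of rank one, $\Pi_{s,z}N_{s,z}=N_{s,z}\Pi_{s,z}=0$, and $\rho(N_{s,z})<|\lambda_{s,z}|$. To identify $\lambda_{s,z}$, test against $\varphi_z := r_{s+z}/r_s$, which is well-defined and lies in $\scr B_\gamma$ by Proposition~\ref{change-measure-neg01} and Lemma~\ref{unique r su et nu su}; using $P_{s+z}r_{s+z}=\kappa(s+z)r_{s+z}$ I obtain
\begin{align*}
R_{s,z}\varphi_z(v) \;=\; \frac{e^{-qz}}{\kappa(s)r_s(v)}\,P_{s+z}(r_{s+z})(v) \;=\; \frac{e^{-qz}\kappa(s+z)}{\kappa(s)}\,\varphi_z(v),
\end{align*}
forcing $\lambda_{s,z} = e^{-qz}\kappa(s+z)/\kappa(s)$.

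For the uniformity on a compact set $J\subseteq(I_\mu^+)^\circ\cup(I_\mu^-)^\circ$, I would view $R_{s,z}$ jointly as a perturbation of $R_{s_0,0}$ with both $s-s_0$ and $z$ as perturbation parameters. The joint map $(s,z)\mapsto R_{s,z}$ is continuous into $\mathscr L(\scr B_\gamma,\scr B_\gamma)$ (via continuity of $s\mapsto r_s,\nu_s$ provided by Lemma~\ref{unique r su et nu su}), hence the perturbation theorem applies locally with constants depending continuously on $s_0\in J$; a finite subcover then produces a uniform $\delta>0$ and the uniform gap $\sup_{s\in J}\sup_{z\in B_\delta(0)}\rho(N_{s,z})<1$. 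The main obstacle I anticipate is the second step when $s<0$: the coefficients $(\log\|gv\|)^k\|gv\|^s$ are unbounded in $v\in\mathcal S$, so verifying operator-norm analyticity requires carefully combining Lemma~\ref{Lem-Contin-Cocycle} (for Hölder moduli) with the two-sided bound $c\|g\|\le\|gv\|\le\|g\|$ of Lemma~\ref{lemma kappa 1}, so that all integrals against $\mu$ remain finite and admit good Hölder seminorms in a full neighborhood of $s$ in $I_\mu^-$.
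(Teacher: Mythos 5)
Your overall skeleton matches the paper's: identify $R_{s,0}=Q_s$, use Proposition \ref{spectra gap Q su 01} for the unperturbed spectral gap, verify boundedness/analyticity of $z\mapsto R_{s,z}$ on $\scr B_\gamma$, and invoke \cite[Theorem III.8]{HH01}. Where you genuinely diverge is in the two technical inputs. The paper establishes $R_{s,z}\in\mathscr{L(B_{\gamma},B_{\gamma})}$ by direct kernel estimates on \eqref{def R tu 01} (via \eqref{change measure equ coeff}, Lemma \ref{lemma kappa 1}, Lemma \ref{Lem-Contin-Cocycle} and an elementary bound on $|e^{z_1}-e^{z_2}|$), gets holomorphy of $z\mapsto R_{s,z}$ from \cite[Theorem V.3.1]{Yos80}, and simply quotes \cite[Proposition 3.3]{XGL19a} for the formula $\lambda_{s,z}=e^{-qz}\kappa(s+z)/\kappa(s)$; you instead conjugate $R_{s,z}=\frac{e^{-qz}}{\kappa(s)r_s}P_{s+z}(r_s\,\cdot)$, prove analyticity by a power-series expansion in $z$, and identify the eigenvalue by exhibiting the explicit eigenfunction $r_{s+z}/r_s$. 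Your route buys a self-contained identification of $\lambda_{s,z}$ from Corollary \ref{transfer operator}, and your treatment of the uniformity in $s\in J$ (joint continuity plus a finite subcover) is more explicit than the paper's, which leaves that point essentially unargued.

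Two caveats you should repair when writing this up. First, $r_{s+z}$ and $\kappa(s+z)$ are only defined for \emph{real} $z$ with $s+z\in I_{\mu}^+\cup I_{\mu}^-$, so your displayed computation $R_{s,z}\varphi_z=e^{-qz}\frac{\kappa(s+z)}{\kappa(s)}\varphi_z$ is legitimate only on the real segment; moreover exhibiting an eigenvalue does not by itself force it to be the \emph{dominant} one. You need to argue that for small real $z$ this eigenvalue is the unique one near $1$ (e.g.\ by continuity of the perturbed decomposition, or via the conjugation and Corollary \ref{transfer operator}, which places the rest of the spectrum of $P_{s+z}$ strictly inside $\kappa(s+z)$), and then extend the identity to complex $z\in B_\delta(0)$ by analyticity of both sides (the identity theorem), with $\kappa(s+\cdot)$ understood as the analytic continuation. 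Second, your compactness argument for $\sup_{s\in J}\sup_{z\in B_\delta(0)}\rho(N_{s,z})<1$ needs joint continuity of $(s,z)\mapsto R_{s,z}$ in the \emph{operator norm} on $\scr B_\gamma$ (with $\gamma\le\min_{t\in J}\{|t|,1\}$ fixed), and Lemma \ref{unique r su et nu su} only gives sup-norm continuity of $s\mapsto r_s$; to control the H\"older seminorm of differences you must supplement it with kernel estimates of the type in Lemma \ref{kernel inequa coeff 01}. Both points are standard and recoverable, so I regard them as presentation gaps rather than flaws in the approach.
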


\begin{proof}
We first show that $R_{s,z} \in \mathscr{L(B_{\gamma},B_{\gamma})}$. 
Using \eqref{change measure equ coeff}, Lemma \ref{lemma kappa 1}
and the fact that $r_{s}$ is strictly positive and bounded (cf.\ Proposition \ref{change of measure coeffi 01}), we get 
\begin{align}\label{Q usz 001}
\|R_{s,z} \varphi\|_{\infty} \leq c_s \|\varphi\|_{\infty} \mathbb{E}(\|g\|^{s+\Re z}). 
\end{align}
Note that, by Lemma \ref{Lem-Contin-Cocycle}, we have for any $g\in \Gamma_\mu$ and $v_1, v_2 \in \mathcal{S}$, 
\begin{align}\label{Inequality-log-v}
& \left| \log\|g v_1\| - \log\|g v_2\| \right|   \notag\\
& = \left| \log\|g v_1\| - \log\|g v_2\| \right| \mathds 1_{ \{ \mathbf{d}(v_1, v_2) \leq \frac{1}{2} \} } 
   +  \left| \log\|g v_1\| - \log\|g v_2\| \right|  \mathds 1_{ \{ \mathbf{d}(v_1, v_2) > \frac{1}{2} \} }  \notag\\
& \leq  c \mathbf{d}(v_1, v_2) +  2  \left| \log\|g v_1\| - \log\|g v_2\| \right|  \mathbf{d}(v_1, v_2)  \notag\\
& \leq  c' \mathbf{d}(v_1, v_2) +  c' (\log \|g \|)  \mathbf{d}(v_1, v_2)  \notag\\
& \leq  c'' \left( 1 + \log \|g \| \right) \mathbf{d}(v_1, v_2). 
\end{align}
Since for any $z_1, z_2 \in \bb C$ and $\gamma \in (0, 1]$,
\begin{align}\label{Inequa-ez1-ez2}
|e^{z_1} - e^{z_2}| \leq 2 \max \{ |z_1|^{1 - \gamma}, |z_2|^{1 - \gamma} \}  \max \{ e^{\Re  z_1},  e^{\Re  z_2} \} |z_1 - z_2|^{\gamma}, 
\end{align}
 using Lemma \ref{lemma kappa 1} and \eqref{Inequality-log-v}, we have
\begin{align}\label{Inequa-exp-zlog}
\left| e^{ z \log \|g v_1\| }  -   e^{ z \log \|g v_2\| }  \right|
& \leq c  (\log \|g\|)^{1- \gamma}  e^{(\Re z) \log \|g\|}  
  \left| \log\|g v_1\| - \log\|g v_2\| \right|^{\gamma}  \notag\\
&  \leq c'  (1 + \log \|g\|)  e^{(\Re z) \log \|g\|}  
  \mathbf{d}(v_1, v_2)^{\gamma}. 
\end{align}
From \eqref{Inequa-exp-zlog} and Lemma \ref{lemma kappa 1},
it follows that for any $v_1, v_2 \in \mathcal S$, 
\begin{align*} 
 |R_{s,z} \varphi(v_1) - R_{s,z} \varphi(v_2)|   
& \leq  c_s \|\varphi\|_{\infty}  
   \mathbb{E} \left[ \frac{1}{ \|g\|^{-s} }  \left| e^{ z \log \|g v_1\| }  -   e^{ z \log \|g v_2\| }  \right|  \right]    \notag\\
& \quad   +  c_s \mathbb{E} \left[  e^{ (s + \Re z) \log \|g\| }  \left| \varphi(g \cdot v_1) - \varphi(g \cdot v_2)  \right|  \right] 
  \notag\\
& \leq   c'_s \|\varphi\|_{\infty}  \bb E \left[ (1 + \log \|g\|)  e^{(s + \Re z) \log \|g\|}  \right] 
  \mathbf{d}(v_1, v_2)^{\gamma}   \notag\\
&\quad  +  c'_s  \|\varphi\|_{\gamma} \mathbb{E} \left[  e^{ (s + \Re z) \log \|g\| }  \right]  \mathbf{d}(v_1, v_2)^{\gamma}   \notag\\
& \leq  c''_s \|\varphi\|_{\gamma}  \bb E \left[ (1 + \log \|g\|)  e^{(s + \Re z) \log \|g\|}  \right] 
  \mathbf{d}(v_1, v_2)^{\gamma}. 
\end{align*}
Combining this with \eqref{Q usz 001} shows that
$R_{s,z} \in \mathscr{L(B_{\gamma},B_{\gamma})}$.

Since $R_{s,0} = Q_s$, by Proposition \ref{spectra gap Q su 01},
we get that $R_{s,0}$ has a simple dominate eigenvalue
and $\rho(R_{s,0}) = 1$. 
Since $z\mapsto e^{(s+z) \log \|gv\|}$ is analytic on $B_{\delta}(0)$ for some constant $\delta>0$, 
one can check that $z\mapsto R_{s,z}$ is holomorphic by applying \cite[Theorem V.3.1]{Yos80}.
Therefore, the assertions follow by applying the perturbation theorem (cf.\  \cite[Theorem III.8]{HH01}). 
The formula $\lambda_{s,z} = e^{-qz} \frac{\kappa(s+z)}{\kappa(s)}$ can be obtained in the same way as that in \cite[Proposition 3.3]{XGL19a}
and therefore we omit the details. 
\end{proof}

Now we show a Doeblin-Fortet inequality for the perturbed operator $R_{s,it}$.

\begin{lemma}\label{Lem-R-sit-DF-ineq}
Assume either \ref{Condi-FK} when $s \in I_{\mu}^-$, or \ref{Condi-FK-weak}  when $s \in I_{\mu}^+$.
Assume also that 
$\mathbb{E} [ \|g\|^{s} (1 + \log\|g\|)] < \infty$ for $s \in I_{\mu}$.
Then there exist constants $c_{s, n} > 0$ and $0<a<1$ such that 
for any $\bar{s}$-H\"older continuous function $\varphi$ on $\mathcal{S}$ and $n\geq1$, 
\begin{align}\label{Equi-contin-Inequ-Rsit}
[R_{s,it}^n \varphi]_{\bar{s}}
\leq  c_{s, n} \|\varphi\|_\infty + a^{\bar s n} [\varphi]_{\bar{s}}.
\end{align} 
In particular, for any $\varphi \in \mathcal{C(S)}$, the sequence $\{ R_{s,it}^n \varphi \}_{n \geq 1}$
is equicontinuous on $\mathcal{S}$. 
\end{lemma}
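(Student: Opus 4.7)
My plan is to adapt the proof of Lemma \ref{equiconti Q_s} (the Doeblin--Fortet inequality for $Q_s$) to the oscillatory perturbation $R_{s,it}$. Starting from the iterated representation
\begin{align*}
R_{s,it}^n\varphi(v) = \int \varphi(G_n\!\cdot\! v)\, e^{it(\log\|G_n v\|-nq)}\, q_n^s(v, G_n)\, \mu(dg_1)\cdots\mu(dg_n),
\end{align*}
I would decompose the difference $R_{s,it}^n\varphi(v_1)-R_{s,it}^n\varphi(v_2)$ into three pieces according to whether the integrand difference comes from (a) $\varphi(G_n\!\cdot\! v_1)-\varphi(G_n\!\cdot\! v_2)$, (b) the oscillatory factor $e^{it\log\|G_n v_1\|}-e^{it\log\|G_n v_2\|}$, or (c) the density $q_n^s(v_1,G_n)-q_n^s(v_2,G_n)$. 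Pieces (a) and (c) are handled exactly as in Lemma \ref{equiconti Q_s}: the contraction \eqref{Contracting inequa} together with the H\"older seminorm of $\varphi$ bounds (a) by $a^{n\bar s}[\varphi]_{\bar s}\mathbf{d}(v_1,v_2)^{\bar s}$ (using that $q_n^s(v_1,\cdot)\mu^{\otimes n}$ is a probability measure), and Lemma \ref{kernel inequa coeff 01} together with \eqref{Inequa-kappa-s} bounds (c) by $c_s\|\varphi\|_\infty\mathbf{d}(v_1,v_2)^{\bar s}$, uniformly in $n$.

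The main obstacle is the phase contribution (b), which has no counterpart in the analysis of $Q_s$; here one must exploit the projective contraction to compensate the loss from the $e^{it(\,\cdot\,)}$ factor. Writing the norm cocycle as the telescoping sum $\log\|G_n v\|=\sum_{k=0}^{n-1}\log\|g_{k+1}(G_k\!\cdot\! v)\|$ and applying the Lipschitz estimate of Lemma \ref{Lem-Contin-Cocycle} term by term, together with the contraction \eqref{Contracting inequa}, gives
\begin{align*}
|\log\|G_n v_1\|-\log\|G_n v_2\|| \le c\, \mathbf{d}(v_1,v_2)\sum_{k=0}^{n-1}(1+\log\|g_{k+1}\|)\,a^k.
\end{align*}
Combining this with the elementary bound $|e^{iu_1}-e^{iu_2}|\le 2^{1-\bar s}|u_1-u_2|^{\bar s}$ and the subadditivity of $x\mapsto x^{\bar s}$ on $[0,\infty)$ for $\bar s\in(0,1]$, the modulus of the phase difference is at most
\begin{align*}
c\, |t|^{\bar s}\, \mathbf{d}(v_1,v_2)^{\bar s}\sum_{k=0}^{n-1} a^{k\bar s}\,(1+\log\|g_{k+1}\|)^{\bar s}.
\end{align*}

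To finish, I would integrate this estimate against $q_n^s(v_1,G_n)\mu^{\otimes n}$. The crucial point is that the cocycle identity \eqref{cocycle02} yields the factorization $q_n^s(v,G_n)=\prod_{k=0}^{n-1}q_1^s(G_k\!\cdot\! v,g_{k+1})$, so that under the probability measure $\mathbb{Q}_s^{v_1}$ the sequence $(G_k\!\cdot\! v_1)_{k\ge 0}$ is a Markov chain with kernel $Q_s$. Conditioning on the past, each summand reduces to $\int(1+\log\|g\|)^{\bar s}q_1^s(u,g)\mu(dg)$, which by the positivity and boundedness of $r_s$ (Proposition \ref{change-measure-neg01}) and Lemma \ref{lemma kappa 1} is uniformly bounded in $u\in\mathcal{S}$ by $c_s\mathbb{E}[\|g\|^s(1+\log\|g\|)]$, finite by hypothesis. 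The geometric series $\sum_{k\ge 0} a^{k\bar s}$ then gives a bound on (b) of the form $c_{s,t}\|\varphi\|_\infty\mathbf{d}(v_1,v_2)^{\bar s}$, uniform in $n$. Adding the three contributions produces \eqref{Equi-contin-Inequ-Rsit} with $a$ replaced by $a^{\bar s}\in(0,1)$. Finally, for arbitrary $\varphi\in\mathcal{C(S)}$, the equicontinuity of $\{R_{s,it}^n\varphi\}_{n\ge 1}$ follows by the same density argument as at the end of Lemma \ref{equiconti Q_s}, since every continuous function on $\mathcal{S}$ is a uniform limit of $\bar s$-H\"older continuous functions.
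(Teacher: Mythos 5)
Your proposal is correct, and its skeleton (the three-term decomposition mirroring Lemma \ref{equiconti Q_s}, with the kernel difference handled by Lemma \ref{kernel inequa coeff 01} and \eqref{Inequa-kappa-s}, and the $\varphi$-difference by the contraction \eqref{Contracting inequa}) coincides with the paper's proof; the genuine difference lies in the oscillatory term, which is the only new ingredient of this lemma. The paper treats it crudely: it applies the estimate \eqref{Inequality-log-v} to the \emph{whole product} $G_n\in\Gamma_\mu$, bounds the kernel by $|q_n^s(v_2,G_n)|\le c_n\|g_n\|^s\cdots\|g_1\|^s$ via Lemma \ref{lemma kappa 1}, and ends up with the $n$-dependent constant $c_{s,n}\asymp\big[\bb E\,\|g\|^s(1+\log\|g\|)\big]^n$ appearing in the statement. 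You instead telescope the norm cocycle, use the contraction to get the geometric weights $a^k$, pass to the H\"older scale via $|e^{iu_1}-e^{iu_2}|\le 2^{1-\bar s}|u_1-u_2|^{\bar s}$ and subadditivity, and then exploit the factorization $q_n^s(v,G_n)=\prod_{k=0}^{n-1}q_1^s(G_k\!\cdot\!v,g_{k+1})$ to bound each summand conditionally by $c_s\,\bb E[\|g\|^s(1+\log\|g\|)]$, uniformly in the starting point; summing the geometric series yields a constant \emph{uniform in $n$} (depending on $|t|^{\bar s}$, harmless for fixed $t$). This is sharper than what the paper proves: with an $n$-uniform Doeblin--Fortet constant the equicontinuity of $\{R_{s,it}^n\varphi\}_{n\ge1}$ for $\varphi\in\mathcal{C(S)}$ follows cleanly from the density argument, whereas with a constant growing in $n$ that last step requires more care, so your refinement is actually welcome. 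Two small points of bookkeeping: for the $k=0$ term of the telescoping, $\mathbf{d}(v_1,v_2)$ may be close to $1$, so Lemma \ref{Lem-Contin-Cocycle} alone does not apply and you should invoke \eqref{Inequality-log-v} (or Lemma \ref{lemma kappa 1}) there -- the factor $(1+\log\|g_{1}\|)$ you kept is exactly the needed cushion; and the domination $(1+\log\|g\|)^{\bar s}\le 1+(1+\log\|g\|)$ together with $\bb E\|g\|^s<\infty$ is what reduces your conditional bound to the stated moment hypothesis.
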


\begin{proof}
By the definition of $R_{s,it}$, we write
\begin{align} \label{equiconti Q su I-Rsit}
&  |R_{s,it}^n \varphi(v_1) - R_{s,it}^n \varphi(v_2)|  \notag\\
& \leq   \left|  \int \varphi( G_n \!\cdot\! v_1)  e^{ it( \log \|G_n v_1\| - nq ) }
\left[ q_{n}^{s}(G_n, v_1)- q_{n}^{s}(G_n, v_2)   \right] \mu(dg_1) \ldots \mu(dg_n)  \right|  \notag\\
& \quad  +  \left| \int \left[ \varphi( G_n\! \cdot\! v_1)- \varphi( G_n\! \cdot\! v_2)  \right] 
   e^{ it( \log \|G_n v_1\| - nq ) }   q_{n}^{s}(G_n, v_2) \mu(dg_1) \ldots \mu(dg_n)  \right|  \notag\\
& \quad  +    \left| \int  \varphi( G_n\! \cdot\! v_2)  
  \left| e^{ it( \log \|G_n v_1\| - nq ) } -  e^{ it( \log \|G_n v_2\| - nq ) } \right|   q_{n}^{s}(G_n, v_2) \mu(dg_1) \ldots \mu(dg_n)  \right|  \notag\\
& =: I_1 + I_2 + I_3.  
\end{align}
For $I_1$, we use Lemma \ref{kernel inequa coeff 01} and \eqref{Inequa-kappa-s}  to get
\begin{align}  \label{equiconti Q su I 1-Rsit}
I_1   \leq  c_{s}\|\varphi\|_{\infty}   \frac{ \bb E (\| G_n \|^s) }{\kappa(s)^n} \mathbf{d}(v_1,v_2)^{\bar{s}}
\leq  c'_{s}\|\varphi\|_{\infty} \, \mathbf{d}(v_1,v_2)^{\bar{s}}.
\end{align}
For $I_2$, by \eqref{equiconti Q su I 2 posi}, 
there exists a constant $0 < a <1$ such that 
\begin{align} \label{equiconti Q su I 2 posi-Rsit}
I_2
\leq [\varphi]_{\bar{s}}  \,  a^{\bar s n}  \mathbf{d}(v_1, v_2)^{\bar{s}}.  
\end{align}
For $I_3$,  from \eqref{Inequa-ez1-ez2} and \eqref{Inequality-log-v}, 
it follows that for any $\gamma \in (0, 1]$, 
\begin{align*}
 \left| e^{ it( \log \|G_n v_1\| - nq ) } -  e^{ it( \log \|G_n v_2\| - nq ) } \right| 
& \leq c  (\log \|G_n\|)^{1- \gamma}  
  \left| \log\|G_n v_1\| - \log\|G_n v_2\| \right|^{\gamma}  \notag\\
&  \leq c'  (1 + \log \|G_n\|)
  \mathbf{d}(v_1, v_2)^{\gamma}. 
\end{align*}
By Lemma \ref{lemma kappa 1}, we get $\|g_2 g_1 v\| = \|g_2 \frac{g_1 v}{\|g_1 v\|} \|  \|g_1 v\| \geq c \|g_2\| \|g_1\|$, so that 
\begin{align*}
\left|  q_{n}^{s}(G_n, v_2) \right| \leq  \frac{c}{\kappa(s)^n} \|G_n v_2\|^s \leq  c_n  \|g_n\|^s \ldots \|g_1\|^s. 
\end{align*}
Therefore, 
\begin{align}\label{Inequa-exp-zlog-bb}
I_3  & \leq  c \|\varphi\|_{\infty}  \left| \int  (1 + \log \|G_n\|)  q_{n}^{s}(G_n, v_2) \mu(dg_1) \ldots \mu(dg_n)  \right|
 \mathbf{d}(v_1, v_2)^{\gamma}   \notag\\
& \leq  c_n  \|\varphi\|_{\infty}   \big[ \bb E  \|g\|^s  (1 + \log \|g\|)  \big]^n    \mathbf{d}(v_1, v_2)^{\gamma}.  
\end{align}
Combining \eqref{equiconti Q su I-Rsit}, \eqref{equiconti Q su I 1-Rsit}, \eqref{equiconti Q su I 2 posi-Rsit} and \eqref{Inequa-exp-zlog-bb}, 
we conclude the proof of \eqref{Equi-contin-Inequ-Rsit}.
Since each $\varphi\in \mathcal{C(S)}$ 
can be approximated by a sequence of $\bar{s}$-H\"{o}lder continuous functions,
it follows that $\{ R_{s,it}^n \varphi \}_{n \geq 1}$ is equicontinuous on $\mathcal{S}$. 
\end{proof}

The following result gives the exponential decay of  $R_{s,it}$ for $t\in \mathbb{R} \setminus \{0\}$. 
Recall that $\rho(R_{s,it})$ denotes the spectral radius of $R_{s,it}$ acting on $\mathscr B_{\gamma}$. 

\begin{proposition} \label{Sca expdecay pertur}  
Assume either \ref{Condi-FK} when $s \in I_{\mu}^-$, or \ref{Condi-FK-weak}  when $s \in I_{\mu}^+$.
Assume also that \ref{Condi-NonLattice} holds. 
Let $J$ be a compact subset of $I_\mu^\circ \setminus \{0\}$.
Then,  for any $0 < \gamma \leq \min_{s \in J} \{ |s|, 1\}$ and any compact subset $T$ of $\mathbb{R}\backslash\{0\}$, 
we have 
$$\sup_{ s\in J }  \sup_{t \in T} \rho(R_{s,it}) < 1.$$
\end{proposition}

\begin{proof}
We provide only a sketch of the proof. Similarly to the approach used in the proof of Proposition \ref{spectra gap Q su 01}, 
one can apply Lemma \ref{Lem-R-sit-DF-ineq} 
along with the theorem of Ionescu-Tulcea and Marinescu to demonstrate that the operator $R_{s,it}$ is quasi-compact. 
Subsequently, by following the argument in  \cite[Proposition 3.7]{XGL19b}, 
one can show that $\rho(R_{s,it}) < 1$ for any fixed $s \in J$ and $t \in T$.  
Finally,  the desired result is obtained using the strategy outlined in \cite[Proposition 3.10]{XGL19b}.  
\end{proof}

\section{Spectral gap theory for the coefficients} \label{sec:spec gap entries}

\subsection{Spectral gap properties of the perturbed operator $R_{it, f}$}\label{Sec-pertur-Ritf}

For any $t \in \bb R$ and $f \in \mathcal S$, define the perturbed transfer operator $R_{it, f}$ as follows: 
for any $\mathcal{C(S)}$ and $v \in \mathcal S_{\epsilon}$, 
\begin{align}\label{transf-oper-it-f}
R_{it, f}\varphi(v)   
= \int_{\bb G} e^{it (\log \frac{\langle f, g v\rangle}{\langle f, v \rangle} - \lambda_{\mu} ) } \varphi( g \cdot v ) \mu(dg).
\end{align}
It is easy to see that for any $n \geq 1$, 
\begin{align}\label{transf-oper-it-f-n}
R_{it, f}^n\varphi(v)  
 = \int e^{it (\log \frac{\langle f, G_n v\rangle}{\langle f, v \rangle} - n \lambda_{\mu} ) } 
    \varphi( G_n \cdot v ) \mu(dg_1) \ldots \mu(dg_n).
\end{align}
The following result gives the spectral gap properties of the transfer operator $R_{it, f}$.

\begin{lemma}\label{transfer operator_Pit-f}
\noindent 1.   
Assume \ref{Condi-FK-weak}.      
Then, for any $0 < \gamma \leq 1$, 
there exists a constant $\delta>0$ such that for any $t \in (-\delta, \delta)$, $f \in \mathcal S$ and $n \geq 1$,  
\begin{align}\label{Pzn-decom_Pit-f}
R_{it, f}^n = \lambda_{it}^n \Pi_{it, f} + N_{it, f}^n,
\end{align}
where $\lambda_{it}$ is the same as in Lemma \ref{transfer operator_Pit},
$\Pi_{it, f}$ is a rank-one projection on $\scr B_{\gamma}$ satisfying
$\Pi_{0, f} \varphi(v) = \nu(\varphi)$
for any $\varphi \in \scr B_{\gamma}$, $f \in \mathcal S$ and $v \in \mathcal S_{\epsilon}$.  
Moreover, $\Pi_{it, f} N_{it, f} = N_{it, f} \Pi_{it, f} =0$ and $\rho(N_{it, f}) < 1$ 
for all $t \in (-\delta, \delta)$ and $f \in \mathcal S$. 

\noindent 2.   
Assume \ref{Condi-FK-weak} and \ref{Condi-NonLattice}. 
Then, for any $0 < \gamma \leq 1$ and any compact set $T \subseteq \mathbb{R}\backslash\{0\}$, 
there exists a constant $c = c(\gamma,T) > 0$ such that for any $t \in T$, $f \in \mathcal S$ and $n\geq 1$, 
\begin{align}\label{Non-lattice-Rnit}
\| R^{n}_{it, f} \varphi \|_{\gamma} \leq e^{-cn}  \| \varphi \|_{\gamma}.
\end{align} 
\end{lemma} 

\begin{proof}
By  \eqref{transf-oper-it-f-n} and the fact that $R_{it} = R_{0, it}$ with $R_{0,it}$ defined by \eqref{def R tu 01}, 
it holds that for any $t \in \bb R$, 
$f \in \mathcal S$, $v \in \mathcal S_{\epsilon}$, $n \geq 1$ and $\varphi \in \mathcal{C(S)}$, 
\begin{align*}
R_{it, f}^n\varphi(v) =  e^{-it \log \langle f, v\rangle } R_{it}^n \varphi_f(v), 
\quad \mbox{where} \quad 
\varphi_f(v) = e^{it \log \langle f, v \rangle} \varphi(v). 
\end{align*}
By \eqref{Pzn-decom_Pit}, we have 
\begin{align*}
R_{it}^n \varphi_f(v) =  \lambda_{it}^n \Pi_{it} \varphi_f(v) + N_{it}^n \varphi_f(v),
\end{align*}
so that for any $t \in \bb R$, $f \in \mathcal S$, $v \in \mathcal S_{\epsilon}$ and $\varphi \in \mathcal{C(S)}$, 
\begin{align*}
\Pi_{it, f} \varphi (v) =  e^{-it \log \langle f, v\rangle }  \Pi_{it}  \left[ e^{it \log \langle f, \cdot \rangle} \varphi(\cdot)  \right](v),
\quad  
N_{it, f} \varphi(v) =  e^{-it \log \langle f, v\rangle }  N_{it}  \left[ e^{it \log \langle f, \cdot \rangle} \varphi(\cdot)  \right](v). 
\end{align*}
Using Lemma \ref{transfer operator_Pit}, 
one can check all the properties of $\Pi_{it, f}$ and $N_{it, f}$ stated in Lemma \ref{transfer operator_Pit-f}
and we omit the details. This proves the first part of the lemma. 

The proof of \eqref{Non-lattice-Rnit} can be done in the same way as that of Proposition \ref{Sca expdecay pertur}. 
\end{proof}



\begin{lemma}\label{Lem-Bs-f-001}
Assume \ref{Condi-FK-weak} and $\int_{\bb G} \log N(g) \mu(dg) < \infty$. 
Let  $\epsilon \in (0,1)$ be from \eqref{equicon A4} of Lemma \ref{lem equiv Kesten}. 
Then, for any  $f \in \bb R_+^d \setminus \{0\}$, the function
\begin{align}\label{func-phi-001}
b(f, v) : = \lim_{n \to \infty}
\mathbb{E} \left( \log \frac{\langle f, G_n v \rangle}{\langle f, v \rangle} - n \lambda_{\mu}  \right),
\quad   v \in \mathcal{S}_{\epsilon}
\end{align}
is Lipschitz continuous on $\mathcal{S}_{\epsilon}$ and 
\begin{align}\label{def-bfv-aaa}
b(f, v) = \left.  \frac{ d \Pi_{it,f} }{ dt } \right|_{t=0} 1(v),  \quad  v \in \mathcal{S}_{\epsilon}. 
\end{align}
\end{lemma}

\begin{proof}
By Lemma \ref{transfer operator_Pit-f}, we have that for any Lipschitz continuous function $\varphi$ on $\mathcal{S}_{\epsilon}$, 
\begin{align*}
\mathbb{E}  
 \left[ \varphi(G_n \cdot v)  e^{ it \left( \log \frac{\langle f, G_n v \rangle}{\langle f, v \rangle} - n \lambda_{\mu} \right) }  \right]
= \lambda^{n}_{it} \Pi_{it, f} \varphi(v) + N^{n}_{it,f} \varphi(v),  \    v \in \mathcal{S}_{\epsilon}.
\end{align*}
Since $\lambda_{0} = 1$ and $\frac{d \lambda_{it} }{ dt } |_{t=0} =0$, 
differentiating both sides of the above equation with respect to $t$ 
at $0$ gives that for any $v \in \mathcal{S}_{\epsilon}$, 
\begin{align}\label{Bs01-f-001}
\mathbb{E}
  \left[ \varphi(G_n \cdot v)  \left( \log \frac{\langle f, G_n v \rangle}{\langle f, v \rangle} - n \lambda_{\mu} \right)  \right]  
  = \left. \frac{ d \Pi_{it, f} }{ dt } \right|_{t=0} \varphi(v) 
     + \left.  \frac{ d N^{n}_{it,f} }{ dt } \right|_{t=0} \varphi(v).
\end{align}
Again by Lemma \ref{transfer operator_Pit-f}, 
the first term on the right-hand side of \eqref{Bs01-f-001}
is Lipschitz continuous on $\mathcal{S}_{\epsilon}$, and the second term converges to $0$ exponentially fast as $n \to \infty$.
Hence, letting $n \to \infty$ in \eqref{Bs01-f-001}, 
we obtain \eqref{def-bfv-aaa}.
\end{proof}

\subsection{Spectral gap properties of the transfer operator $P_{s,f}$}\label{Sec:change-mes-prod}

The main goal of this section is to perform a change of measure for the coefficients $\langle f, G_n v \rangle$, 
utilizing the spectral gap properties of the transfer operator $P_{s,f}$ defined below. 
Let  $\epsilon \in (0,1)$ be as specified in \eqref{equicon A4-bb} of Lemma \ref{lem equiv Kesten}. 
For $s\in I_{\mu}$, $f \in  \mathcal{S}$ and $\varphi \in \mathcal{C(S_\epsilon)}$, define
\begin{align} \label{Def_P_sf}
P_{s,f} \varphi(v) 
=  \int_{\bb G} \frac{\langle f, g v \rangle^s}{\langle f, v \rangle^s} \varphi(g\cdot v) \mu(dg),
\quad  v \in \mathcal{S}_\epsilon. 
\end{align}
The operator $P_{s,f}$ relates to $P_s$ as follows: 
for any $\varphi \in \mathcal{C(S_\epsilon)}$,
\begin{align}\label{relation-Ps-Psf}
\langle f, \cdot \rangle^s P_{s,f} \varphi(\cdot) =  P_s (\langle f, \cdot \rangle^s \varphi). 
\end{align}
The following result describes the spectral gap properties of $P_{s,f}$.
Notably, the spectral radius of $P_{s,f}$ is independent of $f$, 
which plays an important role in defining the change of measure pertaining to the coefficients. 
In the result below the relevant quantities
such as $\kappa(s), r_s, \nu_s, L_s$
 are defined in Proposition \ref{change-measure-neg01} and  Corollary \ref{transfer operator}.

\begin{proposition} \label{change of measure coeffi 01}
Assume either \ref{Condi-FK} when $s \in I_{\mu}^-$, or \ref{Condi-FK-weak}  when $s \in I_{\mu}^+$.
 Let $s\in I_{\mu}^{\circ} \setminus \{0\}$. 
Then, for any $0 < \gamma \leq \min \{|s|, 1\}$, $f \in \mathcal{S}$ and $n \geq 1$, 
\begin{align}\label{Pzn-decom_Entry}
P_{s,f}^n = \kappa(s)^n M_{s,f} + L_{s,f}^n,
\end{align}
where, for $\varphi \in \scr B_{\gamma}$ and $v \in \mathcal{S}_\epsilon$, 
\begin{align}\label{Msf_Lsf}
M_{s,f} \varphi(v) = \nu_{s,f}(\varphi)  r_{s,f}(v), 
\qquad
L_{s,f}^n \varphi(v) =  \frac{1}{\langle f, v \rangle^s}  L_s^n \big(\langle f, \cdot \rangle^s \varphi \big)(v), 
\end{align}
with 
\begin{align}\label{eigenfunc_Psf}
r_{s, f}(v) = \frac{1}{\langle f, v \rangle^s} r_s(v),   
 \qquad 
 \nu_{s,f}(\varphi) = \frac{\nu_s(\langle f, \cdot \rangle^s \varphi)}{\nu_s(r_s)}. 
\end{align}
Moreover, $ \nu_{s,f}(r_{s,f}) = 1$, $M_{s,f} L_{s,f} = L_{s,f} M_{s,f} =0$ 
and $\rho(L_{s,f}) < \kappa(s)$
for any $f \in \mathcal{S}$. 
\end{proposition}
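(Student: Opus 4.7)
The plan is to reduce the proposition to Corollary \ref{transfer operator} by observing that $P_{s,f}$ is conjugate to $P_s$ via multiplication by $\langle f, \cdot \rangle^s$. Let $T_f$ denote the multiplication operator $T_f \varphi(v) = \langle f, v \rangle^s \varphi(v)$. Using the factorization $\langle f, g v \rangle = \|g v\| \, \langle f, g \!\cdot\! v \rangle$ inside the integral defining $P_{s,f}$, one checks directly that
\begin{align*}
\langle f, v \rangle^s P_{s,f} \varphi(v)
= \int_{\Gamma_\mu} \|g v\|^s \langle f, g \!\cdot\! v \rangle^s \varphi(g \!\cdot\! v) \mu(dg)
= P_s (T_f \varphi)(v),
\end{align*}
so that $P_{s,f} = T_f^{-1} P_s T_f$ on $\mathcal{S}_\epsilon$. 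This identity iterates to $P_{s,f}^n = T_f^{-1} P_s^n T_f$.

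Next I would verify that $T_f$ is a bounded invertible operator on $\scr B_\gamma$ (restricted to $\mathcal{S}_\epsilon$). By Lemma \ref{lem equiv Kesten}, condition \ref{Condi-FK} ensures $\epsilon \leq \langle f, v \rangle \leq 1$ for all $v \in \mathcal{S}_\epsilon$ and $f \in \mathcal{S}$, so $\langle f, \cdot \rangle^s$ is bounded above and below by positive constants depending only on $s$ and $\epsilon$. Since $x \mapsto x^s$ is smooth on $[\epsilon, 1]$ and $v \mapsto \langle f, v \rangle$ is Lipschitz continuous on $\mathcal{S}_\epsilon$ with respect to $\mathbf{d}$, both $\langle f, \cdot \rangle^s$ and $\langle f, \cdot \rangle^{-s}$ belong to $\scr B_\gamma$ for every $\gamma \in (0, 1]$, which implies $T_f, T_f^{-1} \in \mathscr{L}(\scr B_\gamma, \scr B_\gamma)$. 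Consequently $P_{s,f} \in \mathscr{L}(\scr B_\gamma, \scr B_\gamma)$.

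Substituting the decomposition $P_s^n = \kappa(s)^n M_s + L_s^n$ from Corollary \ref{transfer operator} into the conjugation identity yields
\begin{align*}
P_{s,f}^n \varphi(v)
= \kappa(s)^n \frac{\nu_s(\langle f, \cdot \rangle^s \varphi)}{\nu_s(r_s)} \frac{r_s(v)}{\langle f, v \rangle^s}
+ \frac{1}{\langle f, v \rangle^s} L_s^n \bigl( \langle f, \cdot \rangle^s \varphi \bigr)(v),
\end{align*}
which matches the claimed formulas with $M_{s,f} = T_f^{-1} M_s T_f$ and $L_{s,f} = T_f^{-1} L_s T_f$. The remaining algebraic identities follow mechanically: $\nu_{s,f}(r_{s,f}) = \nu_s(r_s)/\nu_s(r_s) = 1$; the relations $M_{s,f}^2 = M_{s,f}$ and $M_{s,f} L_{s,f} = L_{s,f} M_{s,f} = 0$ transfer from $M_s$ and $L_s$ through conjugation; and $\rho(L_{s,f}) = \rho(L_s) < \kappa(s)$ since similar operators have the same spectrum. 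The proof is essentially routine once the conjugation is recognized; the only point requiring care is the verification that $T_f$ preserves $\scr B_\gamma$, which crucially relies on the lower bound $\langle f, v \rangle \geq \epsilon$ provided by \ref{Condi-FK} via Lemma \ref{lem equiv Kesten}.
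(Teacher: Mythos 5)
Your proof is correct and follows essentially the same route as the paper: the identity $P_{s,f}^n\varphi(v)=\langle f,v\rangle^{-s}P_s^n\bigl(\langle f,\cdot\rangle^s\varphi\bigr)(v)$ (your conjugation by $T_f$) combined with the decomposition $P_s^n=\kappa(s)^nM_s+L_s^n$ from Corollary \ref{transfer operator} is exactly how the paper argues, and your transfer of the algebraic relations and of $\rho(L_{s,f})<\kappa(s)$ by similarity, justified by the bounds $\epsilon\leq\langle f,v\rangle\leq 1$ on $\mathcal{S}_\epsilon$ from Lemma \ref{lem equiv Kesten}, matches the paper's reasoning (the paper merely adds a direct check that $r_{s,f}$ and $\nu_{s,f}$ are eigenobjects, which your formulas yield as a byproduct).
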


\begin{proof}
We first show that $r_{s,f}$ defined by \eqref{eigenfunc_Psf} satisfies 
 $P_{s,f} r_{s,f} = \kappa(s) r_{s,f}$. 
By the definition of $P_{s,f}$ and $P_s$ (cf.\  \eqref{Def_P_sf} and \eqref{transfoper001}), 
and the fact that $P_s r_s = \kappa(s) r_s$, we have 
\begin{align*}
P_{s, f} r_{s, f}(v) 
& =  \int_{\bb G} \frac{\langle f, g v \rangle^s}{\langle f, v \rangle^s} 
    \frac{1}{\langle f, g \cdot v \rangle^s} r_s(g \cdot v) \mu(dg)  \\ 
& =  \int_{\bb G} \frac{1}{\langle f, v \rangle^s}  |gv|^s  r_s(g \cdot v) \mu(dg)   \\ 
& =  \kappa(s) \frac{1}{\langle f, v \rangle^s} r_s(v)  \\
& =  \kappa(s) r_{s, f}(v). 
\end{align*}
We next show that  $\nu_{s,f}$ defined by \eqref{eigenfunc_Psf} satisfies $\nu_{s,f} P_{s,f} = \kappa(s) \nu_{s,f}$.
Using $\nu_s P_s = \kappa(s) \nu_s$, we get 
\begin{align*}
\nu_{s,f}(P_{s, f} \varphi) 
& = \frac{\nu_s(\langle f, \cdot \rangle^s P_{s, f} \varphi)}{\nu_s(r_s)}  \\
& = \frac{1}{\nu_s(r_s)} 
    \int_{\mathcal{S}} \int_{\bb G} \langle f, g v \rangle^s  \varphi(g \cdot v) \mu(dg) \nu_s(dv)  \\
& = \frac{1}{\nu_s(r_s)} \nu_s \left( P_s (\langle f, \cdot \rangle^s \varphi) \right)  \\
& = \frac{1}{\nu_s(r_s)} \kappa(s) \nu_s \left( \langle f, \cdot \rangle^s \varphi \right)  \\
& = \kappa(s) \nu_{s,f}(\varphi).  
\end{align*}

Now we prove \eqref{Pzn-decom_Entry} and the related properties. 
For any $n \geq 1$, $s\in I_{\mu}^{\circ} \setminus \{0\}$,  
$f \in \mathcal{S}$, $v \in \mathcal{S}_\epsilon$ and $\varphi \in \mathcal{C(S_\epsilon)}$, it holds
\begin{align*}
P_{s,f}^n \varphi(v) 
=  \frac{1}{\langle f, v \rangle^s} P_s^n \varphi_f(v)  
\quad  \mbox{with} \quad
\varphi_f(v) = \langle f, v \rangle^s \varphi(v). 
\end{align*} 
By Lemma \ref{transfer operator}, we have
\begin{align*}
P_s^n \varphi_f(v) 
= \kappa(s)^n \frac{ \nu_s(\varphi_f) }{ \nu_s(r_s) }  r_s(v)  + L_s^n \varphi_f(v).
\end{align*}
Therefore, 
\begin{align*}
P_{s,f}^n \varphi(v) 
& = \kappa(s)^n \frac{ \nu_s(\langle f, \cdot \rangle^s \varphi) }{ \nu_s(r_s) } \frac{1}{\langle f, v \rangle^s}  r_s(v)  
  + \frac{1}{\langle f, v \rangle^s}  L_s^n \big(\langle f, \cdot \rangle^s \varphi \big)(v)  \\
& =  \kappa(s)^n \nu_{s,f}(\varphi) r_{s,f}(v) + L_{s,f}^n \varphi(v), 
\end{align*}
which proves \eqref{Pzn-decom_Entry} and \eqref{Msf_Lsf}. 
Using $M_{s} L_{s} = L_{s} M_{s} =0$, one can verify that $M_{s,f} L_{s,f} = L_{s,f} M_{s,f} =0$. 
Since $\rho(L_s) < \kappa(s)$,
we get $\rho(L_{s,f}) < \kappa(s)$ for all $f \in \mathcal{S}$. 
\end{proof}

Let $s\in I_{\mu}$. 
For any $f \in \mathcal{S}$, 
define the Markov operator $Q_{s,f}$ as follows: for any $\varphi \in \mathcal{C(S_\epsilon)}$,
\begin{align*}
Q_{s,f} \varphi(v): = \frac{1}{\kappa(s) r_{s,f}(v)} P_{s,f}(\varphi r_{s,f})(v), \quad x\in \mathcal{S}_{\epsilon}. 
\end{align*}
Using \eqref{relation-Ps-Psf}, and \eqref{eigenfunc_Psf} in Proposition \ref{change of measure coeffi 01}, we can check that
$Q_{s,f}$ coincides with $Q_s$ (cf.\ \eqref{defi Q_s}) on $\mathcal{C(S_\epsilon)}$: 
for any $\varphi \in \mathcal{C(S_\epsilon)}$,
\begin{align*}
Q_{s,f} \varphi = Q_s \varphi.
\end{align*} 
By iteration, we get
\begin{align*}
Q_{s,f}^n \varphi(v)
= \int \varphi(G_n \!\cdot\! v) q_{s,f,n}(G_n, v) \mu(dg_1) \ldots \mu(dg_n), \quad n\geq 1,
\end{align*}
where
\begin{align*}
q_{s,f,n}(G_n, v)
= \frac{\langle f,G_n v \rangle^s}{\kappa(s)^n \langle f, v \rangle^s}
\frac{r_{s,f} (G_n \!\cdot\! v )}{r_{s,f}(v)}
= q_{s,n}(G_n, v),
\end{align*}
with $q_{s,n}(G_n, v)$ defined by \eqref{def-qnsGn}. The last equality can be checked using \eqref{eigenfunc_Psf}. 
From \eqref{cocycle02}, it holds that for any $n,m \geq 1$, 
\begin{align} \label{cocycle02-f}
q_{s,f,n}(G_n, v)  q_{s,f,m} ( g_{n+m} \ldots g_{n+1}, G_n \!\cdot\! v )
= q_{s,f,n+m}(G_{n+m}, v).
\end{align} 
As in \eqref{change measure equ coeff}, 
let $\bb Q^{v}_{s,f}$ be the unique probability measure on $M(d, \bb R)^{\bb N}$ with marginals 
$q_{s,f,n}(G_n, v) \mu(dg_1) \dots \mu(dg_n)$, then
we have for any bounded measurable function $h$ on $(\mathcal{S}_\epsilon \times \mathbb R)^n$, 
\begin{align} \label{change measure equ coeff2}
& \mathbb{E} \left[ r_{s,f}(G_n \!\cdot\! v) \frac{\langle f, G_n v \rangle^{s}}{\langle f, v \rangle^{s}}
h \left( G_1 \!\cdot\! v, \log \frac{\langle f, G_1 v \rangle}{\langle f, v \rangle}, 
\ldots,  G_n \!\cdot\! v,  \log \frac{\langle f, G_n v \rangle}{\langle f, v \rangle}  \right) \right]  \nonumber \\
& = \kappa(s)^n r_{s,f}(v)
  \mathbb{E}_{\bb Q^{v}_{s,f}} \left[ h \left( G_1 \!\cdot\! v, \log \frac{\langle f, G_1 v \rangle}{\langle f, v \rangle}, 
\ldots,  G_n \!\cdot\! v,  \log \frac{\langle f, G_n v \rangle}{\langle f, v \rangle} \right) \right]. 
\end{align}

Now we show the spectral gap properties for the Markov operator $Q_{s,f}$. 
Define $\pi_{s,f}(\varphi) =  \nu_{s,f} (\varphi r_{s,f})$ for $\varphi \in \mathcal{C(S)}$, 
and note that $\pi_{s,f}(1) = \nu_{s,f} (r_{s,f}) = 1$.

\begin{proposition} \label{spectra gap Q su 01-f}
Assume either \ref{Condi-FK} when $s \in I_{\mu}^-$, or \ref{Condi-FK-weak}  when $s \in I_{\mu}^+$.
Let $s\in I_{\mu}^{\circ} \setminus \{0\}$. 
Then, for any $0 < \gamma \leq \min \{|s|, 1\}$,  $f \in \mathcal{S}$ and $n \geq 1$, we have 
\begin{align*}
Q_{s,f}^n = \Pi_{s,f} + N^n_{s,f}, 
\end{align*}
where, for $\varphi \in \scr B_{\gamma} \cap \mathcal{C(S_\epsilon)}$ and $v \in \mathcal{S}_{\epsilon}$,
\begin{align*}
\Pi_{s,f} \varphi(v) = \pi_{s,f}(\varphi),
\qquad  
N^n_{s,f} \varphi(v) =  \frac{1}{\kappa(s)^n r_{s,f}(v)}  L_{s,f}^n(\varphi r_{s,f})(v), 
\end{align*}
and $\Pi_{s,f} N_{s,f} = N_{s,f} \Pi_{s,f} = 0$.  
In addition,
there exist constants $c_s > 0$ and $a \in (0,1)$ such that for all $f \in \mathcal{S}$ and $n \geq 1$, 
\begin{align}\label{Bound_Nsf_expo}
\|N^n_{s,f}\|_{\scr B_{\gamma} \to \scr B_{\gamma}}\leq c_s a^n. 
\end{align} 
\end{proposition}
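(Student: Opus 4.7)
The plan is to transport the spectral decomposition of $P_{s,f}$ established in Proposition \ref{change of measure coeffi 01} to the Markov operator $Q_{s,f}$ via the conjugation
\[
Q_{s,f}\varphi(v) = \frac{1}{\kappa(s)\, r_{s,f}(v)}\, P_{s,f}(\varphi r_{s,f})(v),
\]
and then to extract the uniform-in-$f$ exponential decay of the remainder. First I would iterate the definition of $Q_{s,f}$ and observe that for any $n\geq 1$,
\[
Q_{s,f}^n\varphi(v) = \frac{1}{\kappa(s)^n r_{s,f}(v)}\, P_{s,f}^n(\varphi r_{s,f})(v).
\]
Substituting the decomposition $P_{s,f}^n = \kappa(s)^n M_{s,f} + L_{s,f}^n$ and using $M_{s,f}(\varphi r_{s,f})(v) = \nu_{s,f}(\varphi r_{s,f})\, r_{s,f}(v) = \pi_{s,f}(\varphi)\, r_{s,f}(v)$ yields the candidate decomposition $Q_{s,f}^n = \Pi_{s,f} + N_{s,f}^n$ with the stated formulas. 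The orthogonality relations $\Pi_{s,f} N_{s,f} = N_{s,f}\Pi_{s,f} = 0$ are then immediate consequences of $M_{s,f}L_{s,f} = L_{s,f}M_{s,f} = 0$, after conjugating by the multiplication operator $\varphi\mapsto r_{s,f}\varphi$.

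The main obstacle is to establish the uniform bound $\|N_{s,f}^n\|_{\mathscr B_\gamma\to\mathscr B_\gamma}\leq c_s a^n$ for some constants $c_s>0$, $a\in(0,1)$ \emph{independent of} $f\in\mathcal S$. My strategy is to exploit the explicit conjugation
\[
L_{s,f}^n\varphi(v) = \frac{1}{\langle f, v\rangle^s}\, L_s^n\bigl(\langle f, \cdot\rangle^s \varphi\bigr)(v),\qquad v\in\mathcal S_\epsilon,
\]
together with the spectral gap $\rho(L_s)<\kappa(s)$ from Corollary \ref{transfer operator}, which gives $\|L_s^n\|_{\mathscr B_\gamma}\leq c\,(\kappa(s) a)^n$ for some $a\in(0,1)$. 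The crucial technical claim is that the multiplication operators $M_f:\varphi\mapsto \langle f, \cdot\rangle^s\varphi$ and $M_f^{-1}$ (and likewise $r_{s,f}^{\pm 1}\cdot$) are bounded on $\mathscr B_\gamma|_{\mathcal S_\epsilon}$ \emph{uniformly in $f\in\mathcal S$}. The $L^\infty$ bound is clear because under \ref{Condi-FK}, Lemma \ref{lem equiv Kesten} gives $\langle f,v\rangle\in[\epsilon,1]$ for $f\in\mathcal S$ and $v\in\mathcal S_\epsilon$. For the H\"older seminorm, using the elementary inequality $|x^s - y^s|\leq c_s|x-y|^{\min\{|s|,1\}}$ valid on $[\epsilon,1]$, together with the Lipschitz continuity of $v\mapsto \langle f,v\rangle$ (with constant independent of $f$) and the comparison \eqref{DistanPosi} between $\|\cdot\|$ and $\bf d$, one obtains $[\langle f,\cdot\rangle^s]_\gamma\leq C_s$ uniformly in $f$, provided $\gamma\leq \min\{|s|,1\}$. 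This last restriction on $\gamma$ is precisely the source of the hypothesis of the proposition.

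Combining these estimates, $\|M_f\|_{\mathscr B_\gamma\to\mathscr B_\gamma}$, $\|M_f^{-1}\|_{\mathscr B_\gamma\to\mathscr B_\gamma}$, and the corresponding norms for multiplication by $r_{s,f}^{\pm 1} = \langle f,\cdot\rangle^{\mp s} r_s^{\pm 1}$ (recalling from Proposition \ref{change-measure-neg01} that $r_s$ is strictly positive, bounded, and $\bar s$-H\"older) are all bounded by a constant $C_s$ independent of $f$. Therefore
\[
\|N_{s,f}^n\varphi\|_\gamma \leq \frac{C_s}{\kappa(s)^n}\,\|L_{s,f}^n(\varphi r_{s,f})\|_\gamma \leq \frac{C_s^2}{\kappa(s)^n}\,\|L_s^n\|_{\mathscr B_\gamma}\,\|\varphi\|_\gamma \leq c_s\, a^n\,\|\varphi\|_\gamma,
\]
which yields the desired uniform exponential bound and completes the proof. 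Everything else in the statement is either an algebraic consequence of Proposition \ref{change of measure coeffi 01} or a direct verification using the formula $\pi_{s,f}(\varphi) = \nu_{s,f}(\varphi r_{s,f})$.
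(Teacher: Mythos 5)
Your proposal is correct and follows essentially the same route as the paper: conjugate $Q_{s,f}^n$ to $P_{s,f}^n$, substitute the decomposition $P_{s,f}^n=\kappa(s)^n M_{s,f}+L_{s,f}^n$ from Proposition \ref{change of measure coeffi 01}, and read off $\Pi_{s,f}$, $N_{s,f}^n$ and the orthogonality relations from $M_{s,f}L_{s,f}=L_{s,f}M_{s,f}=0$. The only difference is cosmetic: where the paper justifies \eqref{Bound_Nsf_expo} in one line by citing $\rho(L_{s,f})<\kappa(s)$ for all $f\in\mathcal S$, you make the uniformity in $f$ explicit by reducing to the $f$-independent operator $L_s$ through uniformly bounded multiplication operators (indeed, since $\langle f,\cdot\rangle^{s}\varphi r_{s,f}=\varphi r_s$, one even has $N_{s,f}^n=N_s^n$), which is a legitimate and slightly more detailed rendering of the same argument.
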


\begin{proof}
By \eqref{Pzn-decom_Entry} and \eqref{Msf_Lsf}, 
we have that for any $\varphi \in \mathcal{C(S_\epsilon)}$ and $v \in \mathcal{S}_{\epsilon}$, 
\begin{align*}
Q_{s,f}^n \varphi(v) 
& = \frac{1}{\kappa(s)^n r_{s,f}(v)} P_{s,f}^n(\varphi r_{s,f})(v)  \\
& = \frac{1}{\kappa(s)^n r_{s,f}(v)} \left[ \kappa(s)^n M_{s,f}(\varphi r_{s,f})(v) + L_{s,f}^n(\varphi r_{s,f})(v) \right]  \\
& = \nu_{s,f} (\varphi r_{s,f}) +  \frac{1}{\kappa(s)^n r_{s,f}(v)}  L_{s,f}^n(\varphi r_{s,f})(v)  \\
& = \Pi_{s,f} \varphi + N^n_{s,f} \varphi(v).  
\end{align*}
Now we show that $\Pi_{s,f} N_{s,f} = 0$. 
By the definition of $\Pi_{s,f}$ and $N_{s,f}$,  we have for any $\varphi \in \scr B_{\gamma}$,
\begin{align*}
\Pi_{s,f} N_{s,f} \varphi 
= \Pi_{s,f} \left[ \frac{1}{\kappa(s) r_{s,f}}  L_{s,f}(\varphi r_{s,f}) \right]
= \frac{1}{\kappa(s)}  \nu_{s,f} L_{s,f}(\varphi r_{s,f}) 
= 0,
\end{align*}
where in the last equality we used $\nu_{s,f} L_{s,f} = 0$.
Similarly, one can use $L_{s,f} M_{s,f} = 0$ to verify that $N_{s,f} \Pi_{s,f} = 0$.  
The bound \eqref{Bound_Nsf_expo} follows from the fact that
$\rho(L_{s,f}) < \kappa(s)$ for all $f \in \mathcal{S}$ (cf. Proposition \ref{change of measure coeffi 01}). 
\end{proof}

\subsection{Spectral gap properties of the perturbed transfer operator $R_{s,z,f}$}  
Let $s \in I_\mu^{\circ} \setminus \{0\}$ and $q = \Lambda'(s)$. 
Let $\epsilon \in (0,1)$ be from \eqref{equicon A4-bb} of Lemma \ref{lem equiv Kesten}.
For any $f \in \mathcal{S}$ and $z \in \mathbb{C}$ with $s + \Re z \in I_{\mu}$, 
we define the perturbed operator $R_{s,z,f}$ as follows: for $\varphi\in \mathcal{C(S_\epsilon)}$, 
\begin{align} \label{def R tu 01-f}
R_{s,z,f} \varphi(v) 
= \mathbb{E}_{\mathbb{Q}_{s,f}^v} 
  \left[ e^{ z \left( \log \frac{\langle f, g v \rangle}{\langle f, v \rangle} - q \right) } \varphi(g \cdot v) \right], 
\quad v \in \mathcal{S}_\epsilon.   
\end{align}
By \eqref{cocycle02-f} and the induction argument, we have that for any $n \geq 1$,
\begin{align*}
R_{s,z,f}^n \varphi(v) 
= \mathbb{E}_{\mathbb{Q}_{s,f}^v} 
  \left[ e^{ z \left( \log \frac{\langle f, G_n v \rangle}{\langle f, v \rangle} - nq \right) } \varphi(G_n \cdot v) \right], 
\quad v \in \mathcal{S}_\epsilon.  
\end{align*}

Next we demonstrate the spectral gap properties of $R_{s,z,f}$. 
It is important that the dominant eigenvalue $\lambda_{s,z}$ of $R_{s,z,f}$ does not depend on $f \in  \mathcal{S}$.
Recall that $B_{\delta}(0) = \{z \in \mathbb{C}: |z| < \delta\}$ for $\delta>0$.

\begin{proposition} \label{perturbation thm 02-f}
Assume either \ref{Condi-FK} when $s \in I_{\mu}^-$, or \ref{Condi-FK-weak}  when $s \in I_{\mu}^+$.
Let $s \in I_\mu^{\circ} \setminus \{0\}$. 
Then, for any $0 < \gamma \leq \min \{|s|, 1\}$ and $f \in \mathcal{S}$,
there exists a constant $\delta > 0$ such that for any $z \in B_{\delta}(0)$, 
\begin{align}  \label{Pertur iden 01-f}
R^{n}_{s,z,f} = \lambda^{n}_{s,z} \Pi_{s,z,f} + N^{n}_{s,z,f}, 
\end{align}
where $\lambda_{s,z} = e^{-qz} \frac{\kappa(s+z)}{\kappa(s)}$, 
$z \mapsto \Pi_{s,z,f}$ and $z \mapsto N_{s,z,f}$: $B_{\delta}(0) \to \mathscr{L(B_{\gamma},B_{\gamma})}$
are analytic with $\Pi_{s,z,f} N_{s,z,f} = N_{s,z,f} \Pi_{s,z,f} = 0$ 
and $\Pi_{s,0,f}(\varphi)(v) = \pi_{s,f}(\varphi)$ for $\varphi \in \scr B_{\gamma}$ and $v \in \mathcal{S}_{\epsilon}$. 
Moreover, for any compact set $J \subseteq I_\mu^{\circ} \setminus \{0\}$, 
we have $\sup_{f \in \mathcal{S}} \sup_{s \in J} \sup_{z \in B_{\delta}(0)} \rho(N_{s,z,f})  <1$. 
\end{proposition}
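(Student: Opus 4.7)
The plan is to derive an explicit similarity between $R_{s,z,f}$ and the already-understood operator $R_{s,z}$ of Proposition~\ref{perturbation thm 02}, so that the required decomposition and uniform estimates follow by conjugation. The first observation is that the density $q_{s,f,1}(g,v)$ of $\mathbb{Q}_{s,f}^v$ does not depend on $f$: substituting $r_{s,f}(v) = r_s(v)/\langle f, v\rangle^s$ from \eqref{eigenfunc_Psf} and using $\langle f, g\cdot v\rangle = \langle f, gv\rangle/\|gv\|$, all $f$-dependent factors cancel to give
\begin{align*}
q_{s,f,1}(g,v) = \frac{\|gv\|^s\, r_s(g\cdot v)}{\kappa(s)\, r_s(v)} = q_1^s(v, g),
\end{align*}
so $\mathbb{Q}_{s,f}^v = \mathbb{Q}_s^v$ for every $f \in \mathcal S$. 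Splitting $\log\frac{\langle f, gv\rangle}{\langle f, v\rangle} = \log\|gv\| + \log\frac{\langle f, g\cdot v\rangle}{\langle f, v\rangle}$ in \eqref{def R tu 01-f} and introducing the multiplication operator $S_{f,z}\varphi(v) := \langle f, v\rangle^z \varphi(v)$, a short computation then gives
\begin{align*}
R_{s,z,f}\varphi(v) = \frac{1}{\langle f, v\rangle^z}\, R_{s,z}\bigl(\langle f, \cdot\rangle^z \varphi\bigr)(v),
\end{align*}
which is the operator identity $R_{s,z,f} = S_{f,z}^{-1} R_{s,z}\, S_{f,z}$ on $\scr B_\gamma$ restricted to $\mathcal S_\epsilon$.

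Next I would transport the spectral decomposition through this similarity. Proposition~\ref{perturbation thm 02} provides, on some ball $B_\delta(0)$, a decomposition $R_{s,z}^n = \lambda_{s,z}^n \Pi_{s,z} + N_{s,z}^n$ with $\lambda_{s,z} = e^{-qz}\kappa(s+z)/\kappa(s)$, $z \mapsto \Pi_{s,z}, N_{s,z}$ analytic, and $\sup_{s \in J}\sup_{z \in B_\delta(0)} \rho(N_{s,z}) < 1$. Conjugating yields
\begin{align*}
R_{s,z,f}^n = \lambda_{s,z}^n\,\bigl(S_{f,z}^{-1} \Pi_{s,z}\, S_{f,z}\bigr) + S_{f,z}^{-1} N_{s,z}^n\, S_{f,z},
\end{align*}
so setting $\Pi_{s,z,f} := S_{f,z}^{-1} \Pi_{s,z}\, S_{f,z}$ and $N_{s,z,f} := S_{f,z}^{-1} N_{s,z}\, S_{f,z}$ gives the claimed decomposition with the desired eigenvalue. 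The identities $\Pi_{s,z,f}^2 = \Pi_{s,z,f}$ and $\Pi_{s,z,f} N_{s,z,f} = N_{s,z,f} \Pi_{s,z,f} = 0$ transfer verbatim from those of $(\Pi_{s,z}, N_{s,z})$; similarity preserves the spectral radius, so $\rho(N_{s,z,f}) = \rho(N_{s,z})$ and the required uniform bound is immediate. At $z = 0$, $S_{f,0}$ is the identity, whence $\Pi_{s,0,f}\varphi(v) = \Pi_{s,0}\varphi(v) = \nu_s(r_s \varphi)/\nu_s(r_s)$, which coincides with $\pi_{s,f}(\varphi) = \nu_{s,f}(\varphi r_{s,f})$ via the identity $\nu_{s,f}(\varphi r_{s,f}) = \nu_s(r_s \varphi)/\nu_s(r_s)$ implied by \eqref{eigenfunc_Psf}.

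For analyticity in $z$ and uniformity in $f \in \mathcal S$, the essential input is Lemma~\ref{lem equiv Kesten}: under \ref{Condi-FK}, $\epsilon \leq \langle f, v\rangle \leq 1$ for all $f \in \mathcal S$ and $v \in \mathcal S_\epsilon$, with $\epsilon$ independent of $f$. Combined with the Lipschitz regularity of $v \mapsto \log\langle f, v\rangle$ on $\mathcal S_\epsilon$ (uniform in $f$), this ensures that $z \mapsto S_{f,z}$ is analytic as a map into $\mathscr L(\scr B_\gamma, \scr B_\gamma)$ with $\|S_{f,z}^{\pm 1}\|_{\scr B_\gamma\to\scr B_\gamma}$ bounded by a constant independent of $f \in \mathcal S$ for $z \in B_\delta(0)$; composition with the analytic families $z \mapsto \Pi_{s,z}, N_{s,z}$ then yields joint analyticity of $\Pi_{s,z,f}$ and $N_{s,z,f}$. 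The main step I see as the crux of the argument is noticing the cancellation identity $\mathbb{Q}_{s,f}^v = \mathbb{Q}_s^v$: once this is in hand, the remaining argument is routine conjugation bookkeeping, but without it one would be forced to re-run the Doeblin-Fortet and Ionescu-Tulcea-Marinescu machinery directly on $R_{s,z,f}$ and verify uniformity in $f \in \mathcal S$ by hand, which is considerably more delicate.
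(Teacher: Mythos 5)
Your proposal is correct, but it takes a genuinely different route from the paper. The paper's own proof is a one-liner: it re-runs the Hennion--Herv\'e perturbation theorem directly on $R_{s,z,f}$, using the spectral gap of $Q_{s,f}=R_{s,0,f}$ from Proposition \ref{spectra gap Q su 01-f}, ``in the same way as'' Proposition \ref{perturbation thm 02}; the uniformity in $f\in\mathcal S$ is then implicit and would have to be tracked through that argument by hand. You instead observe the cancellation $q_{s,f,n}(G_n,v)=q_n^s(v,G_n)$ (which indeed holds: substituting $r_{s,f}=\langle f,\cdot\rangle^{-s}r_s$ and $\langle f,G_n\!\cdot\! v\rangle=\langle f,G_nv\rangle/\|G_nv\|$ makes every $f$-dependent factor cancel), hence $\mathbb{Q}_{s,f}^v=\mathbb{Q}_s^v$, and then conjugate: $R_{s,z,f}=S_{f,z}^{-1}R_{s,z}S_{f,z}$ with $S_{f,z}$ multiplication by $\langle f,\cdot\rangle^z$. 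This transfers the whole decomposition of Proposition \ref{perturbation thm 02} at once and buys, essentially for free, exactly the three points the statement emphasizes: the eigenvalue $\lambda_{s,z}$ is $f$-independent, $\rho(N_{s,z,f})=\rho(N_{s,z})$ so the bound is automatically uniform in $f$, and analyticity follows by composing analytic families (your bounds $\epsilon\leq\langle f,v\rangle\leq 1$ and the uniform Lipschitz control of $\log\langle f,\cdot\rangle$ on $\mathcal S_\epsilon$ give $\sup_{f\in\mathcal S}\sup_{z\in B_\delta(0)}\|S_{f,z}^{\pm 1}\|_{\scr B_\gamma\to\scr B_\gamma}<\infty$). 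It is in fact the same mechanism the paper uses at $s=0$ in Lemma \ref{transfer operator_Pit-f}, pushed to general $s$ and complex $z$. One bookkeeping point you should make explicit: $\langle f,\cdot\rangle^z$ is only defined where $\langle f,v\rangle>0$, so the similarity is an identity between operators on H\"older functions on $\mathcal S_\epsilon$, and you need the decomposition of Proposition \ref{perturbation thm 02} for $R_{s,z}$ acting on that space; this is harmless because under \ref{Condi-FK} one has $g\cdot\mathcal S\subseteq\mathcal S_\epsilon$ and $\supp\nu_s\subseteq\mathcal S_\epsilon$, so the proof of Proposition \ref{perturbation thm 02} (or a standard restriction/extension argument) applies verbatim on $\mathcal S_\epsilon$ --- a caveat the paper itself glosses over, since its statement already mixes $\scr B_\gamma$ with evaluation at $v\in\mathcal S_\epsilon$.
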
  

\begin{proof}
By the definition of $R_{s,z,f}$, it is straightforward to verify that for any $s \in I_{\mu}^{\circ}$ and $f \in \mathcal{S}$,
the mapping $z \mapsto R_{s,z,f}$ is analytic in a small neighborhood of $0$. 
According to  Proposition \ref{spectra gap Q su 01}, the operator $R_{s,0,f} = Q_{s,f}$ has spectral gap on $\scr B_{\gamma}$. 
Consequently, by applying the perturbation theorem (see \cite[Theorem III.8]{HH01}), 
we conclude that the operator $R_{s,z,f}$ also has spectral gap on $\scr B_{\gamma}$. 
In particular, $R_{s,z,f}$ has a dominant eigenvalue $\lambda_{s,f,z}$ with the corresponding eigenvector $\tilde{r}_{s,z,f}$:
\begin{align}\label{SP_R_szf_01}
R_{s,z,f} \tilde{r}_{s,z,f} = \lambda_{s,f,z} \tilde{r}_{s,z,f}. 
\end{align}
So it remains to prove that $\lambda_{s,f,z}$ coincides with $\lambda_{s,z}$, as defined in \eqref{Pertur iden 01}. 
By the definition of $R_{s,f,z}$ and $P_{z,f}$, along with the change of measure formula \eqref{change measure equ coeff},
we have for any $\varphi \in \scr B_{\gamma}$ and $n \geq 1$,
\begin{align}\label{lambda com 01-f}
R_{s,f,z} (\varphi)(v)
=  e^{ - qz} \frac{ P_{s+z,f} (\varphi r_{s,f})(v) }{ \kappa(s) r_{s,f}(v)  },  
  \quad v \in \mathcal{S}_{\epsilon}. 
\end{align}
Taking $\varphi = \tilde{r}_{s, f, u}$ and using \eqref{SP_R_szf_01}, we get
\begin{align} \label{lambda equ 001}
\lambda_{s,f, u} \kappa(s)  (r_{s,f} \tilde{r}_{s,z, f})(v)
= P_{s+z,f} (r_{s,f} \tilde{r}_{s, z, f})(v). 
\end{align}
Integrating the equation \eqref{lambda equ 001} with respect to $\nu_{s+z,f}$
and using the fact that $P_{s+z,f} \nu_{s+z,f} = \kappa(s+z) \nu_{s+z,f}$, we obtain that
$\lambda_{s,z,f} = e^{-qz} \frac{\kappa(s+z)}{\kappa(s)} = \lambda_{s,z}.$
\end{proof}

The following result gives the exponential decay of $R_{s,it,f}$ for $t\in \mathbb{R} \setminus \{0\}$. 
Recall that $\rho(R_{s,it,f})$ denotes the spectral radius of $R_{s,it,f}$ acting on $\mathscr B_{\gamma}$.

\begin{proposition} \label{Sca expdecay pertur-f}  
Assume either \ref{Condi-FK} when $s \in I_{\mu}^-$, or \ref{Condi-FK-weak}  when $s \in I_{\mu}^+$.
Assume also that \ref{Condi-NonLattice} holds. Let $J \subseteq I_\mu^{\circ} \setminus \{0\}$ be a compact set.
Then,  for any $0 < \gamma \leq \min_{s \in J} \{ |s|, 1\}$ and any compact set $T \subseteq \mathbb{R}\backslash\{0\}$, 
it holds that 
$$
\sup_{f \in \mathcal{S}} \sup_{ s\in J}  \sup_{t \in T} \rho(R_{s,it,f}) < 1.
$$
\end{proposition}

\begin{proof}
The proof can be carried out in the same manner as that of Proposition \ref{Sca expdecay pertur}. 
\end{proof}

For any $s \in I_\mu^{\circ} \setminus \{0\}$, $f \in \mathcal{S}$ and $\varphi \in \scr B_{\gamma}$, we define   
\begin{align}\label{Def-bsvarphi-fv}
b_{s, \varphi}(f, v): = \lim_{n \to \infty}  \mathbb{E}_{\mathbb{Q}_{s,f}^v} 
   \left[ \varphi(G_n \cdot v)  \left( \log \frac{\langle f, G_n v \rangle}{\langle f, v \rangle} - n \Lambda'(s) \right)  \right],
\   v \in \mathcal{S}_{\epsilon}. 
\end{align}
The following result shows that $v \mapsto b_{s, \varphi}(f, v)$ 
is a H\"{o}lder continuous function on $\mathcal{S}_{\epsilon}$. 

\begin{lemma}\label{Lem-Bs}
Assume \ref{Condi-FK}. 
Let $s \in I_\mu^{\circ} \setminus \{0\}$.  
Then, for any $0 < \gamma \leq \min \{|s|, 1\}$, $f \in \mathcal{S}$ and $\varphi \in \scr B_{\gamma}$,  
we have $b_{s,\varphi}(f, \cdot) \in \scr B_{\gamma}$ and
\begin{align} \label{Def2-bs}
b_{s,\varphi}(f, v) = \left.  \frac{ d \Pi_{s,f,z} }{ dz } \right|_{z=0} \varphi(v),  \quad  v \in \mathcal{S}_{\epsilon}. 
\end{align}
\end{lemma}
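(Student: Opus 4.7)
The plan is to mimic the proof of Lemma \ref{Lem-B0}, replacing the norm cocycle $\log\|G_n v\|$ by the coefficient cocycle $\log\frac{\langle f,G_n v\rangle}{\langle f,v\rangle}$ and invoking the perturbation decomposition for $R_{s,z,f}$ from Proposition \ref{perturbation thm 02-f} in place of that for $R_{it}$.

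First, I apply the iterated form of \eqref{Pertur iden 01-f} to $\varphi \in \scr B_{\gamma}$ at the point $v \in \mathcal{S}_\epsilon$, which yields, for every $z \in B_\delta(0)$ and $n \geq 1$,
\begin{align*}
\mathbb{E}_{\mathbb{Q}_{s,f}^{v}} \left[ e^{ z \left( \log \frac{\langle f, G_n v \rangle}{\langle f, v \rangle} - n q \right) } \varphi(G_n \!\cdot\! v) \right]
= \lambda_{s,z}^{n} \Pi_{s,z,f} \varphi(v) + N_{s,z,f}^{n} \varphi(v),
\end{align*}
with $q = \Lambda'(s)$. Since $\lambda_{s,z} = e^{-qz} \kappa(s+z)/\kappa(s)$, the relation $q = \kappa'(s)/\kappa(s)$ gives $\lambda_{s,0}=1$ and $\lambda'_{s,0} = 0$. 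Thus the factor $\lambda_{s,z}^n$ contributes nothing to the first $z$-derivative at $0$.

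Next I differentiate both sides at $z=0$. By Proposition \ref{perturbation thm 02-f}, the maps $z \mapsto \Pi_{s,z,f}$ and $z \mapsto N_{s,z,f}$ are analytic from $B_\delta(0)$ into $\mathscr{L}(\scr B_\gamma,\scr B_\gamma)$, which justifies termwise differentiation; on the left, the presence of an exponential moment on an appropriate neighbourhood (ensured by the change-of-measure formula \eqref{change measure equ coeff2} and the definition of the analytic perturbation) allows us to differentiate under $\mathbb{E}_{\mathbb{Q}_{s,f}^v}$. We obtain
\begin{align*}
\mathbb{E}_{\mathbb{Q}_{s,f}^{v}} \left[ \left( \log \frac{\langle f, G_n v \rangle}{\langle f, v \rangle} - n q \right) \varphi(G_n \!\cdot\! v) \right]
= \left. \frac{d\Pi_{s,z,f}}{dz} \right|_{z=0} \varphi(v) + \left. \frac{dN_{s,z,f}^{n}}{dz} \right|_{z=0} \varphi(v).
\end{align*}
The first term on the right-hand side is a bounded linear operator on $\scr B_\gamma$ applied to $\varphi$, hence a function in $\scr B_\gamma$ (and independent of $n$). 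For the second term, since $\sup_{z \in B_{\delta/2}(0)} \rho(N_{s,z,f}) < 1$ by the last clause of Proposition \ref{perturbation thm 02-f}, a Cauchy-type estimate (the derivative is obtained as a contour integral of $N_{s,z,f}^n/z^2$ over $|z|=\delta/2$) shows that $\bigl\| \frac{d}{dz}N_{s,z,f}^{n}\bigr|_{z=0}\bigr\|_{\scr B_\gamma\to\scr B_\gamma}$ decays exponentially in $n$.

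Letting $n \to \infty$ shows that the limit defining $b_{s,\varphi}(f,v)$ in \eqref{Def-bsvarphi-fv} exists and equals $\frac{d\Pi_{s,z,f}}{dz}\bigr|_{z=0}\varphi(v)$, which lies in $\scr B_\gamma$, proving both claims simultaneously. The only nontrivial point is the differentiation under the expectation sign and the analyticity of the perturbed operators in a uniform norm on $\scr B_\gamma$; both are already encoded in Proposition \ref{perturbation thm 02-f}, so the proof is essentially a direct transcription of the argument in Lemma \ref{Lem-B0}.
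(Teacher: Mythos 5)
Your proposal is correct and follows essentially the same route as the paper: apply the decomposition of Proposition \ref{perturbation thm 02-f}, differentiate at $z=0$ using $\lambda_{s,0}=1$ and $\frac{d\lambda_{s,z}}{dz}\big|_{z=0}=0$, note that the $\Pi$-term lies in $\scr B_{\gamma}$ while the $N$-term decays exponentially in $n$, and let $n\to\infty$. The extra justifications you give (the Cauchy-integral bound for the derivative of $N_{s,z,f}^n$ and differentiation under the expectation) are consistent with what the paper leaves implicit.
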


\begin{proof}
By Proposition \ref{perturbation thm 02-f}, we have that for any $\varphi \in \scr B_{\gamma}$, 
\begin{align*}
\mathbb{E}_{\mathbb{Q}_{s,f}^{v}} 
 \left[ \varphi(G_n \cdot v)  e^{ z \left( \log \frac{\langle f, G_n v \rangle}{\langle f, v \rangle} - n\Lambda'(s) \right) }  \right]
= \lambda^{n}_{s, z} \Pi_{s,f,z} \varphi(v) + N^{n}_{s,f,z} \varphi(v),  \    v \in \mathcal{S}_{\epsilon}.
\end{align*}
Since $\lambda_{s,0} = 1$ and $\frac{d \lambda_{s,z} }{ dz } |_{z=0} =0$, 
differentiating both sides of the above equation with respect to $z$ 
at $0$ yields that for any $v \in \mathcal{S}_{\epsilon}$, 
\begin{align}\label{Bs01}
\mathbb{E}_{\mathbb{Q}_{s,f}^{v}} 
  \left[ \varphi(G_n \cdot v)  \left( \log \frac{\langle f, G_n v \rangle}{\langle f, v \rangle} - n\Lambda'(s) \right)  \right]  
  = \left. \frac{ d \Pi_{s,f,z} }{ dz } \right|_{z=0} \varphi(v) 
     + \left.  \frac{ d N^{n}_{s,f,z} }{ dz } \right|_{z=0} \varphi(v).
\end{align}
Again by Proposition \ref{perturbation thm 02-f},
the first term on the right-hand side of \eqref{Bs01}
belongs to $\scr B_{\gamma}$, while the second term converges to $0$ exponentially fast as $n \to \infty$.
Therefore, letting $n \to \infty$ in \eqref{Bs01}, 
we obtain \eqref{Def2-bs}.
\end{proof}

\section{Proof of Edgeworth expansion and Berry-Esseen theorem} 

The goal of this section is to establish Theorems \ref{MainThm_Edgeworth_01} and \ref{Thm-BerryEsseen}. 
The proof of these theorems relies on the spectral gap properties of the perturbed operators $R_{it}$
and $R_{it, f}$ under polynomial moment assumptions, 
as developed in Sections \ref{Sec-spec-gap-Rit} and \ref{Sec-pertur-Ritf}.

\subsection{Proof of Theorem \ref{MainThm_Edgeworth_01}}

To establish Theorem \ref{MainThm_Edgeworth_01}, we begin by proving the following 
first-order Edgeworth expansion for the cocycle $\log  \frac{ \langle f, G_n v \rangle}{\langle f, v \rangle}$.

\begin{proposition}\label{Prop_Edgewoth_aa}
Assume \ref{Condi-FK-weak}, \ref{Condi-NonLattice} and $\int_{\bb G} \log^3 N(g) \mu(dg) < \infty$. 
Then, as $n \to \infty$, uniformly in $f \in \mathcal{S}$, $v \in \mathcal{S}_{\epsilon}$ and $y \in \mathbb{R}$,
\begin{align*}
& \bb P \left( \frac{ \log  \frac{ \langle f, G_n v \rangle}{\langle f, v \rangle} - n \lambda_{\mu} }{\sigma \sqrt{n}} \leq y \right)
  =  \Phi(y) + \frac{ m_3 }{ 6 \sigma^3 \sqrt{n}} (1-y^2) \phi(y) 
     - \frac{ b(f, v) }{ \sigma \sqrt{n} } \phi(y)   
     +  o \left( \frac{ 1 }{\sqrt{n}} \right). 
\end{align*}
\end{proposition}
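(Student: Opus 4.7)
The natural route is the Nagaev--Guivarc'h method applied to the perturbed operator $R_{it,f}$ introduced in \eqref{transf-oper-it-f}. Writing $S_n:=\log\tfrac{\langle f,G_nv\rangle}{\langle f,v\rangle}-n\lambda$ for the centered cocycle, identity \eqref{transf-oper-it-f-n} gives
\begin{equation*}
\hat F_n(\tau)\,:=\,\mathbb{E}\!\left[e^{i\tau S_n/(\sigma\sqrt n)}\right]\,=\,R_{i\tau/(\sigma\sqrt n),\,f}^{\,n}\mathbf 1(v).
\end{equation*}
For $|\tau|\le A_n$ with $A_n\to\infty$ chosen slowly enough (say $A_n=n^{1/8}$), the parameter $t:=\tau/(\sigma\sqrt n)$ lies in a small neighborhood of $0$, so the spectral decomposition \eqref{Pzn-decom_Pit-f} from Lemma \ref{transfer operator_Pit-f}(1) yields $\hat F_n(\tau)=\lambda_{it}^n\,\Pi_{it,f}\mathbf 1(v)+N_{it,f}^n\mathbf 1(v)$. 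Combining the cubic expansion $\lambda_{it}=1-\tfrac{\sigma^2}{2}t^2-\tfrac{im_3}{6}t^3+o(t^3)$ from Lemma \ref{transfer operator_Pit}(2) (valid under \ref{Condi-FK-weak} and the third-moment hypothesis), the first-order expansion $\Pi_{it,f}\mathbf 1(v)=1+it\,b(f,v)+O(t^2)$ coming from the analyticity of $t\mapsto\Pi_{it,f}$ together with the identification \eqref{def-bfv-aaa}, and the uniform geometric bound $\|N_{it,f}^n\mathbf 1\|_\gamma=O(\rho^n)$, one obtains
\begin{equation*}
\hat F_n(\tau)\,=\,e^{-\tau^2/2}\Bigl[1+\tfrac{i\tau\,b(f,v)}{\sigma\sqrt n}-\tfrac{im_3\tau^3}{6\sigma^3\sqrt n}\Bigr]+e^{-\tau^2/4}\,o\!\bigl(\tfrac{1}{\sqrt n}\bigr),
\end{equation*}
uniformly in $f\in\mathcal{S}$ and $v\in\mathcal{S}_\epsilon$. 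A short Hermite-polynomial computation (using $(1-y^2)\phi(y)=-\phi''(y)$ and $\int e^{i\tau y}H_k(y)\phi(y)\,dy=(i\tau)^k e^{-\tau^2/2}$) identifies the bracketed factor as precisely the Fourier--Stieltjes transform $\hat G_n(\tau)$ of the Edgeworth approximation $G_n$ appearing in the statement.

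Next I would invoke Esseen's smoothing inequality
\begin{equation*}
\sup_y|F_n(y)-G_n(y)|\,\le\,C\int_{-T}^{T}\frac{|\hat F_n(\tau)-\hat G_n(\tau)|}{|\tau|}\,d\tau\,+\,\frac{C}{T}
\end{equation*}
and split the integration into three regimes: (a) $|\tau|\le A_n$, where the Taylor expansion above yields an $o(1/\sqrt n)$ contribution after integrating the Gaussian factor $e^{-\tau^2/4}/|\tau|$; (b) $A_n\le|\tau|\le\delta\sqrt n$, where the quadratic term of Lemma \ref{transfer operator_Pit}(2) gives $|\lambda_{it}|^n\le e^{-c\tau^2}$ and hence an exponentially negligible contribution from both $\hat F_n$ and $\hat G_n$; (c) $\delta\sqrt n\le|\tau|\le T$, where $t=\tau/(\sigma\sqrt n)$ stays in a fixed compact subset of $\mathbb R\setminus\{0\}$ so that the non-lattice estimate \eqref{Non-lattice-Rnit} of Lemma \ref{transfer operator_Pit-f}(2) forces $\|R_{it,f}^n\mathbf 1\|_\gamma\le e^{-cn}$ uniformly in $f$.

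The main obstacle is the interplay between the boundary term $C/T$ in Esseen's inequality and the control available in regime (c): in order to push the overall error strictly below $1/\sqrt n$ one must take $T$ of order larger than $\sqrt n$, but then $t=\tau/(\sigma\sqrt n)$ escapes every fixed compact subset of $\mathbb R\setminus\{0\}$ and \eqref{Non-lattice-Rnit} no longer applies directly. Under the optimal third-moment hypothesis the classical remedy, in the spirit of Hennion--Herv\'e, is to replace $F_n-G_n$ by a suitable convolution smoothing against a signed measure whose Fourier transform has compact support (the Prawitz--Zorin--Feller device); this reduces the problem to a truncation at $T$ of order $\sqrt n$ at the cost of an additional $o(1/\sqrt n)$ error which vanishes as the smoothing support shrinks to $\{0\}$. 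Uniformity in $(f,v)\in\mathcal S\times\mathcal S_\epsilon$ is then automatic from the corresponding uniformity in Lemmas \ref{transfer operator_Pit}, \ref{transfer operator_Pit-f} and \ref{Lem-Bs}, once one records that $b(f,v)$ is continuous in $(f,v)$.
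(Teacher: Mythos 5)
Your Fourier-analytic scheme is the same as the paper's: Esseen smoothing applied to the characteristic function $R_{it,f}^n\mathbf 1(v)$, the spectral decomposition \eqref{Pzn-decom_Pit-f}, the cubic expansion of $\lambda_{it}$ from Lemma \ref{transfer operator_Pit}, the first-order expansion $\Pi_{it,f}\mathbf 1(v)=1+it\,b(f,v)+O(t^2)$ via \eqref{def-bfv-aaa}, the geometric bound on $N_{it,f}^n$, and the non-arithmetic bound \eqref{Non-lattice-Rnit} on the mid-range frequencies. Up to the choice of cutoffs, your regimes (a) and (b) are handled exactly as in the paper, and your identification of the bracketed factor with the transform of the Edgeworth term is correct.

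The place where your argument goes astray is the ``main obstacle'' you raise at the end, which is a misdiagnosis, and the Prawitz-type convolution smoothing you invoke to cure it is both unnecessary and left unproved. The boundary term in the smoothing inequality is $24m/(\pi T)$, where $m=\sup_{n}\sup_{y}\sup_{f\in\mathcal S}\sup_{v\in\mathcal S_\epsilon}|F_n'(y)|<\infty$ is a fixed constant (here $F_n$ is the Edgeworth approximation, whose derivative is uniformly bounded because $b(f,v)$ is uniformly bounded). To get an $o(1/\sqrt n)$ conclusion you do \emph{not} need $T/\sqrt n\to\infty$: for each $\varepsilon>0$ choose $\delta_1=\delta_1(\varepsilon)$ with $24m/(\pi\delta_1)<\varepsilon$ and take $T=\delta_1\sqrt n$, so the boundary term is at most $\varepsilon/\sqrt n$. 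The remaining integral is split at $\delta_2\sqrt n$; on $\delta_2\sqrt n\le|\tau|\le\delta_1\sqrt n$ the rescaled frequency $t=\tau/(\sigma\sqrt n)$ stays in the fixed compact set $\{\delta_2/\sigma\le|t|\le\delta_1/\sigma\}\subset\mathbb R\setminus\{0\}$, so \eqref{Non-lattice-Rnit} applies (with constants depending only on $\delta_1,\delta_2$) and contributes $Ce^{-cn}$, while the low-frequency part contributes $o(1/\sqrt n)$ uniformly in $f,v$. Letting $n\to\infty$ and then $\varepsilon\to0$ gives the claim; your regime (c) never arises. This $\varepsilon$-choice of $T=\delta_1\sqrt n$ is precisely how the paper closes the argument, so you should replace the smoothing device (whose ``additional $o(1/\sqrt n)$ error'' you do not actually establish) by this standard step; with that correction the rest of your proof is sound.
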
 

\begin{proof}
Throughout the proof we assume that $\gamma \in (0,1)$, ensuring that the conclusions in Lemma \ref{transfer operator_Pit-f} hold.  
For any $f \in \mathcal{S}$ and $v \in \mathcal{S}_{\epsilon}$, we denote
\begin{align*}
F_n(y): &= \Phi(y) + \frac{ m_3 }{ 6 \sigma^3 \sqrt{n}} (1-y^2) \phi(y) 
           - \frac{ b(f, v) }{ \sigma \sqrt{n} } \phi(y)   \notag\\
    & =  \Phi(y) - \frac{ m_3 }{ 6 \sigma^3 \sqrt{n}} \phi''(y) 
           - \frac{ b(f, v) }{ \sigma \sqrt{n} } \phi(y),  \quad  y \in \bb R.   
\end{align*}
It follows that 
\begin{align*}
F_n'(y) = \phi(y) - \frac{ m_3 }{ 6 \sigma^3 \sqrt{n}} \phi'''(y) 
           - \frac{ b(f, v) }{ \sigma \sqrt{n} } \phi'(y)
\end{align*}
and 
\begin{align*}
H_n(t): =  \int_{\bb R} e^{ity} F_n'(y) dy 
=  \left[  1 + (it)^3  \frac{ m_3 }{ 6 \sigma^3 \sqrt{n}} + it \frac{ b(f, v) }{ \sigma \sqrt{n} } \right]  e^{- \frac{t^2}{2}},  \quad  t \in \bb R. 
\end{align*}
Since $F_n(\infty) = 1$ and $F_n(-\infty) = 0$, 
by the Berry-Esseen inequality, we get that for any $T>0$, $f \in \mathcal{S}$ and $v \in \mathcal{S}_{\epsilon}$,
\begin{align*}
I_n:  =  \sup_{y \in \bb R}
  \left| \bb P \left( \frac{ \log  \frac{ \langle f, G_n v \rangle}{\langle f, v \rangle} - n \lambda_{\mu} }{\sigma \sqrt{n}} \leq y \right)  - F_n(y)  \right| 
  \leq \frac{1}{\pi} \int_{-T}^T  \Bigg|  \frac{ R^n_{\frac{it}{\sigma \sqrt{n}}, f} 1(v) - H_n(t) }{t} \Bigg| dt 
+ \frac{24 m}{\pi T},
\end{align*}
where $m = \sup_{n \geq 1} \sup_{y \in \bb R} \sup_{f \in \mathcal{S}} \sup_{ v \in \mathcal{S}_{\epsilon} } |F_n'(y)| < \infty$
since $b(f, v)$ is bounded uniformly in $f \in \mathcal{S}$ and $v \in \mathcal{S}_{\epsilon}$. 
For any $\ee >0$ (this should not be confused with $\epsilon$ in the definition of $\mathcal S_{\epsilon}$), 
we choose $\delta_1 >0$ satisfying $\frac{24 m}{\pi \delta_1} < \ee$, 
so that $\frac{24 m}{\pi T} < \frac{\ee}{\sqrt{n}}$ by taking $T = \delta_1 \sqrt{n}$. 
Therefore, for $\delta_2 \in (0,\delta_1)$, we get
\begin{align*}
I_n  \leq  \frac{1}{\pi}  \int_{\delta_2 \sqrt{n} \leq |t| \leq \delta_1 \sqrt{n}}  
     \Bigg|  \frac{ R^n_{\frac{it}{\sigma \sqrt{n}}, f} 1(v) - H_n(t) }{t} \Bigg| dt  
 +  \frac{1}{\pi} \int_{|t| < \delta_2 \sqrt{n} }  \Bigg|  \frac{ R^n_{\frac{it}{\sigma \sqrt{n}}, f} 1(v) - H_n(t) }{t} \Bigg| dt
   + \frac{\ee}{\sqrt{n}}. 
\end{align*}
By \eqref{Non-lattice-Rnit}, 
the first integral is bounded by $C e^{-cn}$, uniformly in $f \in \mathcal{S}$ and $v \in \mathcal{S}_{\epsilon}$. 
Here and in subsequent instances, the constants $c$ vary with $\gamma$.
For the second integral, by Lemma \ref{transfer operator_Pit-f}, we have 
\begin{align*}
R^n_{\frac{it}{\sigma \sqrt{n}}, f} 1(v) - H_n(t)
& =  \lambda^n_{ \frac{it}{\sigma \sqrt{n}} } \Pi_{ \frac{it}{\sigma \sqrt{n}}, f } 1(v) + N_{ \frac{it}{\sigma \sqrt{n}}, f }^n 1(v) 
   -  \left[  1 + (it)^3  \frac{ m_3 }{ 6 \sigma^3 \sqrt{n}} + it \frac{ b(f, v) }{ \sigma \sqrt{n} } \right]  e^{- \frac{t^2}{2}}   \notag\\
& =: \sum_{k=1}^4 J_k(t),
\end{align*}
where 
\begin{align*}
& J_1(t) =  \lambda^n_{ \frac{it}{\sigma \sqrt{n}} }  
   - \left[  1 + (it)^3  \frac{ m_3 }{ 6 \sigma^3 \sqrt{n}}  \right]  e^{- \frac{t^2}{2}}  
  =  e^{- \frac{t^2}{2}} \left( e^{ n \log \lambda_{ \frac{it}{\sigma \sqrt{n}} } + \frac{t^2}{2} } - 1
                            -  (it)^3  \frac{ m_3 }{ 6 \sigma^3 \sqrt{n}}   \right),   \notag\\
& J_2(t) = \lambda^n_{ \frac{it}{\sigma \sqrt{n}} } 
        \left[ \Pi_{ \frac{it}{\sigma \sqrt{n}}, f } 1(v) - 1 -  it \frac{ b(f, v) }{ \sigma \sqrt{n} }  \right],   \notag\\
& J_3(t) =  it \frac{ b(f, v) }{ \sigma \sqrt{n} }  \left[ \lambda^n_{ \frac{it}{\sigma \sqrt{n}} } -  e^{- \frac{t^2}{2}}  \right],     \notag\\
& J_4(t) = N_{ \frac{it}{\sigma \sqrt{n}}, f }^n 1(v). 
\end{align*}
For $J_1(t)$, let $h_n(t) = n \log \lambda_{ \frac{it}{\sigma \sqrt{n}} } + \frac{t^2}{2}$.
Using Lemma \ref{transfer operator_Pit-f}, we have $h_n(0) = h_n'(0) = h_n''(0) = 0$
and $h_n'''(0) = - \frac{i m_3}{\sigma^3 \sqrt{n}}$, so that, as $n\to\infty$, 
\begin{align*}
h_n(t) =  (it)^3 \frac{m_3}{6 \sigma^3 \sqrt{n}}  +  t^3 o \left( \frac{1}{\sqrt{n}} \right). 
\end{align*}
It follows that, for any $t \in [- \delta_2 \sqrt{n}, \delta_2 \sqrt{n}]$,  
\begin{align*}
\left| h_n(t) - (it)^3 \frac{i m_3}{6 \sigma^3 \sqrt{n}} \right| \leq |t|^3  o \left( \frac{1}{\sqrt{n}} \right),
\quad
\left|  (it)^3 \frac{i m_3}{6 \sigma^3 \sqrt{n}} \right|^2 \leq  c \frac{t^6}{n},
\end{align*}
and, by taking $\delta_2 >0$ sufficiently small, 
\begin{align*}
 e^{| h_n(t) |}  \leq  e^{\frac{t^2}{4}}, 
\qquad 
 e^{|  (it)^3 \frac{i m_3}{6 \sigma^3 \sqrt{n}}| } \leq  e^{\frac{t^2}{4}}.   
\end{align*}
Therefore, using the inequality $|e^z - 1 - w| \leq (|z - w| + \frac{1}{2} |w|^2) e^{\max\{|z|, |w| \}}$ for $z, w \in \bb C$,
 we get that for any $t \in [- \delta_2 \sqrt{n}, \delta_2 \sqrt{n}]$, 
\begin{align}\label{Boind-I1t-f}
|J_1(t)| \leq  e^{-\frac{t^2}{4}}  \left[ |t|^3  o \left( \frac{1}{\sqrt{n}} \right)  +  c \frac{t^6}{n} \right], 
\end{align}
which implies that uniformly in $f \in \mathcal{S}$ and $v \in \mathcal{S}_{\epsilon}$, 
\begin{align}\label{Int-Bound-I1t-f}
\int_{|t| < \delta_2 \sqrt{n} }   \left|  \frac{ J_1(t) }{t} \right| dt
=  o \left( \frac{1}{\sqrt{n}} \right). 
\end{align}
For $J_2(t)$,  
by Lemma \ref{transfer operator_Pit-f} and \eqref{def-bfv-aaa}, 
there exists a constant $c>0$ such that for any $t \in [- \delta_2 \sqrt{n}, \delta_2 \sqrt{n}]$,
\begin{align*}
\left| \Pi_{ \frac{it}{\sigma \sqrt{n}}, f } 1(v) - 1 -  it \frac{ b(f, v) }{ \sigma \sqrt{n} }  \right|
\leq  c \frac{t^2}{n}. 
\end{align*}
From \eqref{Boind-I1t-f}, we derive that 
\begin{align*}
\left|  \lambda^n_{ \frac{it}{\sigma \sqrt{n}} } \right|
\leq  e^{-\frac{t^2}{4}}  \left( 1 + c\frac{|t|^3}{\sqrt{n}}  +  c \frac{t^6}{n} \right). 
\end{align*}
Therefore,  we get
\begin{align}\label{Int-Bound-I2t-f}
\int_{|t| < \delta_2 \sqrt{n} }  \left|  \frac{ J_2(t) }{t} \right| dt
\leq  \frac{c}{n}  \int_{|t| < \delta_2 \sqrt{n} }  e^{-\frac{t^2}{4}}  \left( |t| + c \frac{|t|^4}{\sqrt{n}} + c \frac{|t|^7}{n} \right)   dt
\leq \frac{c'}{n}. 
\end{align}
For $J_3(t)$, by \eqref{Boind-I1t-f}
and the fact that $b(f, v)$ is bounded uniformly in $f \in \mathcal{S}$ and $v \in \mathcal{S}_{\epsilon}$, 
it holds that 
\begin{align*}
| J_3(t) | \leq  c \frac{|t|}{\sqrt{n}} \left( | J_1(t) | + c \frac{|t|^3}{\sqrt{n}}  e^{-\frac{t^2}{2}}  \right)
\leq  c'  e^{-\frac{t^2}{4}}   \left( \frac{|t|^4}{n} + \frac{|t|^7}{n^{3/2}} \right), 
\end{align*}
which implies that
\begin{align}\label{Int-Bound-I3t-f}
\int_{|t| < \delta_2 \sqrt{n} }  \left|  \frac{ J_3(t) }{t} \right| dt
\leq \frac{c}{n}  \int_{|t| < \delta_2 \sqrt{n} }  e^{-\frac{t^2}{4}}  \left( |t|^3 +  \frac{|t|^6}{\sqrt{n}} \right) dt
\leq  \frac{c'}{n}.  
\end{align}
For $J_4(t)$, using Lemma \ref{transfer operator_Pit-f} and the fact that $N_{0,f} 1 = 0$, we deduce that 
\begin{align}\label{Int-Bound-I4t-f}
\int_{ |t| < \delta_2 \sqrt{n} }  \left|  \frac{ J_4(t) }{t} \right| dt
\leq C e^{-cn}.  
\end{align}
Putting together  \eqref{Int-Bound-I1t-f}, \eqref{Int-Bound-I2t-f}, \eqref{Int-Bound-I3t-f} and \eqref{Int-Bound-I4t-f}
completes the proof of Proposition \ref{Prop_Edgewoth_aa}. 
\end{proof}

For $f, v  \in \bb R_+^{d} \setminus \{0\}$, let 
\begin{align}\label{Def_A_fv}
A(f,v)
 = \int_{\bb G}  \left( \log \langle f, g v \rangle - \lambda_{\mu} \right)   \mu(dg),
\quad 
B(f,v) = \int_{\bb G} b(f, g \cdot v) \mu(dg). 
\end{align}
It is easy to see that both $A(f,v)$ and $B(f,v)$ are well defined 
under condition \ref{Condi-FK-weak} and $\int_{\bb G} \log N(g) \mu(dg) < \infty$.

\begin{proof}[Proof of Theorem \ref{MainThm_Edgeworth_01}]
We divide the proof into the following three steps.

\textit{Step 1.}
We prove that, as $n \to \infty$, uniformly in $f, v  \in \mathcal{S}$ and $y \in \mathbb{R}$,
\begin{align}\label{MyEdgExp_hh}
& \bb P \left( \frac{\log \langle f, G_n v \rangle - n \lambda_{\mu} }{\sigma \sqrt{n}} \leq y \right)  \notag\\
& =  \Phi(y) + \frac{ m_3 }{ 6 \sigma^3 \sqrt{n}} (1-y^2) \phi(y) 
    - \frac{A(f,v) + B(f, v) }{ \sigma \sqrt{n} } \phi(y)  +  o \left( \frac{ 1 }{\sqrt{n}} \right). 
\end{align}
For $n \geq 2$, $f \in \mathcal{S}$, $v' \in \mathcal{S}_{\epsilon}$ and $y' \in \mathbb{R}$, we denote  
\begin{align*}
P_{n}(v', y') = \bb P 
\left( \frac{ \log  \frac{ \langle f, g_n \ldots g_2 v' \rangle}{\langle f, v' \rangle} - (n-1) \lambda_{\mu} }{\sigma \sqrt{n - 1}} \leq y' \right). 
\end{align*}
By Proposition \ref{Prop_Edgewoth_aa}, we get that, 
as $n \to \infty$, uniformly in $f \in \mathcal{S}$, $v' \in \mathcal{S}_{\epsilon}$ and $y' \in \mathbb{R}$,
\begin{align}\label{Edgeworth_aaa}
P_{n}(v', y') 
& = \Phi(y') + \frac{m_3}{ 6 \sigma^3 \sqrt{n-1}} [1 - (y')^2] \phi(y') 
    - \frac{ b(f, v') }{ \sigma \sqrt{n-1} } \phi(y')  +  o \left( \frac{ 1 }{\sqrt{n}} \right)  \notag\\
& = \int_{- \infty}^{y'} H_n(v', t) dt +  o \left( \frac{ 1 }{\sqrt{n}} \right),  
\end{align}
where 
\begin{align}\label{Pf_Edge_Def_Hn}
H_n(v', t) = \phi(t) - \frac{m_3}{ 6 \sigma^3 \sqrt{n-1}} \phi'''(t) 
    - \frac{ b(f, v') }{ \sigma \sqrt{n-1} } \phi'(t). 
\end{align}
For short, denote 
\begin{align*}
\ell_{f,v} (g,y) = y \sqrt{\frac{n}{n - 1}} + \frac{ \lambda_{\mu} - \log \langle f, g v \rangle}{\sigma \sqrt{n-1}}. 
\end{align*}
Since for any fixed $g \in \supp \mu$, 
\begin{align} \label{Edgeworth_Coeff01}
\bb P \left( \frac{\log \langle f, g_n \ldots g_2 g v \rangle - n \lambda_{\mu} }{\sigma \sqrt{n}} \leq y \right)
=  P_{n} \left(g \cdot v, \ell_{f,v}(g,y) \right),
\end{align}
taking the conditional expectation with respect to $g_1$, we have
\begin{align} \label{Edgeworth_Coeff02}
\bb P \left( \frac{\log \langle f, g_n \ldots g_2 g_1 v \rangle - n \lambda_{\mu} }{\sigma \sqrt{n}} \leq y \right)
= \mathbb E P_{n} \left( g_1 \cdot v,  \ell_{f,v} (g_1,y) \right). 
\end{align}
From \eqref{Edgeworth_aaa}, it follows that,
as $n \to \infty$, uniformly in $g \in \Gamma_{\mu}$, $f, v  \in \mathcal{S}$ and $y \in \mathbb{R}$, 
\begin{align*}
P_{n} \left(g \cdot v, \ell_{f,v}(g,y) \right)
&  = \int_{- \infty}^{\ell_{f,v}(g,y)} H_n(g \cdot v, t) dt +  o \left( \frac{ 1 }{\sqrt{n}} \right)  \notag\\
&  = \int_{- \infty}^{y} H_n(g \cdot v, t) dt -  \int_{\ell_{f,v}(g,y)}^{y} H_n'(g \cdot v, t) dt
     +  o \left( \frac{ 1 }{\sqrt{n}} \right). 
\end{align*}
By the Lebesgue dominated convergence theorem, we get that,
as $n \to \infty$, uniformly in $f, v  \in \mathcal{S}$ and $y \in \mathbb{R}$, 
\begin{align*}
\mathbb E P_{n} \left( g_1 \cdot v,  \ell_{f,v} (g_1,y) \right)   = I_1 - I_2  +  o \left( \frac{ 1 }{\sqrt{n}} \right), 
\end{align*}
where 
\begin{align*}
 I_1 = \int_{\bb G} \int_{- \infty}^{y} H_n(g \cdot v, t) dt \mu(dg),  \quad 
 I_2 = \int_{\bb G} \int_{\ell_{f,v}(g,y)}^{y} H_n(g \cdot v, t) dt \mu(dg). 
\end{align*}

\textit{Estimate of $I_1$.} 
In view of \eqref{Pf_Edge_Def_Hn} and \eqref{Def_A_fv}, by elementary calculations, we get
\begin{align}\label{Pf_Edge_I1}
I_1 & = \int_{\bb G} \int_{- \infty}^{y} 
     \left[ \phi(t) - \frac{m_3}{ 6 \sigma^3 \sqrt{n-1}} \phi'''(t) 
            - \frac{ b(f, g \cdot v) }{ \sigma \sqrt{n-1} } \phi'(t)  \right]  dt \mu(dg)  \notag\\
  & =  \Phi(y) + \frac{m_3}{ 6 \sigma^3 \sqrt{n}} (1 - y^2) \phi(y) 
      - \frac{ B(f,v) }{ \sigma \sqrt{n} } \phi(y) 
       +  O \left( \frac{1}{n} \right). 
\end{align}

\textit{Estimate of $I_2$.} 
By Fubini's theorem, we have the following decomposition:
\begin{align}\label{Pf_Edge_Decom_I2}
I_2 = I_{21} - I_{22} - I_{23} - I_{24},
\end{align}
where 
\begin{align*}
& I_{21} = \int_{\bb G} \int_{y + \frac{\lambda_{\mu} - \log \langle f, g v \rangle}{\sigma \sqrt{n - 1}}}^y  
        \phi(t) dt \mu(dg),   \notag\\
& I_{22} = \int_{\bb G} 
  \int_{y \sqrt{\frac{n}{n - 1}} + \frac{\lambda_{\mu} - \log \langle f, g v \rangle}{\sigma \sqrt{n-1}}}
     ^{y + \frac{\lambda_{\mu} - \log \langle f, g v \rangle}{\sigma \sqrt{n - 1}}} 
        \phi(t) dt \mu(dg),    \notag\\
& I_{23} =  \frac{m_3}{ 6 \sigma^3 \sqrt{n-1}}   
     \int_{-\infty}^y  
    \bb P \left(y \geq t > y \sqrt{\frac{n}{n - 1}} + \frac{\lambda_{\mu} - \log \langle f, g v \rangle}{\sigma \sqrt{n-1}} \right)
    \phi'''(t) dt,   \notag\\
& I_{24} =  \frac{1}{\sigma \sqrt{n-1}} 
     \int_{-\infty}^y   \int_{\bb G}  b(f, g \cdot v) 
  \mathds 1_{ \left\{ y \geq t > y \sqrt{\frac{n}{n - 1}} + \frac{\lambda_{\mu} - \log \langle f, g v \rangle}{\sigma \sqrt{n-1}}  \right\} }
   \mu(dg) \phi'(t) dt. 
\end{align*}

\textit{Estimate of $I_{21}$.}
By a change of variable $t' = \sigma \sqrt{n - 1} (y-t)$, we get
\begin{align*}
I_{21} 
& =  \frac{1}{\sigma \sqrt{n - 1}} \int_{\bb G} \int_{0}^{\log \langle f, g v \rangle - \lambda_{\mu}}  
            \phi \left( y - \frac{t}{\sigma \sqrt{n - 1}} \right) dt \mu(dg).   
\end{align*}
Since the density function $\phi$ is Lipschitz continuous on $\bb R$, 
there exist constants $c,c'>0$ such that for any $y \in \bb R$, 
\begin{align*}
&  \frac{1}{\sigma \sqrt{n - 1}}  \int_{\bb G}  \left| \int_{0}^{\log \langle f, g v \rangle - \lambda_{\mu} } 
      \left[ \phi \left( y - \frac{t}{\sigma \sqrt{n - 1}} \right) - \phi(y) \right]  dt  \right|   \mu(dg)   \notag\\
& \leq  \frac{c}{n}  \int_{\bb G}  \left| \int_{0}^{|\log \langle f, g v \rangle - \lambda_{\mu} |}  |t|  dt  \right|   \mu(dg)  \notag\\
&  =  \frac{c}{n} \bb E \left( |\log \langle f, g v \rangle - \lambda_{\mu} |^2 \right)   \leq \frac{c'}{n}. 
\end{align*}
Therefore, in view of \eqref{Def_A_fv}, 
as $n \to \infty$, uniformly in $f, v  \in \mathcal{S}$ and $y \in \mathbb{R}$, 
\begin{align}\label{Pf_Edge_I21}
I_{21} & = \frac{\phi(y)}{\sigma \sqrt{n - 1}}  
       \int_{\bb G} \int_{0}^{\log \langle f, g v \rangle - \lambda_{\mu} }  dt \mu(dg) 
         +  o \left( \frac{ 1 }{\sqrt{n}} \right)  \notag\\
     & = \frac{\phi(y)}{\sigma \sqrt{n - 1}}  
       \int_{\bb G}  \left( \log \langle f, g v \rangle - \lambda_{\mu} \right)   \mu(dg)
         +  o \left( \frac{ 1 }{\sqrt{n}} \right)  \notag\\
      & = \frac{\phi(y)}{\sigma \sqrt{n}} A(f,v) 
         +  o \left( \frac{ 1 }{\sqrt{n}} \right).  
\end{align}

\textit{Estimate of $I_{22}$.}
Taking $\delta > 0$ sufficiently small, we have, for sufficiently large $n \geq 1$, 
\begin{align}\label{Pf_Edge_I22}
|I_{22}| &  \leq  \left| \int_{\bb G} 
  \int_{y \sqrt{\frac{n}{n - 1}} + \frac{\lambda_{\mu}  - \log \langle f, g v \rangle}{\sigma \sqrt{n-1}}}
     ^{y + \frac{\lambda_{\mu}  - \log \langle f, g v \rangle}{\sigma \sqrt{n - 1}}} 
        \phi(t) dt \mathds 1_{ \left\{ | \lambda_{\mu}  - \log \langle f, g v \rangle| >  n^{1/2 - \delta} \right\} } \mu(dg)
        \right|  \notag\\
 & \quad  +   \left| \int_{\bb G} 
  \int_{y \sqrt{\frac{n}{n - 1}} + \frac{ \lambda_{\mu} - \log \langle f, g v \rangle}{\sigma \sqrt{n-1}}}
     ^{y + \frac{\lambda_{\mu}  - \log \langle f, g v \rangle}{\sigma \sqrt{n - 1}}} 
        \phi(t) dt \mathds 1_{ \left\{ |\lambda_{\mu}  - \log \langle f, g v \rangle| \leq  n^{1/2 - \delta} \right\} } \mu(dg)
        \right|   \notag\\
 & \leq  \bb P \left( |\lambda_{\mu}  - \log \langle f, g v \rangle| >  n^{1/2 - \delta} \right) 
        + \sup_{|l| \leq 2 \sigma^{-1} n^{-\delta}} \left| \int^{y \sqrt{\frac{n}{n - 1}} + l}_{y + l}  \phi(t) dt \right|  \notag\\
 &  \leq  c \frac{1 + \bb E (\log^2 N(g))}{n^{1 - 2 \delta}} 
     +  \left( \sqrt{\frac{n}{n-1}} - 1 \right) \mathds 1_{\{ |y| \leq 1 \}}    \notag\\
 & \quad    + \left( \sqrt{\frac{n}{n-1}} - 1 \right) \mathds 1_{\{ |y| > 1 \}}   
              |y| \sup_{|l| \leq 2 \sigma^{-1} n^{-\delta}} \phi \left( y \sqrt{\frac{n}{n - 1}} + l \right)  \notag\\
 & \leq  \frac{c'}{n^{1 - 2 \delta}}. 
\end{align}

\textit{Estimate of $I_{23}$.}
Taking $\delta > 0$ sufficiently small, 
by the Lebesgue dominated convergence theorem and Markov's inequality, we get that,
as $n \to \infty$, uniformly in $f, v  \in \mathcal{S}$ and $y \in \mathbb{R}$, 
\begin{align*}
&  \int_{-\infty}^y
    \bb P \left(y \geq t > y \sqrt{\frac{n}{n - 1}} + \frac{\lambda_{\mu} - \log \langle f, g v \rangle}{\sigma \sqrt{n-1}} \right) 
    \phi'''(t) dt  \notag\\
&  \leq  \int_{-\infty}^y 
    \bb P \left(y \geq t > y \sqrt{\frac{n}{n - 1}} + \frac{\lambda_{\mu} - \log \langle f, g v \rangle}{\sigma \sqrt{n-1}}, 
                    |\log \langle f, g v \rangle| \leq n^{1/2 - \delta} \right)  \phi'''(t) dt  \notag\\
& \quad  +  \bb P \left( |\log \langle f, g v \rangle| >  n^{1/2 - \delta} \right)  
            \int_{-\infty}^y  \phi'''(t) dt   \notag\\
& \leq o(1) + C \frac{\bb E (\log^2 N(g) )}{n^{1 - 2 \delta}}  = o(1). 
\end{align*}
This implies that, as $n \to \infty$, uniformly in $f, v  \in \mathcal{S}$ and $y \in \mathbb{R}$, 
\begin{align}\label{Pf_Edge_I23}
I_{23} = o \left( \frac{ 1 }{\sqrt{n}} \right).
\end{align}

\textit{Estimate of $I_{24}$.} 
Using the fact that $|b(f, g \cdot v)|$ is bounded by a constant,  
uniformly in $f, v  \in \mathcal{S}$ and $g \in \Gamma_{\mu}$, 
one can follow the estimate of $I_{23}$ to show that, 
as $n \to \infty$, uniformly in $f, v  \in \mathcal{S}$ and $y \in \mathbb{R}$, 
\begin{align}\label{Pf_Edge_I24}
I_{24} = o \left( \frac{ 1 }{\sqrt{n}} \right).
\end{align}

Substituting \eqref{Pf_Edge_I21}, \eqref{Pf_Edge_I22}, \eqref{Pf_Edge_I23} and \eqref{Pf_Edge_I24} into \eqref{Pf_Edge_Decom_I2},
we get that, as $n \to \infty$, uniformly in $f, v  \in \mathcal{S}$ and $y \in \mathbb{R}$, 
\begin{align*}
I_{2} = \frac{\phi(y)}{\sigma \sqrt{n}}  A(f,v)
       + o \left( \frac{ 1 }{\sqrt{n}} \right).
\end{align*}
Combining this with \eqref{Pf_Edge_I1}, we conclude the proof of \eqref{MyEdgExp_hh}. 

\textit{Step 2.}
For any compact set $K \subset \bb{R}^{d}_+ \setminus \{0\}$, 
we prove that uniformly in $f, v  \in K$ and $y \in \mathbb{R}$,  
\begin{align}\label{Edgeworth-f-v}
& \bb P \left( \frac{\log \langle f, G_n v \rangle - n \lambda_{\mu} }{\sigma \sqrt{n}} \leq y \right)  \notag\\
& =  \Phi(y) + \frac{ m_3 }{ 6 \sigma^3 \sqrt{n}} (1-y^2) \phi(y) 
    - \frac{A(f,v) + B(f, v) }{ \sigma \sqrt{n} } \phi(y)  +  o \left( \frac{ 1 }{\sqrt{n}} \right). 
\end{align}
Denote 
\begin{align}\label{Notation-fv-yn}
\bar f = \frac{f}{\|f\|},  \quad  \bar v = \frac{v}{\|v\|},  \quad y_n = y - \frac{\log (\|f\| \|v\|)}{\sigma \sqrt{n}}. 
\end{align}
Applying \eqref{MyEdgExp_hh}, we get that uniformly in $f, v  \in K$ and $y \in \mathbb{R}$,
\begin{align}\label{Edge-barfbarv}
& \bb P \left( \frac{\log \langle f, G_n v \rangle - n \lambda_{\mu} }{\sigma \sqrt{n}} \leq y \right) 
  =  \bb P \left( \frac{\log \langle \bar f, G_n \bar v \rangle - n \lambda_{\mu} }{\sigma \sqrt{n}} \leq y_n \right)   \notag\\
& =  \Phi(y_n) + \frac{ m_3 }{ 6 \sigma^3 \sqrt{n}} (1-y_n^2) \phi(y_n) 
    - \frac{A(\bar f, \bar v) + B(\bar f, \bar v) }{ \sigma \sqrt{n} } \phi(y_n)  +  o \left( \frac{ 1 }{\sqrt{n}} \right). 
\end{align}
By Taylor's expansion, it holds that uniformly in $f, v  \in K$ and $y \in \mathbb{R}$, 
\begin{align*}
\Phi(y_n) & = \Phi(y) -  \frac{\log (\|f\| \|v\|)}{\sigma \sqrt{n}} \phi(y)  +  O \left( \frac{1}{n} \right),   \quad\\
(1-y_n^2) \phi(y_n) & =  (1-y^2) \phi(y) 
      +  O \left( \frac{1}{\sqrt{n}} \right),   \quad\\
 \phi(y_n) & =  \phi(y)   
     +  O \left( \frac{1}{\sqrt{n}} \right). 
\end{align*}
Substituting these expansions into \eqref{Edge-barfbarv} gives that uniformly in $f, v  \in K$ and $y \in \mathbb{R}$, 
\begin{align*}
& \bb P \left( \frac{\log \langle f, G_n v \rangle - n \lambda_{\mu} }{\sigma \sqrt{n}} \leq y \right)   \notag\\
& =  \Phi(y) + \frac{ m_3 }{ 6 \sigma^3 \sqrt{n}} (1-y^2) \phi(y) 
    - \frac{A(\bar f, \bar v) + B(\bar f, \bar v) + \log (\|f\| \|v\|) }{ \sigma \sqrt{n} } \phi(y)   
    +  o \left( \frac{ 1 }{\sqrt{n}} \right). 
\end{align*}
By \eqref{Def_A_fv}, we have $A(\bar f, \bar v) = A(f, v) - \log (\|f\| \|v\|)$ and $B(\bar f, \bar v) = B(f, v)$ for any $f, v  \in K$.  
This concludes the proof of \eqref{Edgeworth-f-v}. 

\textit{Step 3.}
We deduce Theorem \ref{MainThm_Edgeworth_01} from \eqref{Edgeworth-f-v}.  
By \eqref{func-phi-001} and \eqref{Def_A_fv}, we have that for any $f, v  \in K$,  
\begin{align*}
B(f,v) & = \int_{\bb G} b(f, g \cdot v) \mu(dg)
 =  \int_{\bb G}  \lim_{n \to \infty}
\mathbb{E} \left( \log \frac{\langle f, G_n (g \cdot v) \rangle}{\langle f, g \cdot v \rangle} - n \lambda_{\mu} \right)  \mu(dg)  \notag\\
& = \int_{\bb G}  \lim_{n \to \infty}
\mathbb{E} \left( \log \frac{\langle f, G_n g v \rangle}{\langle f, g v \rangle} - n \lambda_{\mu} \right)  \mu(dg)  \notag\\
& = \lim_{n \to \infty}
\mathbb{E} \left( \log \frac{\langle f, G_{n+1} v \rangle}{\langle f, g_1 v \rangle} - (n+1) \lambda_{\mu} \right)  + \lambda_{\mu}  \notag\\
& = \lim_{n \to \infty}
\mathbb{E} \left( \log \| G_{n+1} v \| - (n+1) \lambda_{\mu} \right)  - \bb E \left( \log \langle f, g_1 v \rangle - \lambda_{\mu} \right)  \notag\\
& \quad  +  \lim_{n \to \infty}  \bb E \left( \log \langle f, G_{n+1} \cdot v \rangle \right)  \notag\\
& = b(v) - A(f,v) + \lim_{n \to \infty}  \bb E \left( \log \langle f, G_{n+1} \cdot v \rangle \right),
\end{align*}
where in the last equality we used \eqref{drift-b001} and \eqref{Def_A_fv}. 
Since the function $v \mapsto \log \langle f,  g \cdot v \rangle$ is H\"{o}lder continuous on $K$ for any $g \in \Gamma_{\mu}$,
and the Markov chain $(G_n \cdot v)_{n \geq 0}$ has a unique invariant probability measure $\nu$, 
we have that for $f \in K$, 
\begin{align*}
\lim_{n \to \infty}  \bb E \left( \log \langle f, G_{n+1} \cdot v \rangle \right)
=   \lim_{n \to \infty}  \int_{\bb G} \bb E \left( \log \langle f, G_{n} \cdot  (g \cdot v) \rangle \right)  \mu(dg)
=   \int_{ \mathcal{S} }  \log \langle f, v \rangle \nu(dv) = d(f). 
\end{align*}
Using \eqref{MyEdgExp_hh}, we complete the proof of Theorem \ref{MainThm_Edgeworth_01}. 
\end{proof}

\subsection{Proof of Theorem \ref{Thm-BerryEsseen}}

By the Berry-Esseen inequality, there exists a constant $c>0$ such that for any $T>0$ and $v \in \mathcal{S}$,
\begin{align*}
& \sup_{y \in \bb R}
  \left| \bb P \left( \frac{\log \|G_n v \| - n \lambda_{\mu} }{\sigma \sqrt{n}} \leq y \right)  - \Phi(y)  \right|  
 \leq \frac{1}{\pi} \int_{-T}^T  \Bigg|  \frac{ R^n_{\frac{it}{\sigma \sqrt{n}}} 1(v) - e^{- \frac{t^2}{2}} }{t} \Bigg| dt 
+ \frac{c}{T}.   
\end{align*}
Below we shall choose $T = \delta \sqrt{n}$ and take $\delta >0$ to be sufficiently small. 
By Lemma \ref{transfer operator_Pit}, we have 
\begin{align*}
R^n_{\frac{it}{\sigma \sqrt{n}}} 1(v) - e^{- \frac{t^2}{2}}
=  \lambda^n_{ \frac{it}{\sigma \sqrt{n}} } \Pi_{ \frac{it}{\sigma \sqrt{n}} } 1(v) + N_{ \frac{it}{\sigma \sqrt{n}} }^n 1(v)  
   - e^{- \frac{t^2}{2}}   
=: \sum_{j=1}^3 I_j(t),
\end{align*}
where 
\begin{align*}
& I_1(t) =  \lambda^n_{ \frac{it}{\sigma \sqrt{n}} }  -  e^{- \frac{t^2}{2}}
  =  e^{- \frac{t^2}{2}} \bigg( e^{ n \log \lambda_{ \frac{it}{\sigma \sqrt{n}} } + \frac{t^2}{2} } - 1 \bigg),   \notag\\
& I_2(t) = \lambda^n_{ \frac{it}{\sigma \sqrt{n}} } \left[ \Pi_{ \frac{it}{\sigma \sqrt{n}} } 1(v) - 1  \right],   \notag\\
\quad
& I_3(t) = N_{ \frac{it}{\sigma \sqrt{n}} }^n 1(v). 
\end{align*}
Let $h(t) = n \log \lambda_{ \frac{it}{\sigma \sqrt{n}} } + \frac{t^2}{2}$.
By Lemma \ref{transfer operator_Pit}, we have $h(0) = h'(0) = h''(0) = 0$
and $h'''(0) = - \frac{i m_3}{\sigma^3 \sqrt{n}}$, so that 
\begin{align*}
h(t) = - \frac{i m_3}{6 \sigma^3 \sqrt{n}} t^3 + o \left( \frac{t^3}{\sqrt{n}} \right). 
\end{align*}
Using the inequality $|e^z - 1| \leq |z| e^{|z|}$ for $z \in \bb C$ 
and taking $\delta >0$ to be sufficiently small, we get that for any $t \in [- \delta \sqrt{n}, \delta \sqrt{n}]$, 
\begin{align}\label{Boind-I1t}
|I_1(t)| \leq  e^{- \frac{t^2}{2}} \frac{c|t|^3}{\sqrt{n}} e^{\frac{t^2}{4}} = \frac{c|t|^3}{\sqrt{n}} e^{-\frac{t^2}{4}}, 
\end{align}
so that
\begin{align}\label{Int-Bound-I1t}
\int_{- \delta \sqrt{n}}^{\delta \sqrt{n}}  \left|  \frac{ I_1(t) }{t} \right| dt
\leq  \frac{c}{\sqrt{n}}  \int_{- \delta \sqrt{n}}^{\delta \sqrt{n}}  |t|^3 e^{-\frac{t^2}{4}}  dt
\leq \frac{c'}{\sqrt{n}}. 
\end{align}
For $I_2(t)$,  from \eqref{Boind-I1t} we get
\begin{align}\label{Bound-exp-t2}
\left|  \lambda^n_{ \frac{it}{\sigma \sqrt{n}} } \right|
\leq   \frac{c|t|^3}{\sqrt{n}} e^{-\frac{t^2}{4}} +  e^{- \frac{t^2}{2}}
\leq   c' e^{- \frac{t^2}{8}}. 
\end{align}
By Lemma \ref{transfer operator_Pit}, there exists a constant $c>0$ such that for any $t \in [- \delta \sqrt{n}, \delta \sqrt{n}]$,
\begin{align*}
\left| \Pi_{ \frac{it}{\sigma \sqrt{n}} } 1(v) - 1  \right|
\leq  c \frac{|t|}{\sigma \sqrt{n}}, 
\end{align*}
which, together with \eqref{Bound-exp-t2}, implies that
\begin{align}\label{Int-Bound-I2t}
\int_{- \delta \sqrt{n}}^{\delta \sqrt{n}}  \left|  \frac{ I_2(t) }{t} \right| dt
\leq  \frac{c}{\sqrt{n}}  \int_{- \delta \sqrt{n}}^{\delta \sqrt{n}}  |t| e^{-\frac{t^2}{8}}  dt
\leq \frac{c'}{\sqrt{n}}. 
\end{align}
For $I_3(t)$, using again Lemma \ref{transfer operator_Pit} and the fact that $N_0 1 = 0$, we deduce that 
\begin{align}\label{Int-Bound-I3t}
\int_{- \delta \sqrt{n}}^{\delta \sqrt{n}}  \left|  \frac{ I_3(t) }{t} \right| dt
\leq C e^{-cn}.  
\end{align}
Combining \eqref{Int-Bound-I1t}, \eqref{Int-Bound-I2t} and \eqref{Int-Bound-I3t}, we get
\begin{align}\label{Berry-Esseen-cocy}
& \sup_{y \in \bb R}
  \left| \bb P \left( \frac{\log \|G_n v \| - n \lambda_{\mu} }{\sigma \sqrt{n}} \leq y \right)  - \Phi(y)  \right|  
 \leq \frac{c}{\sqrt{n}}.   
\end{align}
This, together with Lemma \ref{lem equiv Kesten} and the fact that $\langle f, G_n v \rangle \leq \|G_n v \|$, 
implies that there exists a constant $c>0$ such that for any $n \geq 1$, $f, v  \in \mathcal S$ and $y \in \mathbb{R}$,
\begin{align*}
\left| \bb P \left( \frac{\log \langle f, G_n v \rangle - n \lambda_{\mu} }{\sigma \sqrt{n}} \leq y \right)  - \Phi(y)  \right|
\leq  \frac{c}{\sqrt{n}}, 
\end{align*}
which yields \eqref{Main-BerryEsseen-Coeff}. 

By \cite[Lemma 4.5]{BDGM14},  
there exists a constant $c>0$ such that $\|G_n\| \leq  c \|G_n v\|$ for any $v \in \mathcal S_{\epsilon}$ and $n \geq 1$. 
This, together with the fact $\|G_n v\| \leq \|G_n\|$ and \eqref{Berry-Esseen-cocy}, yields the Berry-Esseen theorem for $\|G_n\|$. 
Since $\langle e_1, G_n e_1 \rangle \leq \rho(G_n) \leq \|G_n\|$, 
the Berry-Esseen theorem for $\rho(G_n)$ follows from that for  $\|G_n\|$ and \eqref{Main-BerryEsseen-Coeff}.

\section{Proofs of upper and lower large deviations} \label{sec proof scalar product}

The goal of this section is to establish Theorem \ref{Thm-Petrov-Uppertail}. 
Using the spectral gap theory for the cocycle $\sigma_f (G_n, v)$ developed in Section \ref{sec:spec gap entries}, 
together with the Edgeworth expansion for the couple $(G_n \cdot v, \sigma_f (G_n, v))$
with a target function $\varphi$ on the Markov chain $(G_n \cdot v)$ under the changed measure $\bb Q^{v}_{s,f}$, 
we first prove the following large deviation expansions for $\sigma_f (G_n, v)$ 
with $f \in \mathcal{S}$ and $v \in \mathcal{S}_{\epsilon}$.

\begin{proposition}\label{Thm_Cocycle_01} 

\noindent 1. 
Assume \ref{Condi-FK-weak} and \ref{Condi-NonLattice}. 
Let $J^+ \subseteq (I_\mu^+)^\circ$ be a compact set. 
Then, for any $0 < \gamma \leq \min_{t \in J^+} \{ t, 1\}$, 
we have, as $n \to \infty$, 
uniformly in $s \in J^+$, $f \in \mathcal{S}$, $v \in \mathcal{S}_{\epsilon}$ and $\varphi \in \mathscr B_{\gamma}$, 
with $q = \Lambda'(s)$, 
\begin{align}\label{BahaRao_Version_Coeff}
& \mathbb{E} \Big[ \varphi(G_n \!\cdot\! v) 
      \mathds 1_{ \big\{ \log \frac{\langle f,  G_n v \rangle}{\langle f, v \rangle}  \geq  nq \big\} }  \Big]   \notag\\
& = \frac{ r_{s, f}(v) }{ s\sigma_{s}\sqrt{2\pi n} } e^{- n\Lambda^{*}(q)}  
 \left[ \pi_{s,f}(\varphi  r_{s,f}^{-1}) + \pi_{s,f}(\varphi  r_{s,f}^{-1}) o(1)  +  \|\varphi\|_{\gamma} O \left( \frac{1}{\sqrt{n}} \right)  \right]. 
\end{align}

\noindent 2. 
Assume \ref{Condi-FK} and \ref{Condi-NonLattice}.
Let $J^- \subseteq (I_\mu^-)^\circ$ be a compact set. 
Then, 
for any $0 < \gamma \leq \min_{t \in J^-} \{ |t|, 1\}$, 
we have, as $n \to \infty$, 
uniformly in $s \in J^-$, $f \in \mathcal{S}$, $v \in \mathcal{S}_{\epsilon}$ and $\varphi \in \mathscr B_{\gamma}$, 
with $q = \Lambda'(s)$, 
\begin{align}\label{BahaRao_Version_Coeff-Neg}
& \mathbb{E} \Big[ \varphi(G_n \!\cdot\! v) 
      \mathds 1_{ \big\{ \log \frac{\langle f,  G_n v \rangle}{\langle f, v \rangle}  \leq  nq \big\} }  \Big]   \notag\\
& = \frac{ r_{s, f}(v) }{ -s\sigma_{s}\sqrt{2\pi n} } e^{- n\Lambda^{*}(q)}  
 \left[ \pi_{s,f}(\varphi  r_{s,f}^{-1}) + \pi_{s,f}(\varphi  r_{s,f}^{-1}) o(1)  +  \|\varphi\|_{\gamma} O \left( \frac{1}{\sqrt{n}} \right)  \right]. 
\end{align}
\end{proposition}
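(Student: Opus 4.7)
My plan is to carry out a Bahadur-Rao-Petrov argument for the cocycle $\sigma_f(G_n,v) = \log\frac{\langle f, G_n v\rangle}{\langle f, v\rangle}$, using the change-of-measure formula \eqref{change measure equ coeff2} to reduce to a near-central regime, then applying the spectral decomposition of the perturbed operator $R_{s,z,f}$ from Proposition \ref{perturbation thm 02-f}. I describe the argument for the upper-tail case (part 1); part 2 is analogous with obvious sign changes using Proposition \ref{perturbation thm 02-f} with $s < 0$.

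\textbf{Step 1: Change of measure.} Set $T_n := \log\frac{\langle f, G_n v\rangle}{\langle f, v\rangle} - nq$ and $\psi := \varphi/r_{s,f}$. Insert the identity $\mathds 1_{\{T_n \geq 0\}} = e^{sT_n}e^{-sT_n}\mathds 1_{\{T_n \geq 0\}}$ and note $\kappa(s)^n e^{-snq} = e^{-n\Lambda^*(q)}$. Applying \eqref{change measure equ coeff2} with a suitable target $h$ gives
\begin{equation*}
\mathbb{E}\bigl[\varphi(G_n\!\cdot\! v)\mathds 1_{\{T_n \geq 0\}}\bigr] = r_{s,f}(v)\, e^{-n\Lambda^*(q)}\, I_n, \qquad I_n := \mathbb{E}_{\mathbb{Q}^v_{s,f}}\bigl[\psi(G_n\!\cdot\! v)\, e^{-sT_n}\mathds 1_{\{T_n \geq 0\}}\bigr].
\end{equation*}

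\textbf{Step 2: Edgeworth expansion under the tilted measure.} Under $\mathbb{Q}^v_{s,f}$, the characteristic-function identity $\mathbb{E}_{\mathbb{Q}^v_{s,f}}[e^{itT_n}\psi(G_n\!\cdot\! v)] = R^n_{s,it,f}\psi(v)$ holds by construction of $R_{s,z,f}$ in \eqref{def R tu 01-f}. For $|t|$ small, Proposition \ref{perturbation thm 02-f} yields the decomposition $R^n_{s,it,f}\psi = \lambda_{s,it}^n\Pi_{s,it,f}\psi + N^n_{s,it,f}\psi$ with $\Pi_{s,0,f}\psi = \pi_{s,f}(\psi)$, $\lambda_{s,it} = 1 - \tfrac{\sigma_s^2}{2}t^2 + O(t^3)$ (since $\Lambda'(s) = q$ and $\Lambda''(s) = \sigma_s^2$), and $\sup\rho(N_{s,z,f}) < 1$ uniformly over $s \in J^+, f \in \mathcal S$; for $|t|$ bounded away from zero, Proposition \ref{Sca expdecay pertur-f} furnishes uniform exponential decay. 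Combining these by Fourier inversion in the spirit of the Berry-Esseen inequality yields the uniform Edgeworth-type expansion
\begin{equation*}
F_n(y) := \mathbb{E}_{\mathbb{Q}^v_{s,f}}\bigl[\psi(G_n\!\cdot\! v)\mathds 1_{\{T_n \leq y\sigma_s\sqrt{n}\}}\bigr] = \pi_{s,f}(\psi)\,\Phi(y) + \|\psi\|_\gamma\, O\bigl(n^{-1/2}\bigr),
\end{equation*}
uniformly in $s \in J^+$, $f \in \mathcal S$, $v \in \mathcal S_\epsilon$, $\psi \in \mathscr B_\gamma$ and $y \in \mathbb R$.

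\textbf{Step 3: Laplace-type evaluation.} Writing $I_n$ as a Stieltjes integral against $F_n$, substituting $y = t/(\sigma_s\sqrt{n})$, integrating by parts and plugging in Step 2 gives, after the standard computation
\begin{equation*}
s\sigma_s\sqrt{n}\int_0^\infty e^{-s\sigma_s\sqrt{n}\,y}\phi(y)\, dy \longrightarrow \frac{1}{\sqrt{2\pi}} \qquad (n \to \infty),
\end{equation*}
the asymptotic
\begin{equation*}
I_n = \frac{1}{s\sigma_s\sqrt{2\pi n}}\Bigl[\pi_{s,f}(\psi) + \pi_{s,f}(\psi)\, o(1) + \|\psi\|_\gamma\, O\bigl(n^{-1/2}\bigr)\Bigr].
\end{equation*}
Unwinding $\psi = \varphi\, r_{s,f}^{-1}$, combining with Step 1, and absorbing $\|\psi\|_\gamma \leq c_s\|\varphi\|_\gamma$ into the error (using that $r_{s,f}$ is bounded below on $\mathcal S_\epsilon$ by Proposition \ref{change of measure coeffi 01}), produces \eqref{BahaRao_Version_Coeff}.

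\textbf{Main obstacle.} The delicate point is securing \emph{uniformity} in $s \in J^+$, $f \in \mathcal S$, $v \in \mathcal S_\epsilon$ and $\varphi \in \mathscr B_\gamma$ simultaneously in the Edgeworth expansion of Step 2: the Fourier inversion requires a uniform spectral gap for $R_{s,it,f}$ both in a neighbourhood of $t = 0$ (supplied by the analyticity and $\sup\rho(N_{s,z,f}) < 1$ in Proposition \ref{perturbation thm 02-f}) and on compact sets of $t \neq 0$ (Proposition \ref{Sca expdecay pertur-f}), together with uniform control of the derivatives of $s \mapsto \kappa(s), \Pi_{s,z,f}$. The second subtlety is isolating $\pi_{s,f}(\varphi r_{s,f}^{-1})$ as the correct leading constant: one decomposes $\psi = \pi_{s,f}(\psi) \cdot 1 + (\psi - \pi_{s,f}(\psi))$ so that the first term drives the Laplace asymptotic while the centred part contributes only to the $\|\varphi\|_\gamma O(n^{-1/2})$ remainder via the spectral gap of $N_{s,it,f}$.
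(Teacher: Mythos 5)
Your overall route---the change of measure \eqref{change measure equ coeff2}, spectral/Fourier analysis of $R_{s,it,f}$ via Propositions \ref{perturbation thm 02-f} and \ref{Sca expdecay pertur-f}, and a Laplace-type evaluation---is the same as the paper's, but there is a genuine gap in the precision of your Step 2, and it breaks Step 3. After integration by parts and rescaling, the quantity to evaluate is $\sqrt{2\pi}\,a_n\int_0^\infty e^{-y}\bigl[F_n(y/a_n)-F_n(0)\bigr]dy$ with $a_n=s\sigma_s\sqrt{n}$: only increments of $F_n$ over a window of width $\asymp n^{-1/2}$ matter, and these increments are themselves of size $\asymp n^{-1/2}$. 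Your Step 2 is only a Berry--Esseen-type statement, $F_n(y)=\pi_{s,f}(\psi)\Phi(y)+\|\psi\|_\gamma O(n^{-1/2})$, with an error merely bounded uniformly in $y$. Fed into Step 3, this error contributes $a_n\cdot\|\psi\|_\gamma O(n^{-1/2})=\|\psi\|_\gamma O(1)$, i.e.\ an error of the same order as the main term $\pi_{s,f}(\psi)$; the displayed asymptotic for $I_n$ at the end of your Step 3 therefore does not follow from your Step 2, and one cannot conclude \eqref{BahaRao_Version_Coeff} (not even with a $\|\varphi\|_\gamma\,o(1)$ error). What is actually needed---and what the paper invokes as \eqref{Thm_Edgeworth_Target_02}---is a genuine first-order Edgeworth expansion under $\mathbb{Q}^{v}_{s,f}$, namely $F_n(y)=\pi_{s,f}(\psi)\bigl[\Phi(y)+\frac{\Lambda'''(s)}{6\sigma_s^3\sqrt{n}}(1-y^2)\phi(y)\bigr]-\frac{b_{s,\psi}(f,v)}{\sigma_s\sqrt{n}}\phi(y)+\pi_{s,f}(\psi)\,o(n^{-1/2})+\|\psi\|_\gamma O(n^{-1})$, in which the $n^{-1/2}$-order terms are explicit smooth functions of $y$ (so their increments over the window are $O(n^{-1})$, contributing $\|\varphi\|_\gamma O(n^{-1/2})$ after multiplication by $a_n$) and the unstructured remainder is $O(n^{-1})$ rather than $O(n^{-1/2})$. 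Only with this finer input does the Laplace evaluation return $\pi_{s,f}(\psi)+\pi_{s,f}(\psi)o(1)+\|\varphi\|_\gamma O(n^{-1/2})$.

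Your suggested remedy in the ``Main obstacle'' paragraph---decomposing $\psi=\pi_{s,f}(\psi)\cdot 1+(\psi-\pi_{s,f}(\psi))$ and asserting that the centred part contributes only $\|\varphi\|_\gamma O(n^{-1/2})$ ``via the spectral gap of $N_{s,it,f}$''---does not close this gap: the centred part's contribution is not carried by $N_{s,it,f}$ alone, but enters at order $n^{-1/2}$ through the first derivative of $z\mapsto\Pi_{s,z,f}$ at $z=0$, which is exactly the drift-type function $b_{s,\psi}(f,v)$ of Lemma \ref{Lem-Bs}. Showing that this term affects the final bracket only at order $n^{-1/2}$ requires precisely the Edgeworth-level control described above (a smooth-in-$y$ $n^{-1/2}$ correction plus an $O(n^{-1})$ remainder), established uniformly in $s\in J^+$, $f\in\mathcal{S}$, $v\in\mathcal{S}_\epsilon$ and $\varphi\in\scr B_\gamma$, e.g.\ by adapting the Fourier-inversion argument of Proposition \ref{Prop_Edgewoth_aa} (or \cite[Theorem 2.2]{XGL19b}) to the tilted operators $R_{s,it,f}$; this is the step your proposal leaves unproved.
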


\begin{proof}
We only prove \eqref{BahaRao_Version_Coeff} since the proof of \eqref{BahaRao_Version_Coeff-Neg}
can be carried out in the same way. 
By a change of measure formula \eqref{change measure equ coeff2}
and the fact that $\kappa(s)^n e^{-snq} = e^{- n\Lambda^{*}(q)}$, we have
\begin{align*} 
 I_n(f, v) &:= \mathbb{E} \Big[ \varphi(G_n \!\cdot\! v) 
      \mathds 1_{ \big\{ \log \frac{\langle f,  G_n v \rangle}{\langle f, v \rangle} - nq \geq 0 \big\} }  \Big]  \notag\\
& = \kappa(s)^n r_{s,f}(v)
  \mathbb{E}_{\bb Q^{v}_{s,f}} \Big[  (\varphi r_{s,f}^{-1})(G_n \!\cdot\! v)
       e^{-s \log \frac{\langle f,  G_n v \rangle}{\langle f, v \rangle}  }
       \mathds 1_{ \big\{ \log \frac{\langle f,  G_n v \rangle}{\langle f, v \rangle} - nq \geq 0 \big\} } \Big]  \notag\\
& =  e^{- n\Lambda^{*}(q)}  r_{s,f}(v)
  \mathbb{E}_{\bb Q^{v}_{s,f}} \Big[  (\varphi r_{s,f}^{-1})(G_n \!\cdot\! v)
       e^{-s \big( \log \frac{\langle f,  G_n v \rangle}{\langle f, v \rangle} - nq \big) }
       \mathds 1_{ \big\{ \log \frac{\langle f,  G_n v \rangle}{\langle f, v \rangle} - nq \geq 0 \big\} } \Big].  
\end{align*}
For brevity, we denote $a_n = s \sigma_s \sqrt{n}$ and 
\begin{align*}
T_n = \frac{ \log \frac{\langle f,  G_n v \rangle}{\langle f, v \rangle} - nq }{ \sigma_s \sqrt{n} }, 
\quad
F_n(y) = \bb Q^{v}_{s,f} \left[  (\varphi r_{s,f}^{-1})(G_n \!\cdot\! v)  \mathds 1_{ \{ T_n \leq y \} }   \right],  
\end{align*}
it follows that 
\begin{align*}
I_n(f, v) & =  e^{- n\Lambda^{*}(q)}  r_{s,f}(v)
  \mathbb{E}_{\bb Q^{v}_{s,f}} \Big[  (\varphi r_{s,f}^{-1})(G_n \!\cdot\! v)
       e^{-s \sigma_s \sqrt{n} T_n }
       \mathds 1_{ \{ T_n \geq 0 \} } \Big]    \notag\\
  & =  e^{- n\Lambda^{*}(q)}  r_{s,f}(v)
  \mathbb{E}_{\bb Q^{v}_{s,f}} \left[  (\varphi r_{s,f}^{-1})(G_n \!\cdot\! v)
       e^{- a_n T_n }
       \mathds 1_{ \{ T_n \geq 0 \} } \right]    \notag\\
 & =  e^{- n\Lambda^{*}(q)}  r_{s,f}(v)  \int_0^{\infty}  e^{- a_n y} d F_n(y).  
\end{align*}
Using integration by parts and a change of variable, we obtain
\begin{align*}
I_n(f, v) 
& = e^{- n\Lambda^{*}(q)}  r_{s,f}(v) \left[ F_n(0) +  a_n \int_0^{\infty}  e^{- a_n y}  F_n(y) dy  \right]
  \notag\\
& =    e^{- n\Lambda^{*}(q)}  r_{s,f}(v)   \int_0^{\infty}  a_n  e^{- a_n y}  \left( F_n(y) - F_n(0) \right)  dy   \notag\\
& =   e^{- n\Lambda^{*}(q)}  r_{s,f}(v)   \int_0^{\infty}   e^{- y}  \left[ F_n\left( \frac{y}{a_n} \right) - F_n(0) \right]  dy,  
\end{align*}
so that 
\begin{align}\label{LD-Decom-a}
\frac{ s\sigma_{s}\sqrt{2\pi n} }{ r_{s, f}(v) } e^{- n\Lambda^{*}(q)}  I_n(f, v) 
=  \sqrt{2\pi}  a_n   \int_0^{\infty}   e^{- y}  \left[ F_n\left( \frac{y}{a_n} \right) - F_n(0) \right]  dy. 
\end{align}
Using the spectral gap theory for the cocycle 
$\log \frac{\langle f, G_n v \rangle}{\langle f, v \rangle}$ developed in Section \ref{sec:spec gap entries} 
and proceeding in the same way as in the proof of \cite[Theorem 2.2]{XGL19b} for the couple $(G_n \cdot v, \log \|G_n v\|)$, 
one can check that, as $n \to \infty$, uniformly in $s \in J^+$,
 $f \in \mathcal{S}$,  $v \in \mathcal{S}_{\epsilon}$, $y \in \mathbb{R}$ and $\varphi \in \scr B_{\gamma}$,
\begin{align}\label{Thm_Edgeworth_Target_02}
&  \mathbb{E}_{\mathbb{Q}_{s, f}^v}
   \Big[  \varphi(G_n \cdot v) 
   \mathds{1}_{ \Big\{ \frac{\log \frac{\langle f, G_n v \rangle}{\langle f, v \rangle} 
                - n \Lambda'(s) }{\sigma_s \sqrt{n}} \leq y \Big\} } \Big]
   \nonumber \\
&  =  \pi_{s,f}(\varphi) \Big[  \Phi(y) + \frac{\Lambda'''(s)}{ 6 \sigma_s^3 \sqrt{n}} (1-y^2) \phi(y) \Big]
    - \frac{ b_{s, \varphi}(f,v) }{ \sigma_s \sqrt{n} } \phi(y)    
     +  \pi_{s,f}(\varphi)  o \Big( \frac{ 1 }{\sqrt{n}} \Big)  +  \lVert \varphi \rVert_{\gamma} O \Big( \frac{ 1 }{n}\Big), 
\end{align}
where $b_{s, \varphi}(f,v)$ is defined in \eqref{Def-bsvarphi-fv}. 
Applying \eqref{Thm_Edgeworth_Target_02},  we have, 
as $n \to \infty$, 
uniformly in $s \in J^+$, $f \in \mathcal{S}$, $v \in \mathcal{S}_{\epsilon}$ and $\varphi \in \mathscr B_{\gamma}$, 
\begin{align}\label{LD-Decom-b}
F_n\left( \frac{y}{a_n} \right) - F_n(0) = I_1(y) + I_2(y) + I_3(y) + I_4(y), 
\end{align}
where 
\begin{align*}
I_1(y) & = \pi_{s,f}(\varphi  r_{s,f}^{-1}) \left[  \Phi\left( \frac{y}{a_n} \right) - \Phi(0)  \right],   \notag\\
I_2(y) & = \pi_{s,f}(\varphi  r_{s,f}^{-1})   \frac{\Lambda'''(s)}{ 6 \sigma_s^3 \sqrt{n}}
   \left\{ \left[ 1- \left( \frac{y}{a_n} \right)^2  \right] \phi\left( \frac{y}{a_n} \right)  -  \phi(0) \right\},      \notag\\
I_3(y) & =  \frac{ b_{s, \varphi r_{s,f}^{-1}}(f,v) }{ \sigma_s \sqrt{n} }  \left[  \phi(0)  -  \phi\left( \frac{y}{a_n} \right) \right],   \notag\\
I_4(y) & = \pi_{s,f}(\varphi  r_{s,f}^{-1})  o \Big( \frac{ 1 }{\sqrt{n}} \Big)  
   +  \lVert \varphi  r_{s,f}^{-1} \rVert_{\gamma} O \Big( \frac{ 1 }{n}\Big). 
\end{align*}
For $I_1(y)$ and $I_2(y)$,  by Taylor's formula and Lebesgue's dominated convergence theorem, we get that 
as $n \to \infty$, 
uniformly in $s \in J^+$, $f \in \mathcal{S}$ and $\varphi \in \mathscr B_{\gamma}$, 
\begin{align}\label{LD-I1y}
\sqrt{2\pi}  a_n   \int_0^{\infty}   e^{- y}  I_1(y)  dy
& = \sqrt{2\pi}  \pi_{s,f}(\varphi  r_{s,f}^{-1})  \phi(0)  \int_0^{\infty}   e^{- y}  y  dy 
      +  \|\varphi\|_{\infty} O \left( \frac{1}{\sqrt{n}} \right)  \notag\\
& =  \pi_{s,f}(\varphi  r_{s,f}^{-1}) + \|\varphi\|_{\infty} O \left( \frac{1}{\sqrt{n}} \right)
\end{align}
and
\begin{align}\label{LD-I2y}
\sqrt{2\pi}  a_n   \int_0^{\infty}   e^{- y}  I_2(y)  dy
 =   \|\varphi\|_{\infty} O \left( \frac{1}{\sqrt{n}} \right). 
\end{align}
For $I_3(y)$, using Lemma \ref{Lem-Bs} and Taylor's formula, we get that 
as $n \to \infty$, 
uniformly in $s \in J^+$, $f \in \mathcal{S}$ and $\varphi \in \mathscr B_{\gamma}$, 
\begin{align}\label{LD-I3y}
\sqrt{2\pi}  a_n   \int_0^{\infty}   e^{- y}  I_3(y)  dy
 =   \|\varphi\|_{\gamma} O \left( \frac{1}{\sqrt{n}} \right). 
\end{align}
For $I_4(y)$, it is easy to see that as $n \to \infty$, 
uniformly in $s \in J^+$, $f \in \mathcal{S}$ and $\varphi \in \mathscr B_{\gamma}$, 
\begin{align}\label{LD-I4y}
\sqrt{2\pi}  a_n   \int_0^{\infty}   e^{- y}  I_4(y)  dy
= \pi_{s,f}(\varphi  r_{s,f}^{-1}) o(1)  +  \|\varphi\|_{\gamma} O \left( \frac{1}{\sqrt{n}} \right). 
\end{align}
Combining \eqref{LD-Decom-a}, \eqref{LD-Decom-b}, \eqref{LD-I1y}, \eqref{LD-I2y}, \eqref{LD-I3y} and \eqref{LD-I4y}
concludes the proof of \eqref{BahaRao_Version_Coeff}. 
\end{proof}


Using Proposition \ref{Thm_Cocycle_01}, we now prove Theorem \ref{Thm-Petrov-Uppertail}. 

\begin{proof}[Proof of Theorem \ref{Thm-Petrov-Uppertail}]
We only prove \eqref{Petrov-Upper} using \eqref{BahaRao_Version_Coeff}, since the proof of \eqref{Petrov-Lower}
can be similarly carried out using \eqref{BahaRao_Version_Coeff-Neg}. 
Let  $\epsilon \in (0,1)$ be from \eqref{equicon A4} of Lemma \ref{lem equiv Kesten}. 
It holds that $\langle f, v \rangle \geq \epsilon$ 
for any $f \in \mathcal{S}$ and $v \in \mathcal{S}_\epsilon$, and $g \cdot v \in \mathcal{S}_\epsilon$
for any $g \in \Gamma_{\mu}$ and $v \in \mathcal{S}$.
For $n \geq 2$, $v' \in \mathcal{S}_{\epsilon}$ and $l' \in \mathbb{R}$, we denote  
\begin{align} \label{ScalDefPn}
P_{n}(v', l')= \mathbb{E}
\Big[  \varphi(g_n \ldots g_2 \cdot v') 
    \mathds 1_{ \big\{ \log \frac{\langle f, g_n \ldots g_2 v' \rangle}{\langle f, v' \rangle} \geq (n-1)(q + l')  \big\} }  \Big].
\end{align}
Let
\begin{align*}
\ell_{f,v} (g) =  \frac{q - \log \langle f, g v \rangle}{n-1}. 
\end{align*}
Since for any fixed $g \in \supp \mu$, 
\begin{align} \label{ScalCondi 01a}
\mathbb{E} \Big[  \varphi(g_n \ldots g_2 g \cdot v) 
    \mathds 1_{ \big\{  \log \langle f, g_n \ldots g_2 gv \rangle \geq nq   \big\}  }  \Big]
=  P_{n} \left(g \cdot v, \ell_{f,v}(g) \right),
\end{align}
taking conditional expectation with respect to $g_1$, we have
\begin{align} \label{ScalCondi 01}
\mathbb{E} \Big[  \varphi(g_n \ldots g_1 \cdot v) 
    \mathds 1_{ \big\{  \log \langle f, g_n \ldots g_1 v \rangle \geq nq  \big\}  }  \Big]
 = \mathbb E P_{n} \left( g_1 \cdot v,  \ell_{f,v} (g_1) \right). 
\end{align}
Note that Proposition \ref{Thm_Cocycle_01} holds uniformly in $s$, allowing us to consider a vanishing perturbation $\ell_{f,v} (g)$ on $q$. 
Specifically, for any fixed $g \in \supp\mu$,
applying Proposition \ref{Thm_Cocycle_01} to $P_{n}(g_1 \cdot v,  \ell_{f,v} (g) )$ 
and using the fact that $\pi_{s,f}(\varphi  r_{s,f}^{-1}) = \nu_{s,f}(\varphi)$, 
we obtain that, as $n \to \infty,$ 
uniformly in $s \in J^+$, $f \in \mathcal{S},$ $v \in \mathcal{S}$ and $\varphi \in \mathscr B_{\gamma}$, 
\begin{align*}
P_{n} \left(g \cdot v, \ell_{f,v}(g) \right)
 =  e^{- (n-1) \Lambda^{*}(q + \ell_{f,v}(g))}
    \frac{ r_{s, f}(g \cdot v) }{ s\sigma_{s}\sqrt{2\pi (n-1)} }   
   \big[ \nu_{s,f}(\varphi) + \|\varphi\|_{\gamma}  o(1)  \big]. 
\end{align*}
As in \cite[Lemma 4.1]{XGL19a}, 
for any $s \in (I_{\mu}^+ \cup  I_{\mu}^-)^{\circ}$ and $q = \Lambda'(s)$, 
the rate function $\Lambda^*(q+l)$ has the following expansion with respect to $l$ in a small neighborhood of $0$: 
\begin{align} \label{Def Jsl}
\Lambda^*(q+l)
   = \Lambda^{*}(q) + sl + \frac{l^2}{2 \sigma_s^2} - \frac{l^3}{\sigma_s^3} \zeta_s\Big(\frac{l}{\sigma_s}\Big), 
\end{align}
where $\zeta_s(t)$ is called the Cram\'{e}r series (cf.\ \cite{Pet75, XGL19b}) of $\Lambda(s)$ given by
\begin{align*}
\zeta_s (t) = \frac{\gamma_{s,3} }{ 6 \gamma_{s,2}^{3/2} }  
  +  \frac{ \gamma_{s,4} \gamma_{s,2} - 3 \gamma_{s,3}^2 }{ 24 \gamma_{s,2}^3 } t
  +  \frac{\gamma_{s,5} \gamma_{s,2}^2 - 10 \gamma_{s,4} \gamma_{s,3} \gamma_{s,2} + 15 \gamma_{s,3}^3 }{ 120 \gamma_{s,2}^{9/2} } t^2
  +  \ldots,
\end{align*}
with $\gamma_{s,k} = \Lambda^{(k)} (s)$.
Since $\ell_{f,v}(g) \to 0$ as $n \to \infty$,  and $\Lambda^*(q) = sq - \Lambda(s)$, 
we get that, for any $g \in \supp \mu$, as $n \to \infty$, 
uniformly in $s \in J^+$ and $f, v  \in \mathcal{S}$,  
\begin{align*}
e^{-(n-1) \Lambda^*(q + \ell_{f,v}(g))} 
= e^{-n \Lambda^*(q) - \Lambda(s) + s \log \langle f, gv \rangle} [1 + o(1)]. 
\end{align*}
Together with the previous display, 
this implies
\begin{align*}
P_{n} \left( g \cdot v, \ell_{f,v}(g) \right)
& =  \frac{ e^{- n \Lambda^{*}(q)} }{s\sigma_{s} \sqrt{2\pi n}}
    \frac{ \langle f, gv \rangle^s  r_{s, f}(g \cdot v) }{\kappa(s)}  
  \big[ \nu_{s,f}(\varphi) + \|\varphi\|_{\gamma}  o(1)  \big]
    \notag\\
& = \frac{ e^{- n \Lambda^{*}(q)} }{s\sigma_{s} \sqrt{2\pi n}}
     \frac{ \|gv\|^s r_s(g \cdot v) }{\kappa(s)}  
   \big[ \nu_{s,f}(\varphi) + \|\varphi\|_{\gamma}  o(1)  \big],
\end{align*}
where in the last equality we used the definition of $r_{s,f}$ (see Proposition \ref{change of measure coeffi 01}). 
Denote
\begin{align*} 
I_n(g) = s \sigma_{s} \sqrt{2\pi n}  \,  e^{n\Lambda^{*}(q) }  P_{n} \left( g \cdot v, \ell_{f,v}(g) \right). 
\end{align*}
To apply the Lebesgue dominated convergence theorem, 
we shall find an integrable dominating function for the sequence $(I_n(g))_{n\geq 1}.$ 
Since $\log \langle f, g_n \ldots g_2 g v \rangle \leq  \log\| g_n \ldots g_2 g v \|$, 
we use \eqref{ScalCondi 01a} and \cite[Theorem 2.1]{XGL19a} to get that 
there exist constants $c,c',c'' >0$ such that for any $g \in \Gamma_{\mu}$, 
$s \in J^+$, $f, v \in \mathcal{S}$ and $n \geq 1$, with $q = \Lambda'(s)$ and $q' = \frac{nq}{n-1}$, 
\begin{align} \label{ScalDomiFunc}
I_n(g) & \leq  c \|\varphi\|_{\infty}  \sqrt{n}  \,  e^{n\Lambda^{*}(q) }  
   \bb P \left( \log \| g_n \ldots g_2 gv \| \geq nq \right)  \notag\\
&  \leq  c' \|\varphi\|_{\infty}  \sqrt{n-1}  \,  e^{(n-1)\Lambda^{*}(q) }  
   \bb P \left( \log \| g_n \ldots g_2 gv \| \geq (n-1) q' \right)  \notag\\
& \leq c'' \|\varphi\|_{\infty}  r_s(gv), 
\end{align}
where $r_s$ is defined by Proposition \ref{change-measure-neg01} (the definition of $r_s$ can be extended to $\bb R_+^d \setminus \{0\}$). 
Since  
\begin{align*}
\int_{\bb G} r_s(gv) \mu(dg) = \int_{\bb G} \|gv\|^s r_s(g \cdot v) \mu(dg)
\leq c  \int_{\bb G} \|g\|^s  \mu(dg) < \infty 
\end{align*}
and 
\begin{align*}
\frac{ 1 }{\kappa(s)}  \int_{\bb G}  \|gv\|^s r_s(g \cdot v) \mu(dg) 
=  \frac{ 1 }{\kappa(s)} P_s r_s(v) =  \frac{ 1 }{\kappa(s)} \kappa(s) r_s(v) = r_s(v), 
\end{align*}
we are allowed to apply the Lebesgue dominated convergence theorem to obtain
that, as $n\to \infty$, uniformly in $s \in J^+$, $f, v  \in \mathcal{S}$ and $\varphi \in \mathscr B_{\gamma}$, 
\begin{align} \label{Scallimit 01}
 \mathbb{E} \Big[  \varphi(g_n \ldots g_1 \cdot v) 
    \mathds 1_{ \big\{  \log \langle f, g_n \ldots g_1 v \rangle \geq nq  \big\}  }  \Big]   
 = \frac{ e^{- n \Lambda^{*}(q)} }{s\sigma_{s} \sqrt{2\pi n}}
  r_s(v)  \big[ \nu_{s,f}(\varphi) + \|\varphi\|_{\gamma}  o(1)  \big].  
\end{align}
Since $\nu_{s,f}(\varphi) = \frac{\nu_s(\langle f, \cdot \rangle^s \varphi)}{\nu_s(r_s)}$ (cf.\ \eqref{eigenfunc_Psf}), 
from \eqref{Scallimit 01} it follows that, as $n \to \infty$, 
uniformly in $s \in J^+$, $f, v  \in \mathcal S$ and $\varphi \in \scr B_{\gamma}$, 
\begin{align}\label{Petrov-Upper-aaa}
& \mathbb{E} \left[ \varphi(G_n \cdot v) \mathds 1_{ \left\{ \log \langle f,  G_n v \rangle \geq nq \right\} }  \right]  
   \notag\\
& = \frac{r_s(v)}{\nu_s(r_s)} \frac{ \exp(- n\Lambda^{*}(q)) }{ s\sigma_{s}\sqrt{2\pi n} }  
    \left[ \int_{\mathcal{S}} \varphi(u) \langle f, u \rangle^s \nu_s(du) + \| \varphi \|_{\gamma} o(1) \right]. 
\end{align}
Similarly to \eqref{Notation-fv-yn}, we denote 
\begin{align*}
\bar f = \frac{f}{\|f\|},  \quad  \bar v = \frac{v}{\|v\|},  \quad l' = - \frac{\log (\|f\| \|v\|)}{n}. 
\end{align*}
We use \eqref{Petrov-Upper-aaa} to get that uniformly in $s \in J^+$, $f, v  \in K$
 and $\varphi \in \scr B_{\gamma}$, 
\begin{align*}
& \mathbb{E} \left[ \varphi(G_n \cdot v) \mathds 1_{ \left\{ \log \langle f,  G_n v \rangle \geq n (q+l) \right\} }  \right]  
  =  \mathbb{E} \left[ \varphi(G_n \cdot \bar v) \mathds 1_{ \left\{ \log \langle \bar f,  G_n \bar v \rangle \geq n (q+l') \right\} }  \right]  
   \notag\\
& = \frac{r_s(\bar v)}{\nu_s(r_s)} \frac{ \exp(- n\Lambda^{*}(q+l')) }{ s\sigma_{s}\sqrt{2\pi n} }  
    \left[ \int_{\mathcal{S}} \varphi(u) \langle \bar f, u \rangle^s \nu_s(du) + \| \varphi \|_{\gamma} o(1) \right]. 
\end{align*}
By \eqref{Def Jsl}, it holds uniformly in $s \in J^+$ and $f, v  \in K$, 
\begin{align*}
\Lambda^*(q+l')
   = \Lambda^{*}(q) - s \frac{\log (\|f\| \|v\|)}{n} + o \left( \frac{1}{n} \right), 
\end{align*}
so that 
\begin{align*}
e^{- n\Lambda^{*}(q+l')} = \|f\|^s \|v\|^s  e^{- n\Lambda^{*}(q)}  [1 + o(1)]. 
\end{align*}
Since 
\begin{align*}
\|v\|^s  r_s(\bar v) = r_s(v)  \quad \mbox{and}  \quad 
\|f\|^s \int_{\mathcal{S}} \varphi(u) \langle \bar f, u \rangle^s \nu_s(du) = \int_{\mathcal{S}} \varphi(u) \langle f, u \rangle^s \nu_s(du), 
\end{align*}
we conclude the proof of \eqref{Petrov-Upper}. 
\end{proof}

\section{Proof of Theorems \ref{Thm_LLT_03}, \ref{Thm-Petrov-Norm-cocycle} and \ref{Thm_LDSpectPosi}}

\subsection{Proof of Theorem \ref{Thm_LLT_03}}
We first establish \eqref{LLT-001}. 
It suffices to prove that \eqref{LLT-001} holds uniformly in $f, v  \in \mathcal S$, 
$|l| = o(\frac{1}{\sqrt{n}})$ and $\varphi \in \scr B_{1}$.  
By Lemma \ref{lem equiv Kesten}, 
condition \ref{Condi-FK-weak} implies that $g \cdot v \in \mathcal{S}_\epsilon$
for any $g \in \Gamma_{\mu}$ and $v \in \mathcal{S}$.  
For any $f, v \in \mathcal S$, $|l| = o(\frac{1}{\sqrt{n}})$, $\varphi \in \scr B_{1}$, $n \geq 2$ 
 and any fixed $g \in \Gamma_{\mu}$,  it holds that
\begin{align} \label{LLT_ScalCondi_01}
\bb E \Big[  \varphi( (g_n \ldots g_2 g) \cdot v ) 
      \mathds 1_{ \left\{ 
     \log \langle f, g_n \ldots g_2 g v \rangle - n \lambda_{\mu} \in [a_1, a_2] + \sigma n l
     \right\} }
         \Big]  
=  P_{n} \left(g \cdot v, \ell_{f,v}(g) \right),
\end{align}
where 
\begin{align*}
\ell_{f,v} (g) = \frac{n l}{n-1}  +  \frac{ \lambda_{\mu} - \log \langle f, g v \rangle}{\sigma(n-1)} 
\end{align*}
and, with $f \in \mathcal S$, $v' \in \mathcal{S}_{\epsilon}$ and $l' \in \bb R$, 
\begin{align} \label{LLT_ScalDefPn}
P_{n}(v', l')= \mathbb{E} 
\Big[  \varphi( (g_n \ldots g_2) \!\cdot\! v') 
      \mathds 1_{ \left\{ 
      \log \frac{\langle f, g_n \ldots g_2 v' \rangle}{\langle f, v' \rangle} - (n-1) \lambda_{\mu} \in [a_1, a_2] + \sigma (n-1) l'
       \right\} }
         \Big].
\end{align}
Taking conditional expectation with respect to $g_1$, we have
\begin{align} \label{LLT_ScalCondi_02}
\mathbb{E} \left[ \varphi(G_n \!\cdot\! v) 
      \mathds 1_{ \left\{ \log \langle f,  G_n v \rangle - n \lambda_{\mu} \in   [a_1, a_2] + \sigma n l \right\} }  \right] 
= \mathbb E P_{n} \left( g_1 \cdot v,  \ell_{f,v} (g_1) \right). 
\end{align}
Under \ref{Condi-FK-weak}, \ref{Condi-NonLattice} 
and $\int_{\bb G} \log^2 N(g) \mu(dg) < \infty$, 
one can apply Lemma \ref{transfer operator_Pit-f} and follow the proof of \cite[Theorem 2.4]{XGL19b} to show that, as $n \to \infty$, 
uniformly in $f \in \mathcal{S}$, $v \in \mathcal{S}_{\epsilon}$, $|l| = O(\frac{1}{\sqrt{n}})$ and $\varphi \in \scr B_{1}$, 
\begin{align}\label{Prop_LLT_01}
\mathbb{E} \left[ \varphi(G_n \!\cdot\! v) 
      \mathds 1_{ \left\{
       \log \frac{\langle f, G_n v \rangle}{\langle f, v \rangle} - n \lambda_{\mu}  \in [a_1, a_2] + \sigma nl 
       \right\} }
       \right]
= \frac{ a_2 - a_1 }{\sigma \sqrt{2\pi n} } e^{- \frac{n l^2}{2}} 
   \big[ \nu(\varphi)  + \| \varphi \|_{\gamma}  o(1) \big].
\end{align}
By \eqref{Prop_LLT_01}, we get that for any fixed $g \in \Gamma_{\mu}$, 
as $n \to \infty$, uniformly in $f, v  \in \mathcal{S}$, $|l| = o(\frac{1}{\sqrt{n}})$ and $\varphi \in \scr B_{1}$, 
\begin{align}\label{LLT-Pn-g}
P_{n} \left( g \cdot v,  \ell_{f,v} (g) \right)
= \frac{ a_2 - a_1 }{\sigma \sqrt{2\pi (n-1)} } e^{- \frac{n-1}{2} \ell^2_{f,v}(g) } \big[ \nu(\varphi)  + \| \varphi \|_{\gamma}  o(1) \big].
\end{align}
Note that $\frac{1}{\sqrt{n-1}} = \frac{1}{\sqrt{n}} [1 + O(\frac{1}{n})]$. 
Using the inequality $1 \geq e^{-x} \geq 1 - x$ for $x \geq 0$
and the fact that $|\log \langle f, gv \rangle| \leq \log N(g)$ and $\int_{\bb G} \log^2 N(g) \mu(dg) < \infty$, 
we get that as $n \to \infty$, 
uniformly in $f, v  \in \mathcal{S}$ and $|l| = o(\frac{1}{\sqrt{n}})$, 
\begin{align}\label{Bound-Inte-lg}
1 \geq \int_{\bb G} e^{- \frac{n-1}{2} \ell^2_{f,v}(g) } \mu(dg)
& =  \int_{\bb G}  e^{ - \frac{( \sigma n l + \lambda_{\mu} - \log \langle f, gv \rangle)^2}{2 \sigma^2 (n-1)}} \mu(dg)  \notag\\
& \geq  1 -  \int_{\bb G} \frac{( \sigma n l + \lambda_{\mu} - \log \langle f, gv \rangle)^2}{2 \sigma^2 (n-1)} \mu(dg)
 = 1 - o(1). 
\end{align}
In a similar way as in the proof of \cite[Theorem 2.4]{XGL19b}, 
one can apply Lemma \ref{transfer operator_Pit-f} to show the following uniform upper bound:  
under \ref{Condi-FK-weak}, \ref{Condi-NonLattice} and $\int_{\bb G} \log^2 N(g) \mu(dg) < \infty$, 
there exists a constant $c > 0$ such that for any $f \in \mathcal{S}$, $v \in \mathcal{S}_{\epsilon}$ and $n \geq 1$,  
\begin{align}\label{Prop_LLT_UpperBound}
\sup_{l \in \bb R}
\mathbb{E} \left[ \varphi(G_n \!\cdot\! v) 
      \mathds 1_{ \left\{ \log \frac{\langle f, G_n v \rangle}{\langle f, v \rangle} - n \lambda_{\mu} \in [a_1, a_2] + l  
      \right\}  }
      \right]
\leq  c \|\varphi\|_{\infty} \frac{ a_2 - a_1 }{\sqrt{n} }.
\end{align}
By \eqref{Prop_LLT_UpperBound}, 
the dominating function for $P_{n} \left(g \cdot v, \ell_{f,v}(g) \right)$ is given as follows: 
\begin{align*}
\sup_{g \in \Gamma_{\mu}} P_{n} \left(g \cdot v, \ell_{f,v}(g) \right)
\leq c \frac{ a_2 - a_1 }{\sqrt{n} }. 
\end{align*}
Therefore, from \eqref{LLT_ScalCondi_02},  using \eqref{LLT-Pn-g}, \eqref{Bound-Inte-lg} and the Lebesgue dominated convergence theorem, 
we obtain \eqref{LLT-001}. 

The proof of \eqref{LLT-MD} can be conducted similarly to that of \eqref{LLT-001},
so we only outline the main differences. 
In a manner analogous to \eqref{Prop_LLT_01} and \eqref{Prop_LLT_UpperBound}, 
under \ref{Condi-FK-weak}, \ref{Condi-NonLattice} 
and $\int_{\bb G} N(g)^{\eta} \mu(dg) < \infty$ for some constant $\eta>0$, 
one can apply Propositions \ref{perturbation thm 02-f}, \ref{Sca expdecay pertur-f} 
and follow the proof of \cite[Theorem 2.4]{XGL19b} to show that, as $n \to \infty$, 
uniformly in $f \in \mathcal{S}$, $v \in \mathcal{S}_{\epsilon}$, $|l| \leq l_n$ and $\varphi \in \scr B_{1}$, 
\begin{align*}
\mathbb{E} \left[ \varphi(G_n \!\cdot\! v) 
      \mathds 1_{ \left\{
       \log \frac{\langle f, G_n v \rangle}{\langle f, v \rangle} - n \lambda_{\mu} \in [a_1, a_2] + \sigma nl 
       \right\} }
       \right]
= \frac{ a_2 - a_1 }{\sigma \sqrt{2\pi n} }   e^{ -\frac{nl^2}{2} +   n l^3 \zeta (l) }
   \big[ \nu(\varphi)  + \| \varphi \|_{\gamma}  o(1) \big], 
\end{align*}
and there exist constants $c, \eta > 0$ such that for any $s \in (-\eta, \eta)$, 
$f \in \mathcal{S}$, $v \in \mathcal{S}_{\epsilon}$, $\varphi \in \scr B_{1}$ and $n \geq 1$,  
\begin{align*}
\sup_{l \in \bb R}
\bb E_{\bb Q_{s,f}^v} 
 \left[
 \varphi(G_n \!\cdot\! v) 
      \mathds 1_{ \left\{
 \log \frac{\langle f, G_n v \rangle}{\langle f, v \rangle} - n \Lambda'(s) \in [a_1, a_2] + l 
     \right\} }
 \right]
\leq  c \|\varphi\|_{\infty} \frac{ a_2 - a_1 }{\sqrt{n} }.
\end{align*}

The asymptotic \eqref{LLT-LD} is a consequence of Theorem \ref{Thm-Petrov-Uppertail} 
by following the proof of \cite[Theorem 2.5]{XGL19d}.

\subsection{Proof of Theorems \ref{Thm-Petrov-Norm-cocycle} and \ref{Thm_LDSpectPosi}}
Since $\|G_n v\| = \langle \mathbf 1, G_n v \rangle$ for any $v \in \bb R_+^d$ and $\langle \mathbf 1, u \rangle = 1$ for any $u \in \mathcal S$,  
Theorem \ref{Thm-Petrov-Norm-cocycle} follows from \eqref{Petrov-Lower} of Theorem \ref{Thm-Petrov-Uppertail} 
by taking $f=\mathbf 1$. 

Next we prove Theorem \ref{Thm_LDSpectPosi}. 
We only give a proof of \eqref{LD_Spec01} since the proof of \eqref{LD_Spec02} can be carried out in a similar way. 
Since $\rho(G_n) \leq \| G_n \|$, the upper bound in \eqref{LD_Spec01}
follows from \eqref{LD-BRP-MatrixNorm}: 
there exists a constant $C < \infty$ such that for any $s \in J^+$, with $q = \Lambda'(s)$,
\begin{align}\label{Pf_LD_Spec_000}
\limsup_{n\to \infty} 
\sqrt{n}  \,  e^{ n \Lambda^*(q) }  
\mathbb{P}  \big(\log \rho(G_n)  \geq nq \big)
<  C. 
\end{align}
For the lower bound in \eqref{LD_Spec01}, 
by the Collatz-Wielandt formula, we have for any $g \in \Gamma_{\mu}$, 
\begin{align}\label{Formu_Collatz_Wielandt}
\rho(g) = \sup_{ v \in \mathcal{S} } 
   \min_{ 1 \leq i \leq d, \,  \langle  e_i, v \rangle > 0 } 
     \frac{ \langle e_i, gv \rangle }{ \langle e_i, v \rangle}. 
\end{align}
Taking $x = e_1$, we get $\rho(G_n) \geq  G_n^{1,1}$. 
Using \eqref{Petrov-Upper} of Theorem \ref{Thm-Petrov-Uppertail} with $f = v = e_1$, 
we obtain that there exists a constant $c>0$ such that for any $s \in J^+$, with $q = \Lambda'(s)$,
\begin{align}\label{Pf_LD_Spec_001}
  \liminf_{n\to \infty} 
   \sqrt{n}  \,  e^{ n \Lambda^*(q) }  
\mathbb{P}  \big(\log \rho(G_n)  \geq nq \big) > c, 
\end{align}
which, together with \eqref{Pf_LD_Spec_000}, concludes the proof of \eqref{LD_Spec01}. 
The proof of Theorem \ref{Thm_LDSpectPosi} is complete.

\section*{Declarations}

\textbf{Data Availability}
Data sharing is not applicable to this article as no datasets were generated or analyzed
during the current study.

\textbf{Conflict of interest} The authors have no competing interests to declare that are relevant to the content of
this article.


\end{document}